\theoremstyle{plain}
\newtheorem{theorem}{Theorem}[section]
\newtheorem{lemma}[theorem]{Lemma}
\newtheorem{proposition}[theorem]{Proposition}
\newtheorem{corollary}[theorem]{Corollary}
\theoremstyle{definition}
\newtheorem{definition}[theorem]{Definition}
\newtheorem{example}[theorem]{Example}
\theoremstyle{remark}
\newtheorem{remark}[theorem]{Remark}
\newtheorem{observations}[theorem]{Observations}
\newcommand{\Rom}[1]{\uppercase\expandafter{\romannumeral#1}}
\newcommand{\mc}{\mathcal}
\newcommand{\ms}{\mathscr}
\newcommand{\E}{\mathcal E}
\renewcommand{\H}{\mathcal H}
\newcommand{\CC}{\mathbb C}
\newcommand{\DD}{\mathbb D}
\newcommand{\LL}{\mathbb L}
\newcommand{\QQ}{\mathbb Q}
\newcommand{\RR}{\mathbb R}
\newcommand{\ZZ}{\mathbb Z}
\DeclareMathOperator{\im}{Im}
\DeclareMathOperator{\coker}{coker}
\DeclareMathOperator{\Hom}{Hom}
\DeclareMathOperator{\sHom}{\mathscr{H}\text{\kern -3pt {\calligra\large om}}\,}
\newcommand{\RHom}{{\bf R} \H om}
\DeclareMathOperator{\Ext}{Ext}
\newcommand{\sExt}{\E xt} 
\newcommand{\DB}{\underline{\Omega}} 
\newcommand{\dt}{\otimes^{\LL}} 
\DeclareMathOperator{\Cone}{Cone}
\DeclareMathOperator{\sing}{sing}
\DeclareMathOperator{\supp}{supp}
\DeclareMathOperator{\Ch}{ch}
\DeclareMathOperator{\Coh}{Coh}
\DeclareMathOperator{\O0}{\DB_X^0}
\DeclareMathOperator{\Length}{length}
\DeclareMathOperator{\lcm}{lcm}
\DeclarePairedDelimiter\abs{\lvert}{\rvert}
\let\oldabs\abs
\def\abs{\@ifstar{\oldabs}{\oldabs*}}
\theoremstyle{definition} 
\newcommand{\thistheoremname}{}
\newtheorem*{genericthm}{\thistheoremname}
\newcommand{\Addresses}{{
  \vspace{\bigskipamount}
  \footnotesize

  \textsc{Department of Mathematics, MIT, 77 Massachusetts Avenue, Cambridge, MA 02139, USA}\par\nopagebreak
  \textit{E-mail address}: 
  \texttt{annelars@mit.edu}
  
  \vspace{\bigskipamount}
  
  \textsc{Department of Mathematics, Harvard University, 1 Oxford Street, Cambridge, MA 02138, USA}\par\nopagebreak
  \textit{E-mail address}: \texttt{astenie@math.harvard.edu}
}}
\title{Reider-type theorems on normal surfaces via Bridgeland stability}
\author{Anne Larsen}
\author{Anda Tenie}
\thanks{A.L. was partially supported by the National Science Foundation Graduate Research Fellowship under Grant No. 2141064.}
\begin{document}

\maketitle

\begin{abstract}
Using Langer's construction of Bridgeland stability conditions on normal surfaces, we prove Reider-type theorems generalizing the work done by Arcara-Bertram in the smooth case. 
Our results still hold in positive characteristic or when $\omega_X\otimes L$ is not necessarily a line bundle. They also hold when the dualizing sheaf is replaced by a variant arising from the theory of Du Bois complexes. For complex surfaces with at most rational double point singularities, we recover the optimal bounds for global generation and very ampleness as predicted by Fujita's conjecture.

\end{abstract}

\section{Introduction}In \cite{fujita1988contribution} Fujita proposed a fundamental conjecture in algebraic geometry stating that if $L$ is an ample line bundle on a smooth projective complex variety $X$ of dimension $n$, then $\omega_X\otimes L^{\otimes n+1}$ is globally generated and $\omega_X\otimes L^{\otimes n+2}$ is very ample. The conjecture has seen much interest over the years and the global generation part is now known in dimension $\leq 5$ (\cite{reider1988vector}, \cite{ein1993global}, \cite{kawamata1995fujita}, \cite{ye2020fujita}).

One strategy that has seen much success (including in giving effective bounds for higher dimensional varieties) involves vanishing theorems. Another approach, which is, however, hard to generalize in higher dimensions, is Reider's strategy of using vector bundle techniques and Bogomolov's inequality. We will focus on the latter approach in the context of normal surfaces.
Our main source of inspiration is Arcara and Bertram's reinterpretation of Reider's theorem in the context of Bridgeland stability conditions \cite{arcara2009reider}.

Our work is concerned with giving Reider-type (and generation of higher jets of $\omega_X\otimes L^{\otimes a}$) bounds on normal surfaces.  For this generalization to singular surfaces, another key ingredient used in our work is Langer's recent construction of stability conditions on normal surfaces \cite{langer2024bridgeland}. 

\begin{remark}
Other Reider-type bounds for normal surfaces under the assumption that $K_X+H$ is Cartier can be found in \cite{sakai2006reider}, \cite{ein1995global}.  For generation of higher jets results when $X$ has in addition at most rational singularities one can consult \cite{langer1998note}.
\end{remark}

We now discuss the main results of this article.
Let $X$ be a projective normal surface defined over an algebraically closed field (of any characteristic), $Z \subset X$ a dimension 0 subscheme of length $l_Z$, and let $L$ be an ample line bundle on $X$. The constant $C_X$ will be defined in Section \ref{sec:preliminaries} and in characteristic 0, depends only on the singularities of $X$; roughly speaking, $C_X$ ensures a Bogomolov inequality holds on $X.$  For concrete examples where this constant is computed see Section \ref{sec: cx}.

\begin{theorem}\label{thm :VanishingOmega}
Let $X$ be a projective normal surface defined over an algebraically closed field, $Z \subset X$ a zero-dimensional subscheme of length $l_Z$, and $L$ an ample line bundle on $X$. Suppose there is a pair of nonnegative integers $l_1 + l_2 = l_Z$ such that
$L^2 \ge \max( (C_X + 2l_2+1)^2, 4(l_1+l_2+C_X)+\epsilon) $
and $L \cdot C \ge \max(2(2l_1 + C_X), C_X + 2l_2 + 1)$ for any curve $C$.
Then we have the vanishing
$$H^1(X, \omega_X \dt L \otimes I_Z) = 0.$$ In particular, this implies $\omega_X \otimes L$ separates jets along $Z,$ i.e. 
$$H^0(\omega_X\otimes L)\rightarrow H^0(\omega_X\otimes L\otimes \mc O_Z)$$ is surjective.
\end{theorem}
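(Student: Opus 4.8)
The plan is to follow the Arcara–Bertram strategy, now transplanted to Langer's stability conditions on the normal surface $X$. The key object is the extension class. If $H^1(X,\omega_X \dt L \otimes I_Z) \neq 0$, then by Serre-type duality on $X$ (using that $\omega_X$ is the dualizing complex in the appropriate derived sense, which is why the statement is phrased with $\dt$) this group is dual to a nonzero $\Hom$ or $\Ext^1$ group, producing a nonsplit extension
\begin{equation*}
0 \to \O_X \to E \to I_Z \otimes L \to 0
\end{equation*}
(up to twist), i.e.\ a nonzero class in $\Ext^1(I_Z \otimes L, \O_X)$. The sheaf $E$ is a rank-$2$ torsion-free sheaf with $c_1(E) = L$ and $\chi$/second Chern class governed by $l_Z$; in fact $c_2(E) = l_Z$ in the appropriate sense. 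The whole point is to play off the numerical data of $E$ against a Bogomolov-type inequality and the wall-crossing picture.

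First I would set up the family of Bridgeland stability conditions $\sigma_{s,t} = (\A_s, Z_{s,t})$ constructed by Langer on $X$, parametrized so that the central charge has the form $Z_{s,t}(\F) = -\int e^{-(s+it)L}\operatorname{ch}(\F)$ (twisted by the correction involving $C_X$ that guarantees the Bogomolov inequality on $X$). For $t \gg 0$ the object $E$ (suitably shifted) is $\sigma_{s,t}$-semistable essentially because large-volume limits see ordinary Gieseker/slope stability and $E$ is slope-semistable once the hypotheses on $L \cdot C$ are in force. As $t$ decreases, either $E$ remains semistable all the way down — in which case the Bogomolov inequality $\operatorname{ch}_1(E)^2 \ge 2\operatorname{ch}_0(E)\operatorname{ch}_2(E)$ (corrected by $C_X$) applied to $E$ forces $L^2 \le 4(l_1+l_2+C_X)$, contradicting the hypothesis $L^2 \ge 4(l_1+l_2+C_X)+\epsilon$ — or $E$ is destabilized at some wall, giving a short exact sequence in the heart with a destabilizing subobject $F \subset E$.

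Next I would analyze the destabilizing sequence $0 \to F \to E \to Q \to 0$. Taking Chern characters, $\operatorname{ch}_1(F)$ is a class of the form $D$ (an effective-type divisor obtained as the image of a sub-line-bundle $\O_X(D) \hookrightarrow E$, possibly after saturating), and the numerical constraint at the wall, together with positivity of the central charge phases, pins down $D^2$, $D \cdot L$, and $\operatorname{ch}_2$ in terms of $l_1, l_2$. Splitting $l_Z = l_1 + l_2$ corresponds to how the length-$l_Z$ scheme $Z$ distributes between the sub and the quotient: $l_1$ points specialize onto the sub-object side and $l_2$ onto the quotient. The inequalities $L \cdot C \ge 2(2l_1+C_X)$ and $L \cdot C \ge C_X + 2l_2+1$ are precisely what is needed to rule out each type of wall: one bounds the contribution of $D$ with $D \cdot L$ small, the other controls the complementary divisor $L - D$. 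Combined with the Hodge index theorem on $X$ (valid on normal surfaces with $\QQ$-Cartier intersection theory) and the refined bound $L^2 \ge (C_X+2l_2+1)^2$, every potential destabilizing wall is excluded, so $E$ cannot be destabilized — contradiction. Hence $H^1(X,\omega_X \dt L \otimes I_Z) = 0$.

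Finally, the "in particular" is the standard cohomological consequence: the long exact sequence associated to $0 \to \omega_X \dt L \otimes I_Z \to \omega_X \otimes L \to \omega_X \otimes L \otimes \O_Z \to 0$ (using that $\omega_X \dt L$ has the expected sheaf cohomology in degrees $0,1$, which holds because $\omega_X$ is Cohen–Macaulay on the normal surface $X$ so the derived tensor contributes nothing extra) shows that the vanishing of $H^1$ of the kernel makes $H^0(\omega_X\otimes L) \to H^0(\omega_X\otimes L\otimes\O_Z)$ surjective. I expect the main obstacle to be the wall-crossing bookkeeping on a \emph{singular} surface: one must check that Langer's stability conditions genuinely satisfy a Bogomolov inequality with the constant $C_X$ for \emph{all} the relevant destabilizing subobjects (not just locally free ones), that the Hodge index theorem and the numerical lattice behave well with the $\QQ$-valued intersection form, and that the large-volume semistability of $E$ survives the passage to the singular setting; the sheaf-theoretic extension and the final cohomology step are comparatively routine.
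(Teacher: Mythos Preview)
Your approach diverges from the paper's and leaves a real gap. The paper does not form the rank-$2$ extension $E$ nor wall-cross for it; instead it works directly with a nonzero morphism $f\colon L \otimes I_Z \to \ms O_X[1]$ inside the heart $\Coh_{1/2}$, shows that $\im f$ must be a torsion sheaf (Proposition~\ref{prop:r=1}), and then tests the surjection $L \otimes I_{Z'} \twoheadrightarrow \im f$ and the injection $\im f \hookrightarrow F'$ against \emph{two different} values $t_1, t_2$ of the stability parameter, matched to $l_1$ and $l_2$ respectively (this works because $d_{s,t}(\im f)$ is independent of $t$ once $\Ch_0(\im f)=0$). Crucially, the split $l_1+l_2=l_Z$ is \emph{not} a sub/quotient distribution of $Z$ at a wall as you describe; it is a redistribution of points between $Z$ and $\mc H^0(F)$ carried out \emph{before} any stability analysis, via Propositions~\ref{prop:length1} and~\ref{prop:length2}, which replace the pair $(Z,\ms O_X[1])$ by a pair $(Z',F')$ with $l_{Z'}=l_1$ and $\Length(\mc H^0(F'))=l_2$ for \emph{any} chosen $l_1,l_2$. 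That freedom is precisely what produces the flexible hypothesis in the theorem, and your sketch has no mechanism for it: in your picture the split would be dictated by whichever wall is hit, not chosen in advance.

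Your assertion that $E$ is slope-semistable ``once the hypotheses on $L\cdot C$ are in force'' is also unjustified: the curve bound $\max(2(2l_1+C_X),\,C_X+2l_2+1)$ is typically much smaller than $H^2/2$, so nothing rules out a sub-line-bundle of $E$ of slope exceeding $\mu(E)=H^2/2$. Indeed at $s=\tfrac12$ the sheaf $E$ need not even lie in the heart, so the large-volume starting point of your wall-crossing is not available without a separate argument. The paper's morphism-based route sidesteps all of this.
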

 
 \begin{remark}
     
 For a ``Reider-type" statement see Theorem \ref{thm: Reider}. This variant will also recover in Corollary \ref{cor: Reidervample} the very ampleness part of Reider's theorem for smooth surfaces when $L$ is assumed to be an ample line bundle.
 
 \end{remark}
 
 \begin{definition}\label{def: m'}
    Define a function $m: \RR_{\ge 0} \times \ZZ_{\ge 0} \to \RR$ by
    $$(C_X, l) \mapsto \min_{\substack{l_1+l_2 = l \\ l_1, l_2 \in \ZZ_{\ge 0}}} \{\max\left(2(2l_1+C_X), C_X + 2l_2+1\right)\}.$$
    Also define
    $$m'(C_X, l) := \begin{cases}
        3 & C_X = 1, l = 0 \\
        m(C_X, l) & \textrm{else}
    \end{cases}$$
\end{definition}

Note, in particular,  that $m'(0, 1)=3$ and $m'(0,2) =4$ which will recover the global generation and very ampleness bounds in Fujita's conjecture for complex surfaces with at worst rational double point singularities.

\begin{corollary}\label{cor: abound}
Let $X$ be a projective normal surface defined over an algebraically closed field, $Z \subset X$ a subscheme of length $l_Z$, and $L$ an ample line bundle on $X$. Then
$$H^1(X, \omega_X \dt L^{\otimes a} \otimes I_Z) = 0$$
for any integer
$ a \ge m'(C_X, l_Z).$ In particular, this implies $\omega_X \otimes L^{\otimes a}$ separates jets along $Z,$ i.e. 
$$H^0(\omega_X \otimes L^{\otimes a})\rightarrow H^0(\omega_X \otimes L^{\otimes a}\otimes \mc O_Z)$$ is surjective.
\end{corollary}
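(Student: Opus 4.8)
The plan is to derive Corollary~\ref{cor: abound} directly from Theorem~\ref{thm :VanishingOmega}, applied with the ample line bundle $L^{\otimes a}$ in place of $L$; the whole content of the argument is checking that the numerical hypotheses there become automatic once $a \ge m'(C_X, l_Z)$. First I would fix a decomposition $l_Z = l_1 + l_2$ with $l_1,l_2 \in \ZZ_{\ge 0}$ attaining the minimum in Definition~\ref{def: m'}, so that $\max(2(2l_1+C_X),\, C_X+2l_2+1) = m(C_X,l_Z)$. Since $L$ is ample on the projective surface $X$, the integers $L\cdot C$ (for any curve $C$) and $L^2$ are positive, hence $\ge 1$; therefore $L^{\otimes a}\cdot C = a\,(L\cdot C) \ge a$ and $(L^{\otimes a})^2 = a^2 L^2 \ge a^2$.

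The curve hypothesis of Theorem~\ref{thm :VanishingOmega} for $L^{\otimes a}$ becomes $a \ge \max(2(2l_1+C_X),\, C_X+2l_2+1) = m(C_X,l_Z)$, which holds since $a \ge m'(C_X,l_Z) \ge m(C_X,l_Z)$ (in the exceptional case $C_X=1,\ l_Z=0$ one even has $a \ge 3 > 2 = m(1,0)$). For the hypothesis on $(L^{\otimes a})^2$ it suffices, using $L^2 \ge 1$, to check $a^2 \ge (C_X+2l_2+1)^2$ and $a^2 > 4(l_1+l_2+C_X)$, the latter with a little room to absorb the fixed $\epsilon$. The first is immediate from $a \ge m(C_X,l_Z) \ge C_X+2l_2+1$. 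For the second I would split into two cases according to which term realizes the maximum defining $m(C_X,l_Z)$; each case produces a linear inequality among $l_1$, $l_2$, $C_X$, and a short computation then shows $\lceil m(C_X,l_Z)\rceil^2 > 4(l_1+l_2+C_X)$ in all cases except $C_X=1,\ l_Z=0$, where the two sides would be equal. Since $a$ is an integer with $a \ge m'(C_X,l_Z)$, and $m'$ is defined to equal $3$ precisely in that borderline case, we get $a \ge \lceil m'(C_X,l_Z)\rceil$ and hence the required strict inequality (in the borderline case $a^2 \ge 9 > 4 = 4(l_1+l_2+C_X)$).

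With both numerical conditions verified, Theorem~\ref{thm :VanishingOmega} yields $H^1(X,\omega_X\dt L^{\otimes a}\otimes I_Z)=0$, and its ``in particular'' clause gives the surjectivity of $H^0(\omega_X\otimes L^{\otimes a}) \to H^0(\omega_X\otimes L^{\otimes a}\otimes \O_Z)$. The only subtle point is the borderline case of the quadratic inequality: one must notice that the naive bound $m(C_X,l_Z)$ falls short by exactly one unit when $C_X=1$ and $l_Z=0$, which is precisely the phenomenon encoded in the definition of $m'$; everything else is routine bookkeeping with Definition~\ref{def: m'} and positivity of intersection numbers of an ample divisor.
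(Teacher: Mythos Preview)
Your proposal is correct and follows essentially the same approach as the paper's proof of Corollary~\ref{cor: fujita} (specialized to $F = \ms O_X[1]$): reduce to the numerical hypotheses of Theorem~\ref{thm :VanishingOmega} using $L^2 \ge 1$ and $L\cdot C \ge 1$, and then isolate $C_X = 1$, $l_Z = 0$ as the unique case where the strict inequality $a^2 > 4(l_Z + C_X)$ forces the bump from $m$ to $m'$. Your treatment of that quadratic bound---splitting on which term in the $\max$ dominates and reducing to $(m-1)(m-2) > 0$---is slightly different from the paper's, which instead uses the cruder estimate $m(C_X,l) \ge C_X + l$ and checks small values of $C_X + l$ by hand, but the overall strategy is the same.
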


\begin{remark}
     Unlike previous work, which gives similar results for a divisor $H$ under the assumption that $K_X + H$ is Cartier, our theorem shows separation of jets for the tensor product of an ample bundle with the canonical sheaf, even when this is not a line bundle.
\end{remark} 

We also recover the classical Fujita conjecture for surfaces:
\begin{corollary}\label{cor: intro cx=0 fujita}
    If $X$ is a projective normal surface with $C_X = 0$ (e.g. a complex surface with at most rational double point singularities) and $L$ an ample line bundle, then $\omega_X \otimes L^{\otimes a}$ is globally generated for $a \ge 3$ and very ample for $a \ge 4$.
\end{corollary}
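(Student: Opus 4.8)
The plan is to reduce everything to Corollary \ref{cor: abound} by feeding it the right zero-dimensional subschemes $Z$ and then translating its conclusion through the standard Nakayama-type criteria for global generation and very ampleness.

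For the global generation statement I would first recall that a coherent sheaf $\mathcal{F}$ on a projective scheme over an algebraically closed field is globally generated precisely when, for every closed point $x$, the evaluation map $H^0(X,\mathcal{F}) \to \mathcal{F}\otimes k(x)$ is surjective. Applying this with $\mathcal{F} = \omega_X \otimes L^{\otimes a}$ and taking $Z$ to be the reduced point $\{x\}$, we have $l_Z = 1$ and $\mathcal{F}\otimes\mathcal{O}_Z = \mathcal{F}\otimes k(x)$; since $C_X = 0$ and $m'(0,1) = 3$ (the value recorded just after Definition \ref{def: m'}), Corollary \ref{cor: abound} gives the required surjectivity for every $x$ as soon as $a \ge 3$. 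Hence $\omega_X \otimes L^{\otimes a}$ is globally generated for $a \ge 3$.

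For very ampleness I would first note that a surface with at worst rational double points is Gorenstein, so $\omega_X$, and therefore $\omega_X \otimes L^{\otimes a}$, is a line bundle and the statement is meaningful. Then I would invoke the classical criterion that a line bundle $M$ on a projective scheme over an algebraically closed field is very ample if and only if the restriction map $H^0(X,M) \to H^0(Z, M|_Z)$ is surjective for every zero-dimensional $Z \subset X$ of length $\le 2$ (this unwinds into separating pairs of distinct points and separating length-two schemes supported at a single point, i.e.\ first-order jets). Since $m'(0,l) \le m'(0,2) = 4$ for every $l \le 2$, Corollary \ref{cor: abound} supplies exactly this surjectivity for all such $Z$ once $a \ge 4$, and so $\omega_X \otimes L^{\otimes a}$ is very ample for $a \ge 4$.

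Essentially all of the analytic content sits inside Corollary \ref{cor: abound}, so the only genuinely new points are bookkeeping: identifying $H^0(\mathcal{F}\otimes\mathcal{O}_Z)$ with the fiber (resp.\ jet space) at reduced points (resp.\ length-two subschemes), confirming the small values $m'(0,1)=3$ and $m'(0,2)=4$, and — for the very ample half — recording that rational double points are Gorenstein so that the classical jet-separation criterion applies verbatim. The one step where I expect a careful reader to want a precise reference rather than a one-line remark is this last translation, from ``separates jets along every length-$\le 2$ subscheme'' to ``defines a closed embedding''; the rest is immediate from the corollary.
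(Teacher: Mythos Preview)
Your proposal is correct and follows essentially the same route as the paper: both reduce directly to Corollary~\ref{cor: abound} with $l_Z = 1$ (global generation) and $l_Z = 2$ (very ampleness), using $m'(0,1)=3$ and $m'(0,2)=4$. The paper's proof is terser---it just records the optimizing choices $(l_1,l_2)=(0,1)$ and $(1,1)$ and leaves the translation from jet-separation to global generation/very ampleness implicit---whereas you spell out that translation and note the Gorenstein input; but the mathematical content is identical.
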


\begin{remark}
  The proof of the optimal bound for very ampleness in the smooth case is contained in \cite{arcara2009reider}, but our argument appears to be the first proof of the correct bound for global generation via Bridgeland stability. 
\end{remark} 

In positive characteristic, we can no longer take $C_X = 0$ for every smooth surface $X$, but there is still an explicit bound:

\begin{corollary}
    Let $X$ be a smooth projective surface defined over an algebraically closed field of positive characteristic and $Z \subset X$ a dimension 0 subscheme of length $l_Z.$ Following Koseki \cite{koseki2023bogomolov}, $C_X$ depends only on the birational equivalence class of $X$ and is defined as: 
\begin{enumerate}
     \item  If $X$ is a minimal surface of general type, then $C_X = 2 + 5K_X^2 -\chi(\mc O_X).$
    \item  If $\kappa(X) = 1$ and $X$ is quasi-elliptic, then $C_X = 2 - \chi(\mc O_X).$
    \item Otherwise, $C_X = 0.$
    \end{enumerate} Then
$$H^1(X, \omega_X \dt L^{\otimes a} \otimes I_Z) = 0$$
for any integer
$ a \ge m'(C_X, l_Z).$ In particular, this implies $\omega_X \otimes L^{\otimes a}$ separates jets along $Z,$ i.e. 
$$H^0(\omega_X \otimes L^{\otimes a})\rightarrow H^0(\omega_X \otimes L^{\otimes a}\otimes \mc O_Z)$$ is surjective. 
\end{corollary}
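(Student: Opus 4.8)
The plan is to deduce this statement directly from Corollary \ref{cor: abound}; the only real content is to verify that Koseki's Bogomolov-type inequality \cite{koseki2023bogomolov} licenses the three displayed values as admissible choices for the constant $C_X$ of Section \ref{sec:preliminaries}. Recall that $C_X$ enters our arguments only through the requirement that a Bogomolov-type inequality --- bounding the discriminant $\Delta$ from below by a fixed multiple of $C_X$ times the square of the rank, for torsion-free sheaves that are slope-semistable with respect to the relevant polarization --- holds on $X$. So I would first recall from Section \ref{sec:preliminaries} the precise form of this requirement, and then quote \cite{koseki2023bogomolov}, which establishes exactly such an inequality on a smooth projective surface over an algebraically closed field of positive characteristic, with the constant taking the value $2 + 5K_X^2 - \chi(\mathcal O_X)$ when $X$ is minimal of general type, $2 - \chi(\mathcal O_X)$ when $\kappa(X) = 1$ and $X$ is quasi-elliptic, and $0$ otherwise.

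With this identification, the rest is formal: Corollary \ref{cor: abound} applied to this $X$ and this value of $C_X$ gives $H^1(X, \omega_X \dt L^{\otimes a} \otimes I_Z) = 0$ for every integer $a \ge m'(C_X, l_Z)$, and since $X$ is smooth $\omega_X \dt L^{\otimes a}$ is simply the line bundle $\omega_X \otimes L^{\otimes a}$. The asserted surjectivity of $H^0(\omega_X \otimes L^{\otimes a}) \to H^0(\omega_X \otimes L^{\otimes a} \otimes \mathcal O_Z)$ then drops out of the long exact sequence attached to $0 \to \omega_X \otimes L^{\otimes a} \otimes I_Z \to \omega_X \otimes L^{\otimes a} \to \omega_X \otimes L^{\otimes a} \otimes \mathcal O_Z \to 0$, exactly as in the proof of Corollary \ref{cor: abound}.

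Two minor points deserve a line each. First, $m'$ is only defined on $\RR_{\ge 0} \times \ZZ_{\ge 0}$, so I should note that the displayed constants are nonnegative (and in any case a negative value could harmlessly be replaced by $0$, which only weakens the inequality and hence remains valid); for a minimal surface of general type this uses the standard numerical constraints, which together with $K_X^2 \ge 1$ force $\chi(\mathcal O_X) \le 5K_X^2 + 2$. Second, Koseki phrases the constant as a birational invariant, whereas $\omega_X \otimes L^{\otimes a}$ and the stability-theoretic input are attached to the chosen smooth model; but all we need is that the displayed value is \emph{a} valid choice of $C_X$ for $X$ itself, which is precisely what \cite{koseki2023bogomolov} provides.

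The only obstacle is conventional bookkeeping: aligning the normalization of the Bogomolov inequality used to pin down $C_X$ in Section \ref{sec:preliminaries} (discriminant versus $c_1^2$ minus a correction term, powers of the rank, and the polarization against which semistability is tested) with the normalization in \cite{koseki2023bogomolov}. Once the conventions match there is nothing further to prove.
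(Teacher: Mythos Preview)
Your proposal is correct and matches the paper's approach: the paper does not give a separate proof of this corollary, treating it as an immediate specialization of Corollary \ref{cor: abound} once one inputs Koseki's values of $C_X$ for smooth surfaces in positive characteristic (this input is already recorded in Section \ref{sec: cx}). Your additional remarks on nonnegativity of the constants and the birational-invariance wording are reasonable clarifications that the paper leaves implicit.
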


\begin{remark}
It is known that Fujita's conjecture does not hold for smooth surfaces in positive characteristic. In fact, for any power $a$ one can find a smooth surface $X$ such that $\omega_X\otimes L^{\otimes a}$ is not globally generated \cite{gu2022counterexamples}. Our results account for this, as the bound on $a$ depends on the constant $C_X$ corresponding to the surface $X.$ 
\end{remark}
\begin{remark}
Note that for $l_Z=0$ we get a Kodaira vanishing type result in positive characteristic, again depending on $C_X.$ 
\end{remark}

Now specializing to the case when $X$ is complex, we also obtain similar results when the dualizing sheaf $\omega_X$ is replaced by the dual of $\O0,$ where $\O0$ is the $0$-th Du Bois complex of $X$. This line of inquiry was based on the observation that $\O0[1]$ has the shape of a (potentially) Bridgeland stable object (namely, slope stable sheaf in degree $-1$ and dimension 0 sheaf in degree 0), and follows the philosophy of viewing the Du Bois complexes as better-behaved versions of the sheaves of K\"ahler differentials on $X$ (or in this case, of $\O0$ as a better-behaved version of $\ms O_X$).
To be more precise, consider  $$\omega_X^{DB}:= \RHom(\O0, \omega_X),$$ which is in fact a sheaf (see Section \ref{sec:DB} for details). Then we obtain similar Reider-type results for $\omega_X^{DB}\otimes L^{\otimes a}.$ (It may be helpful to keep in mind that, at least in the Gorenstein case, $\omega_X^{DB} = \omega_X \otimes I$, where $I$ is an ideal sheaf supported exactly on the non-Du Bois locus. Thus, the problem of $\omega_X^{DB} \otimes L^{\otimes a}$ separating jets along some zero-dimensional subscheme $Z$ is more or less the problem of $\omega_X \otimes L^{\otimes a}$ separating jets along a larger subscheme containing both $Z$ and the non-Du Bois locus, and so the bounds are correspondingly worse.)

\begin{theorem}\label{thm: DB+RD}
Let $X$ be a projective complex normal surface, $Z \subset X$ a subscheme of length $l_Z$, and $L$ an ample line bundle on $X$. Let $l_T := \Length(\mc H^1(\O0))$. 
Let $l_1, l_2$ be a pair of nonnegative integers such that $l_Z + l_T = l_1 + l_2$.
Suppose
$L^2 \ge \max( (C_X + 2l_2+1)^2, 4(l_1+l_2+C_X) + \epsilon) $
and $L \cdot C \ge \max(2(2l_1 + C_X), C_X + 2l_2 + 1)$ for every curve $C$.
Then we have the vanishing
$$H^1(X, \omega_X^{DB} \dt L \otimes I_Z) = 0.$$ In particular, this implies $\omega_X^{DB} \otimes L$ separates jets along $Z,$ i.e. 
$$H^0(\omega_X^{DB}\otimes L)\rightarrow H^0(\omega_X^{DB}\otimes L\otimes \mc O_Z)$$ is surjective.
\end{theorem}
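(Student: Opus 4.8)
The plan is to mimic the proof of Theorem~\ref{thm :VanishingOmega}, with $\O0$ playing the role that $\mc O_X$ plays there. The first step is a duality reduction: a normal surface is Cohen--Macaulay, so $\omega_X$ is dualizing up to shift, and $\omega_X^{DB}=\RHom(\O0,\omega_X)$ satisfies the biduality $\RHom(\omega_X^{DB},\omega_X)\cong\O0$; combined with Serre duality and $\otimes$--$\RHom$ adjunction this gives
\[
H^1\!\big(X,\;\omega_X^{DB}\dt L\dt I_Z\big)^{\vee}\;\cong\;\Ext^1_{D(X)}\!\big(L\dt I_Z,\;\O0\big).
\]
So it suffices to show $\Ext^1_{D(X)}(L\dt I_Z,\O0)=0$, equivalently that every distinguished triangle $\O0\to E\to L\dt I_Z\to\O0[1]$ splits. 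When $\O0=\mc O_X$ (i.e.\ $l_T=0$) this is exactly the reduction behind Theorem~\ref{thm :VanishingOmega}; the observation in the introduction that $\O0[1]$ has the shape of a (potentially) Bridgeland-stable object — a slope-stable line bundle in cohomological degree $-1$ together with a length-$l_T$ sheaf in degree $0$ — is the statement that $\O0[1]$ is the correct replacement for the stable object $\mc O_X[1]$.

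Now assume for contradiction that some class is nonzero, with cone $E$ a non-split extension. Following the Reider/Arcara--Bertram argument, one first replaces $Z$ by a subscheme $Z''\subseteq Z$ and picks the extension class as locally split as possible, so that (possibly after a shift and a twist by a line bundle) $E$ becomes an object of the heart of one of Langer's tilted $t$-structures on $X$, sitting in an exact sequence there whose outer terms are the $\sigma$-stable objects built from $\O0[1]$ and from $L\dt I_Z$. Its Chern character is
\[
\Ch(E)=\Ch(\O0)+\Ch(L\dt I_Z)=\Big(2,\;L,\;\tfrac{L^2}{2}-l_Z-l_T\Big),
\]
so $E$ has the discrete invariants of a rank-two object with ``$c_2=l_Z+l_T$''. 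This is the only point at which $l_T$ intervenes, and it explains why the numerical hypotheses here are literally those of Theorem~\ref{thm :VanishingOmega} with $l_1+l_2=l_Z$ replaced by $l_1+l_2=l_Z+l_T$.

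The remainder is the wall-crossing and Bogomolov-inequality analysis of Theorem~\ref{thm :VanishingOmega}, applied verbatim. Moving the stability condition along the relevant path, $E$ is $\sigma$-(semi)stable in one chamber; if it destabilizes, the maximal destabilizing sub- or quotient object produces an effective divisor $C\subset X$ whose intersection numbers $L\cdot C$ and $C^2$ are then forced, via Langer's Bogomolov inequality with constant $C_X$, to violate one of $L^2\ge\max\big((C_X+2l_2+1)^2,\,4(l_1+l_2+C_X)+\epsilon\big)$ or $L\cdot C\ge\max\big(2(2l_1+C_X),\,C_X+2l_2+1\big)$. Hence no non-split $E$ exists, $\Ext^1_{D(X)}(L\dt I_Z,\O0)=0$, and $H^1(X,\omega_X^{DB}\dt L\dt I_Z)=0$. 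The separation-of-jets statement follows as in Theorem~\ref{thm :VanishingOmega} from the long exact cohomology sequence of $0\to\omega_X^{DB}\otimes L\otimes I_Z\to\omega_X^{DB}\otimes L\to\omega_X^{DB}\otimes L\otimes\mc O_Z\to0$, after comparison with the derived tensor product.

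I expect the main obstacle to be the assertion, used throughout, that $\O0[1]$ is genuinely $\sigma$-(semi)stable for every stability condition encountered in the wall-crossing — in contrast to the trivial fact that $\mc O_X[1]$ is. This needs honest input on the Du Bois complex: that $X$ being a normal surface forces $\O0$ to be concentrated in cohomological degrees $0$ and $1$, with $\mc H^0(\O0)=\mc O_X$ slope-stable and $\mc H^1(\O0)$ a length sheaf, and that the resulting two-term complex is not destabilized; I expect this to be carried out in Section~\ref{sec:DB}. A secondary point is checking that the derived nature of the tensor products — relevant precisely when $\O0$ or $\omega_X^{DB}$ fails to be locally free along $Z$, which is what makes the $\dt$ necessary in the statement — causes no trouble in the Reider-style reduction to a subscheme, exactly as in Theorem~\ref{thm :VanishingOmega}.
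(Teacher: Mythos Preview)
Your duality reduction is correct and matches Proposition~\ref{prop:restatement}, and your high-level claim --- that the proof is identical to that of Theorem~\ref{thm :VanishingOmega} with $\O0[1]$ substituted for $\mc O_X[1]$ --- is exactly how the paper proceeds: both are applications of the general Theorem~\ref{thm: generalvanishing} for objects ``of type~$O$'' (Definition~\ref{def: type O}), and the separation-of-jets consequence is Proposition~\ref{prop:derivedtensor}.

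Where your sketch departs from the paper is in the mechanism of that general theorem. You frame the argument around the rank-two extension $E$ and wall-crossing; the paper never forms $E$. Instead it takes a nonzero $f \in \Hom(L \otimes I_Z, \O0[1])$ in the heart $\Coh_{1/2}$, shows $\im f$ is a torsion sheaf (Proposition~\ref{prop:r=1}), and plays this single torsion object off as a quotient of $L \otimes I_{Z}$ and as a subobject of $\O0[1]$ at two \emph{fixed} stability conditions $(s,t_1)$ and $(s,t_2)$, using that $d_{1/2,t}$ of a torsion sheaf is independent of~$t$. More importantly, the freedom to choose any $(l_1, l_2)$ with $l_1 + l_2 = l_Z + l_T$ is not a numerical artifact of $\Ch(E)$ as you suggest; it comes from a separate \emph{redistribution} step (Propositions~\ref{prop:length1} and~\ref{prop:length2}) that literally moves points between $Z$ and $\mc H^0(F)$ to manufacture a new pair $(Z', F')$ with $(l_{Z'}, l_{F'}) = (l_1, l_2)$, and then runs the image argument for that pair. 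Your extension picture does not supply this step, and without it one only recovers the bound with $(l_1, l_2) = (l_Z, l_T)$ fixed, which is strictly weaker than the stated theorem.

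On the Du Bois input, you correctly flag it as the main subtlety, but the property actually needed is sharper than the cohomology-sheaf description you give: it is the vanishing $\Hom(\mc O_p, \O0[1]) = 0$ for every closed point~$p$ (Proposition~\ref{prop: omegaDB sheaf}), deduced from the fact that $\omega_X^{DB} = \DD(\O0)$ is a genuine sheaf via the Kov\'acs--Schwede injectivity theorem. The paper explicitly notes this cannot be read off from the $\mc H^i(\O0)$ alone, and it is precisely what makes $\O0[1]$ ``of type~$O$'' --- it rules out zero-dimensional destabilizing subobjects in Proposition~\ref{prop: O} and is essential to the redistribution step.
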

\begin{corollary}\label{cor: abounddb}
Let $X$ be a projective normal complex surface, $Z \subset X$ a subscheme of length $l_Z$, and $L$ an ample line bundle on $X$. Let $l_X := \Length(\mc H^1(\O0))$. Then
$$H^1(X, \omega_X^{DB} \dt L^{\otimes a} \otimes I_Z) = 0$$
for any integer
$ a \ge m'(C_X, l_Z + l_X)$, where $m'$ is defined in Definition \ref{def: m'}. In particular, this implies $\omega_X^{DB} \otimes L^{\otimes a}$ separates jets along $Z,$ i.e. 
$$H^0(\omega_X^{DB} \otimes L^{\otimes a})\rightarrow H^0(\omega_X^{DB} \otimes L^{\otimes a}\otimes \mc O_Z)$$ is surjective.
\end{corollary}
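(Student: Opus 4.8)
The plan is to deduce this formally from Theorem \ref{thm: DB+RD}, exactly as Corollary \ref{cor: abound} is deduced from Theorem \ref{thm :VanishingOmega}: apply Theorem \ref{thm: DB+RD} with the ample line bundle $L$ replaced by $L^{\otimes a}$ and with $l_T = l_X = \Length(\mc H^1(\O0))$, and check that the numerical hypotheses hold automatically once $a \ge m'(C_X, l_Z + l_X)$. The only facts I need about ampleness are the integrality estimates $L^2 \ge 1$ and $L\cdot C \ge 1$ for every curve $C$ (Nakai--Moishezon, together with the fact that these intersection numbers with the Cartier divisor $L$ are integers), which give $(L^{\otimes a})^2 = a^2 L^2 \ge a^2$ and $(L^{\otimes a})\cdot C = a\,(L\cdot C) \ge a$. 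I will read the ``$+\epsilon$'' in Theorem \ref{thm: DB+RD} as an arbitrarily small positive quantity, i.e. the self-intersection hypothesis is the strict inequality $L^2 > 4(l_1+l_2+C_X)$; this is what the bookkeeping below will require at the boundary.

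Next I would fix, for $l := l_Z + l_X$, a pair of nonnegative integers $(l_1,l_2)$ with $l_1+l_2 = l$ attaining the minimum in Definition \ref{def: m'}, so that $m(C_X,l) = \max\!\big(2(2l_1+C_X),\, C_X + 2l_2 + 1\big)$. Since $m'(C_X,l) \ge m(C_X,l)$ for every $(C_X,l)$, the hypothesis $a \ge m'(C_X,l)$ gives $a \ge \max(2(2l_1+C_X), C_X+2l_2+1)$, whence $(L^{\otimes a})\cdot C \ge a \ge \max(2(2l_1+C_X), C_X+2l_2+1)$ for every curve $C$ — the curve condition of Theorem \ref{thm: DB+RD}. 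For the self-intersection condition, $a \ge C_X + 2l_2 + 1 \ge 0$ immediately yields $a^2 \ge (C_X+2l_2+1)^2$, so it remains to verify $a^2 > 4(l_1+l_2+C_X)$. Here I would use the estimate $4(l_1+l_2+C_X) = (4l_1+2C_X) + (4l_2+2C_X) \le m(C_X,l) + \big(2\,m(C_X,l) - 2\big) = 3\,m(C_X,l) - 2 \le 3a - 2$, where $4l_1+2C_X \le m(C_X,l)$ and $4l_2 + 2C_X = 2(C_X+2l_2+1) - 2 \le 2\,m(C_X,l) - 2$. Thus it suffices to have $a^2 > 3a-2$, i.e. $(a-1)(a-2) > 0$, which holds for every integer $a \ge 3$. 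The cases $a \in \{1,2\}$ are treated by inspection of Definition \ref{def: m'}: $m'(C_X,l) \le 2$ forces $l = 0$ (so $l_1 = l_2 = 0$) and $C_X < 1$ — the value $C_X = 1,\, l=0$ being precisely where $m'$ is bumped up to $3$ — and then $a^2 = 4 > 4C_X$ and $a^2 = 4 > (C_X+1)^2$ hold directly; while $m'(C_X,l) \le 1$ forces $C_X = 0,\, l = 0$, where $a^2 = 1 > 0$ and $a^2 \ge (C_X+1)^2$. With all hypotheses verified, Theorem \ref{thm: DB+RD} gives $H^1(X, \omega_X^{DB} \dt L^{\otimes a} \otimes I_Z) = 0$, and the separation-of-jets statement follows from it exactly as stated there.

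I do not expect a genuine obstacle: the argument is pure bookkeeping once one notes that $L$ ample forces the crude bounds $L^2 \ge 1$ and $L\cdot C \ge 1$. \textbf{The one point that deserves care}, and the reason the piecewise definition of $m'$ is needed at all, is the boundary case $C_X = 1$, $l_Z + l_X = 0$: the naive choice $a = m(C_X,0) = 2$ would only give $a^2 L^2 = 4 = 4(l_1+l_2+C_X)$ when $L^2 = 1$, which is exactly the borderline excluded by the strict inequality in Theorem \ref{thm: DB+RD}; replacing $2$ by $m'(1,0) = 3$ restores a strict inequality, and dually this is where I must insist that the ``$+\epsilon$'' be interpreted as strictness rather than as a fixed positive constant.
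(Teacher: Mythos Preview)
Your proof is correct and follows essentially the same route as the paper: the paper does not prove Corollary~\ref{cor: abounddb} separately but obtains it by specializing Corollary~\ref{cor: fujita} to $F = \O0[1]$, and that corollary is precisely the bookkeeping reduction you carry out (verifying that $a \ge m'(C_X,l)$ forces the numerical hypotheses of the underlying vanishing theorem). The only difference is in the estimate used to handle the strict bound $a^2 > 4(l+C_X)$: the paper uses $m(C_X,l) \ge C_X + l$ and then argues $C_X + l > 2\sqrt{C_X+l}$ for $C_X+l$ large, followed by a small-case check, whereas your inequality $4(l+C_X) \le 3m(C_X,l) - 2$ reduces directly to $(a-1)(a-2) > 0$ for integers $a \ge 3$; your version is a bit sharper and makes the case analysis for $a \in \{1,2\}$ slightly cleaner, but the two arguments are interchangeable.
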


We now briefly explain the strategy of the proof. First, we note the separation of jets along $Z$ of $\omega_X \otimes L$ is implied by the vanishing $H^1(X, \omega_X \dt L \otimes I_Z) = 0,$ which is equivalent to the vanishing of $\Hom(L \otimes I_Z, \ms O_X[1])$. In the case of $\omega_X^{DB}$, the object $\ms O_X[1]$ is replaced by $\O0[1]$, and so we consider more generally the spaces of morphisms from objects of the form $L \otimes I_Z$ to a class of objects ``of type $O$" (see Definition \ref{def: type O} for details), which includes both $\ms O_X[1]$ and $\O0[1]$. As in Arcara-Bertram \cite{arcara2009reider}, the goal is to find Bridgeland stability conditions with respect to which objects of type $O$ (respectively, objects of the form $L \otimes I_Z$) are stable, and then use Schur's lemma to conclude. Refining the argument slightly, we see that in fact the image of a nonzero morphism $f$ of this form (appropriately defined) is a torsion sheaf whose support gives an effective divisor satisfying Reider-type conditions.\newline\newline
\textbf{Outline of the paper.} In Section \ref{sec:preliminaries}, we fix notation and recall the necessary background on Bridgeland stability conditions on a (potentially singular) normal surface. We also briefly introduce Du Bois complexes and discuss some of their properties. In section \ref{sec: Bridgeland} we find conditions guaranteeing that objects of the form $L \otimes I_Z$ and objects of type $O$ are Bridgeland stable, adapting the proof of Arcara and Bertram \cite{arcara2009reider} to the more general setting of normal surfaces. We also explain why these results are stronger than we need for the proof of the main theorem, where it suffices to consider only destabilizing objects of a certain form. In section \ref{sec: theorem}, we deduce the final form of our main technical theorem (Theorem \ref{thm: generalvanishing}), from which all our results follow. \newline\newline
\textbf{Acknowledgements.} We are grateful to Mihnea Popa for suggesting this problem and for insightful conversations throughout the project. We would also like to thank Davesh Maulik, Mircea Musta{\c{t}}{\u{a}, Sung Gi Park, and Wanchun Shen for valuable discussions. 

\section{Preliminaries}\label{sec:preliminaries}
In this section, we review Langer's construction of Bridgeland stability conditions on normal proper surfaces \cite{langer2024bridgeland} (cf. \cite{nuer2023bogomolov}). We also give a brief introduction to Du Bois complexes. In particular, we focus on the zeroth Du Bois complex of a normal surface.

\subsection{Bridgeland stability on normal surfaces}\label{sec:Mumford}
Let $X$ be a normal surface. Langer constructs Chern character homomorphisms
$$\Ch^M_i: K_0(X) \rightarrow A_{2-i}(X)_{\QQ},$$
where $\Ch^M_0$ and $\Ch^M_1$ are defined as usual on the smooth locus and $\Ch^M_2$ is defined so that a Riemann–Roch formula holds (see \cite{langer2022intersection} for details). We will simply denote the homomorphism by $\Ch_i$ in what follows. (This agrees with the standard Chern character on vector bundles, justifying the use of the notation.)

In addition to the Chern character, Langer's definition uses the Mumford intersection product on $A_1(X)$ defined as follows \cite[Ex. 7.1.16]{fulton2013intersection}:
Let $\pi: \tilde{X} \rightarrow X$ be a minimal resolution of singularities. Then we define a homomorphism
$A_1(X) \rightarrow A_1(\tilde{X})_{\QQ}$ by
$$\alpha = [C] \mapsto \alpha' := [\tilde{C}] + \Delta,$$
where $C$ is assumed to be an irreducible curve on $X$, $\tilde{C}$ is the proper transform of $C$, and $\Delta$ is the unique $\mathbb{Q}$-divisor supported on the exceptional locus of $\pi$ such that for any component $E$ of the exceptional locus, we have $\alpha' \cdot E = 0$. For future use, we record the following observations:

\begin{observations} \label{obs:int}
\begin{enumerate}
\item Writing $\sing(X) = \{p_1, \ldots, p_a\}$ (as a normal surface has isolated singularities) and $\pi^{-1}(p_i) = \cup E_{ij}$ the set of irreducible components, then for $N_i := \det((E_{ij} \cdot E_{ik})_{j,k})$ and $N := \lcm_i N_i$, we have $\im(A_1(X)) \subset \frac{1}{N} \ZZ \otimes A_1(\tilde{X})$.
\item If $\alpha$ is the class of a Cartier divisor $D$, then $\alpha' = [\pi^*D] \in A_1(\tilde{X})_\ZZ$.
\item Assuming from now on that $X$ is in addition proper, then the composition of the map $A_1(X) \rightarrow A_1(\tilde{X})_\QQ$ with the standard intersection product $A_1(\tilde{X})_\QQ \otimes A_1(\tilde{X})_\QQ \rightarrow \QQ$ yields an intersection product on $A_1(X)$ satisfying the Hodge index theorem.
\item Given $\alpha,\beta \in A_1(X)$ with $\alpha' \in \frac{1}{M} \ZZ \otimes A_1(\tilde{X})$, then $\alpha \cdot \beta = \alpha' \cdot \tilde{\beta} \in \frac{1}{M} \ZZ$, as $\beta' = \tilde{\beta} + \Delta$ with $\alpha' \cdot \Delta = 0$. In particular, if $\alpha$ is the class of a Cartier divisor, then $\alpha \cdot \beta \in \ZZ$. (More precisely, $\alpha \cdot \beta$ is given by composition of the degree map with the first Chern class intersection map of \cite[Section 2.5]{fulton2013intersection}).
\end{enumerate}
\end{observations}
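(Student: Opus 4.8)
I would deduce all four items from one structural fact, to be recalled first: for the minimal resolution $\pi\colon\tilde X\to X$ with $\pi^{-1}(p_i)=\bigcup_j E_{ij}$, each intersection matrix $M_i=(E_{ij}\cdot E_{ik})_{j,k}$ is negative definite (Mumford, via the Hodge index theorem on $\tilde X$ together with contractibility of the $E_{ij}$). Hence $M_i$ is invertible, the correction term $\Delta$ in the definition of $\alpha\mapsto\alpha'$ exists and is unique, and --- writing $\Lambda_\QQ\subset A_1(\tilde X)_\QQ$ for the $\QQ$-span of the classes $[E_{ij}]$ --- the class $\alpha'$ is characterized as the unique element of $A_1(\tilde X)_\QQ$ with $\pi_*\alpha'=\alpha$ and $\alpha'\in\Lambda_\QQ^{\perp}$ (and this characterization survives the linear extension from classes of irreducible curves to all of $A_1(X)$). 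I would also note at the outset that the $[E_{ij}]$ are $\QQ$-linearly independent, since their intersection matrix is nonsingular, and that $A_1(X)_\QQ\cong A_1(\tilde X)_\QQ/\Lambda_\QQ$ because $\pi$ is an isomorphism away from the codimension-two set $\sing(X)$.

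For (1), I would fix an irreducible curve $C$, write $\Delta=\sum_{i,j}c_{ij}E_{ij}$, and observe that the defining conditions $\alpha'\cdot E_{ik}=0$ split, for each $i$, into the linear system $M_i\,\mathbf c_i=-\mathbf b_i$ with $\mathbf b_i=\left((\tilde C\cdot E_{ik})\right)_k\in\ZZ^{n_i}$; Cramer's rule then gives $c_{ij}\in\tfrac1{N_i}\ZZ$, so $\Delta\in\tfrac1N\ZZ\otimes A_1(\tilde X)$ with $N=\lcm_i N_i$, and hence $\alpha'=[\tilde C]+\Delta\in\tfrac1N\ZZ\otimes A_1(\tilde X)$. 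Since $N$ is independent of $C$ and $A_1(X)$ is generated by classes of irreducible curves, item (1) follows by linearity. For (2), if $D$ is Cartier then $\pi^*D$ is a well-defined integral class with $\pi_*(\pi^*D)=D$, and the projection formula gives $\pi^*D\cdot E_{ij}=D\cdot\pi_*[E_{ij}]=0$; thus $[\pi^*D]$ satisfies the characterization above, and uniqueness forces $\alpha'=[\pi^*D]\in A_1(\tilde X)_\ZZ$.

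For (3), bilinearity and symmetry of $\alpha\cdot\beta:=\alpha'\cdot\beta'$ are inherited from the intersection form on $\tilde X$. For the Hodge index statement I would use that $\Lambda_\QQ$ is negative definite, so $A_1(\tilde X)_\QQ=\Lambda_\QQ\oplus\Lambda_\QQ^{\perp}$ orthogonally, and $\Lambda_\QQ^{\perp}$ coincides with the image of $\alpha\mapsto\alpha'$ (it contains the image by construction, and the two subspaces have the same dimension by the displayed isomorphism $A_1(X)_\QQ\cong A_1(\tilde X)_\QQ/\Lambda_\QQ$). Passing to numerical classes, the Hodge index theorem on $\tilde X$ gives signature $(1,\rho(\tilde X)-1)$ modulo the radical; the negative definite summand $\Lambda_\QQ$ meets neither the positive line nor the radical, so the form induced on $A_1(X)_\QQ\cong\Lambda_\QQ^{\perp}$ has exactly one positive direction, which is the Hodge index theorem on $X$. (Alternatively one may simply cite \cite[Ex.~7.1.16]{fulton2013intersection}.) For (4), I would write $\beta'=\tilde\beta+\Delta_\beta$ with $\tilde\beta\in A_1(\tilde X)_\ZZ$ and $\Delta_\beta\in\Lambda_\QQ$; since $\alpha'\in\Lambda_\QQ^{\perp}$ we get $\alpha'\cdot\Delta_\beta=0$, hence $\alpha\cdot\beta=\alpha'\cdot\tilde\beta$, which lies in $\tfrac1M\ZZ$ whenever $\alpha'\in\tfrac1M\ZZ\otimes A_1(\tilde X)$, and lies in $\ZZ$ when $\alpha=[D]$ is Cartier, since then $\alpha'=[\pi^*D]$ is integral by (2). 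Finally, for Cartier $\alpha$ the projection formula gives $\pi^*D\cdot\tilde\beta=\deg(\pi^*c_1(\O_X(D))\cap\tilde\beta)=\deg(c_1(\O_X(D))\cap\pi_*\tilde\beta)=\deg(c_1(\O_X(D))\cap\beta)$, matching the description via \cite[Section~2.5]{fulton2013intersection}.

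The main obstacle, to the extent there is one, is item (3): one must be disciplined about rational versus numerical equivalence --- the intersection form on $A_1(\tilde X)_\QQ$ is a priori degenerate, so the signature assertion should be read on $\Num$ --- and one must check that the negative definite lattice $\Lambda_\QQ$ sits inside the negative part of the form and is disjoint from its radical, which is again an application of the negative definiteness of the $M_i$. Everything else reduces to Cramer's rule, the uniqueness of $\Delta$, and the projection formula, so I expect no genuine difficulty in those parts.
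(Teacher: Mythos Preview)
The paper does not actually prove these observations; they are recorded without argument as background facts, with a pointer to \cite[Ex.~7.1.16]{fulton2013intersection} for (3). Your write-up therefore supplies what the paper leaves implicit, and it does so correctly: Cramer's rule for (1), the projection formula plus uniqueness of $\Delta$ for (2), the orthogonal splitting $A_1(\tilde X)_\QQ=\Lambda_\QQ\oplus\Lambda_\QQ^{\perp}$ together with negative definiteness of $\Lambda_\QQ$ for (3), and the vanishing $\alpha'\cdot\Delta_\beta=0$ for (4). Your care in (3) about the radical of the intersection form and about well-definedness of $\alpha\mapsto\alpha'$ on classes (not just cycles) is appropriate and is the only place where a naive argument could go wrong.

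One small remark on (4): the notation $\tilde\beta$ in the paper tacitly assumes $\beta$ is the class of an actual cycle (so that a strict transform makes sense), and you correctly interpret this as any integral lift of $\beta$ to $A_1(\tilde X)_\ZZ$; the conclusion is insensitive to the choice of lift precisely because $\alpha'\perp\Lambda_\QQ$. Otherwise there is nothing to correct.
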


Now, let $X$ be a normal proper surface over an algebraically closed field, and choose $\RR$-divisors $\omega, B \in A_1(X)_\RR$ such that $\omega$ is numerically ample. Langer defines a Bridgeland stability condition $(Z_{\omega,B}, \Coh_{\omega,B})$ as follows: First, he shows that one can choose a constant $C_X \in \RR_{\ge 0}$ satisfying the Bogomolov-type inequality
$$\int_X \Ch_1(E)^2 - 2 \Ch_0(E) \cdot \Ch_2(E) + C_X \Ch_0(E)^2 \ge 0$$
for any torsion-free, $\omega$-slope semistable coherent sheaf $E$ on $X$. (For example, if $X$ is smooth and defined over an algebraically closed field of characteristic 0, then by the ordinary Bogomolov inequality we may choose $C_X = 0$.
For more discussion, see \ref{sec: cx}) 

Given the Bogomolov inequality on $X$, one defines the central charge
$$Z_{\omega,B}: K_0(X) \rightarrow \CC, E \mapsto - \int_X e^{-(B +i\omega)} \Ch(E) + \frac{C_X}{2} \Ch_0(E).$$
We note that this matches the standard definition of the central charge in the smooth case, assuming one takes $C_X = 0$. The heart $\Coh_{\omega,B}$ is defined exactly as in the smooth case, i.e., we define $\Coh_{\omega,B} := \langle \mc F_{\omega,B}, \mc T_{\omega,B} \rangle$ to be the tilt of $\Coh(X)$ by the torsion pair
$$\mc T_{\omega,B} := \{E \in \Coh(X): \textrm{all $\omega$-slope HN factors are of slope $\mu_\omega > B \cdot \omega$}\}$$
$$\mc F_{\omega,B} := \{E \in \Coh(X): \textrm{all $\omega$-slope HN factors are of slope $\mu_\omega \le B \cdot \omega$}\},$$
where the $\omega$-slope of a coherent sheaf $E$ is defined by
$$\mu_\omega(E) := \frac{\Ch_1(E) \cdot \omega}{\Ch_0(E)} \in (-\infty, \infty]$$
(setting $\mu_\omega(E) = \infty$ when $\Ch_0(E) = 0$).

Recall that the central charge encodes the Bridgeland rank
$r_{\omega, B}(E) := \Im Z_{\omega, B}(E)$ and degree $d_{\omega, B}(E) := - \Re Z_{\omega,B}(E)$. For $E \ne 0 \in \Coh_{\omega, B}$ we have $r_{\omega,B}(E) \ge 0$, and in the case of equality $d_{\omega,B}(E) > 0$. We then define the Bridgeland slope
$$\nu_{\omega, B}(E) := \frac{d_{\omega, B}(E)}{r_{\omega, B}(E)} \in (-\infty, \infty].$$
An object of $\Coh_{\omega,B}$ is said to be Bridgeland stable if it has Bridgeland slope greater than that of any proper subobject. Given Bridgeland stable objects $F \ne G \in \Coh_{\omega,B}$ with $\nu_{\omega,B}(F) \ge \nu_{\omega, B}(G)$ we have the important property that $\Hom(F,G) = 0$.

In this paper, we only consider the half-plane of stability conditions $(\omega, B) = (tH, sH)$ for a fixed ample $H \in A_1(X)$ and varying $(t,s) \in \RR^{>0} \times \RR$. We will use $Z_{s,t}$ to denote $Z_{tH, sH}$, and similarly for $r_{s,t}, \mu_{s,t},$ etc. We note that $\Coh_{s,t} =: \Coh_s$ is in fact independent of the choice of $t \in \RR^{>0}$.
For convenience, we record the following:
$$r_{s,t}(E) =  tH \cdot (\Ch_1(E) - s \Ch_0(E) H)$$
$$d_{s,t}(E)= \Ch_2(E) - s \Ch_1(E) \cdot H + \Ch_0(E) \frac{(s^2-t^2) H^2 - C_X}{2}$$

\subsection{More about the constant $C_X$}\label{sec: cx}
Since the constant $C_X$ appears in our Reider-type bounds, we will briefly recall its definition and compute it in an example.

 First, it is not hard to reduce to showing the inequality for an arbitrary reflexive sheaf $E.$ Let $f: \tilde{X} \rightarrow X$ be a minimal resolution of $X$, and set $F := (f^* E)^{**}$. 
 Then one uses the fact that a Bogomolov inequality is satisfied for $F$, i.e.,
$$\int_{\tilde{X}} \Delta(F) \ge 
{-C_{\tilde{X}} r^2}$$ which is classical in the characteristic 0 (with $C_{\tilde{X}}=0$) and a theorem of Koseki in positive characteristic \cite{koseki2023bogomolov}.

Let $c_1(f,F)$ be the divisor supported on the exceptional locus of $f$ uniquely defined by the property that $c_1(f,F) \cdot E_i = F \cdot E_i$ for each irreducible exceptional divisor $E_i$.
Langer calculates in \cite{langer2024bridgeland} that
$$\int_{\tilde{X}} \Delta(F) - \int_{X} \Delta(E) = 2r h^0(X, R^1f_* F) - 2r^2 h^0(X, R^1 f_* \ms O_{\tilde{X}}) + c_1(f,F)^2 - r c_1(f, F) \cdot K_{\tilde{X}} $$
and so we get
$$\int_X \Delta(E) \ge -2r h^0(X, R^1f_* F) + 2r^2 h^0(X, R^1 f_* \ms O_{\tilde{X}}) - c_1(f,F)^2 + r c_1(f, F) \cdot K_{\tilde{X}}{-C_{\tilde{X}}r^2}.$$
In \cite{langer2022intersection}, Langer shows that
$$h^0(X, R^1 f_* F) \le (r+2) h^0(X, R^1 f_* \ms O_{\tilde{X}})$$
and so
$$-2rh^0(X, R^1 f_* F) + 2r^2 h^0(X, R^1 f_* \ms O_S)\ge 
-4r h^0(X, R^1 f_* \ms O_X).$$
Moreover, he calculates that for $E_j$ a component of the exceptional divisor we have
$$0 \le c_1(f, F) \cdot E_j \le (r+2) h^0(X, R^1 f_* \ms O_{\tilde{X}}) - r \chi(\ms O_{E_j}) - r E_j^2.$$
Putting these together, one sees that we can bound $\int_X \Delta(E) \ge - C_X r^2$ for some $C_X$ independent of $E$ (but depending on $C_{\tilde{X}}, h^0(X, R^1f_* \ms O_{\tilde{X}}))$, and the genera and intersection numbers of the $E_j$'s).
\begin{example}
Let $C$ be a degree 3 complex projective plane curve with line bundle $L=\mc O_{\mathbb{P}^2}(1)|_C.$ Then consider $X$ the projectivization of the cone over $C$ with conormal bundle $L$, i.e. of the affine cone
$$\text{Spec}\left(\bigoplus_{m\geq 0} H^0(C,L^m)\right).$$

Note that by \cite[Appendix A]{shen2023k} the cone has a Du Bois but not rational singularity. We would like to determine $C_X$ in this case.

First, note that $h^0(R^1 f_* \mc O_{\tilde{X}})= h^1(\mc O_C)=g(C)=1.$
Moreover, $E \cdot K_{\tilde{X}}=3$ by adjunction. Now, suppose $c_1(f,F)=\alpha E.$ Then, by the above discussion, {and since $C_{\tilde{X}} = 0$ by classical Bogomolov}, one can choose $C_X$ to be the smallest positive number such that $$-4r+3\alpha^2+3r\alpha \geq -C_Xr^2.$$ 
Then $-4r+3\alpha^2+3r\alpha$ is minimized at $\alpha=-\frac{r}{2}$ so we would like to find $C_X$ such that $-4r-\frac34r^2+C_Xr^2\geq 0$ for any integer $r\geq 1.$ This means $C_X=\frac{19}{4}.$

 \end{example}

 \begin{example}
     One can similarly compute a bound for $C_X$ for $X$ the cone over a smooth projective degree $d$ complex plane curve with conormal bundle $L=O_{\mathbb{P}^2}(1)|_C.$ This bound grows asymptotically on the order of $d^3.$  
 \end{example}

\subsection{Du Bois complexes}\label{sec:DB}

Let $X$ be a complex variety. One would like to consider an analogue of the standard de Rham complex on smooth varieties for singular varieties. Fix a hyperresolution $\varepsilon_\bullet: X_\bullet \rightarrow X.$ In \cite{du1981complexe}, following ideas of Deligne, Du Bois introduced $\underline{\Omega}_X^\bullet:= \mathbf{R}\varepsilon_{\bullet *}\Omega^k_{X_\bullet},$ which is an object in the derived category of filtered differential complexes on $X$, and showed that this is independent of the choice of hyperresolution. One can associate a filtration $ F^p\underline{\Omega}_X^\bullet:=\mathbf{R} \varepsilon_{\bullet *} \Omega_{X_\bullet}^{\geq p}$ by recalling that $\Omega_{X_i}^\bullet$is filtered by $\Omega_{X_i}^{\geq p}.$ Consider then the $p$-th Du Bois complex of $X$ $$\underline{\Omega}_X^p:=\text{gr}_F^p\underline{\Omega}_X^\bullet[p].$$ For more details on Du Bois complexes, we refer the reader to \cite{du1981complexe}, \cite[Chapter V]{guillen2006hyperresolutions}, or \cite[Chapter 7.3]{peters2008mixed}.

In this paper, we will be concerned with understanding the stability of $\underline{\Omega}_X^0,$ 
the 0-th Du Bois complex of $X.$ A result of Saito and Schwede \cite[Proposition 5.2]{saito2000mixed}, \cite[Lemma 5.6]{schwede2009f}  says that for any complex variety $X,$ $$\mc H^0 (\O0)=\ms O_{X^{\text{sn}}},$$ where $\ms O_{X^{\text{sn}}}$ is the structure sheaf of the seminormalization of $X.$

In our case, since $X$ is normal, note that we simply have $\mc H^0 (\O0)=\ms O_X.$

Moreover, for each $p \geq 0$, there is a canonical morphism $\Omega^k_p\rightarrow \underline{\Omega}_X^p$, which is an isomorphism if $X$ is smooth; see \cite[Page 175]{peters2008mixed}. In particular, the $\mc H^i(\underline{\Omega}_X^p)$ are supported on the singular locus of $X,$
for all $i> 0.$ Hence, in our case, $\mc H^1 (\O0)$ is supported in codimension 2. General vanishing results
for the cohomologies of Du Bois complexes show that $\mc H^i(\underline{\Omega}_X^0)=0$ for $i\neq 0,1.$
 
Note that any normal surface $X$ is Cohen-Macaulay (since it is $S_2$). Let $\omega_X$ be the dualizing sheaf of $X$, and consider the duality functor $\DD:= \RHom(-, \omega_X)$ on $D^b(X).$ We define $$\omega_X^{DB}:= \RHom(\O0, \omega_X).$$ 
Now, by the injectivity theorem of Kov{\'a}cs-Schwede 
\cite[Theorem 3.3]{kovacs2016dubois},
we have that the map
$$\RHom(\O0, \omega_X)\rightarrow \RHom( \mc O_X, \omega_X ),$$ induced by dualizing the canonical morphism $\mc O_X\rightarrow \O0,$ is injective on cohomology, i.e., $\sExt^i(\O0, \omega_X) \hookrightarrow \sExt^i(\ms O_X, \omega_X)$ for all $i$. In other words, $\omega_X^{DB}$ is a subsheaf of $\omega_X$.

We conclude this section with a result that will be necessary in order to apply our main theorem.
\begin{proposition}\label{prop: omegaDB sheaf}
For any closed point $p \in X$ we have $\Hom_{D^b(X)}(\ms O_p, \O0[1]) = 0$.
\end{proposition}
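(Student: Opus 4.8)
The plan is to reduce the statement to the vanishing of a negative $\Ext$ group by applying Grothendieck--Serre duality. Since $X$ is projective and normal it is Cohen--Macaulay of pure dimension $2$, so it admits a dualizing sheaf $\omega_X$ with dualizing complex $\omega_X^\bullet \cong \omega_X[2]$, and the duality functor $\DD = \RHom(\blank, \omega_X)$ from Section~\ref{sec:DB} is a contravariant autoequivalence of $D^b(X)$ with $\DD\circ\DD\cong\mathrm{id}$ (as is standard for Cohen--Macaulay schemes with a dualizing sheaf). Hence
$$\Hom_{D^b(X)}(\ms O_p, \O0[1]) \;\cong\; \Hom_{D^b(X)}\bigl(\DD(\O0[1]),\, \DD(\ms O_p)\bigr),$$
and I would compute the two arguments on the right separately.

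For $\DD(\ms O_p)$: writing $\iota\colon \{p\}\hookrightarrow X$ for the inclusion of the closed point, Grothendieck duality for the finite morphism $\iota$ gives $\RHom(\ms O_p, \omega_X^\bullet) \cong \iota_*\bigl(\iota^!\omega_X^\bullet\bigr) \cong \iota_*\, k(p) = \ms O_p$, because $\iota^!$ of the dualizing complex of $X$ is the dualizing complex of the reduced point, which over $k$ is just $k(p)$ in degree $0$; shifting by $[-2]$ then gives $\DD(\ms O_p) = \RHom(\ms O_p, \omega_X) \cong \ms O_p[-2]$. (At a smooth point this is visibly the dual Koszul complex, and in general it also follows from local duality on the Cohen--Macaulay ring $\ms O_{X,p}$.) For $\DD(\O0[1])$: directly from the definition of $\omega_X^{DB}$ we get
$$\DD(\O0[1]) = \RHom(\O0, \omega_X)[-1] = \omega_X^{DB}[-1];$$
the crucial input here — established in Section~\ref{sec:DB} via the Kov\'acs--Schwede injectivity theorem — is that $\omega_X^{DB}$ is an honest coherent sheaf, so that $\DD(\O0[1])$ is concentrated in cohomological degree $1$.

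Putting these together, the right-hand side becomes
$$\Hom_{D^b(X)}\bigl(\omega_X^{DB}[-1],\, \ms O_p[-2]\bigr) = \Ext^{-1}_X(\omega_X^{DB}, \ms O_p),$$
which vanishes for the trivial reason that $\omega_X^{DB}$ and $\ms O_p$ are both coherent sheaves placed in degree $0$. This completes the argument; the only step demanding any care is the shift bookkeeping in Grothendieck duality, in particular the identification $\RHom(\ms O_p, \omega_X^\bullet) \cong \ms O_p$ and the normalization of dualizing complexes, but everything is formal once one knows $\omega_X^{DB}$ is a sheaf.

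As a cross-check — and an approach avoiding $\iota^!$ — one can instead use the distinguished triangle $\ms O_X \to \O0 \to \mc H^1(\O0)[-1] \xrightarrow{+1}$, valid because $\O0$ has cohomology only in degrees $0$ and $1$ with $\mc H^0(\O0) = \ms O_X$. Applying $\Hom_{D^b(X)}(\ms O_p, \blank)$ to the shift of this triangle and using $\Ext^1_X(\ms O_p, \ms O_X) = 0$ (true because $\ms O_{X,p}$ has depth $2$) identifies $\Hom_{D^b(X)}(\ms O_p, \O0[1])$ with the kernel of the connecting map $\Hom_X(\ms O_p, \mc H^1(\O0)) \to \Ext^2_X(\ms O_p, \ms O_X)$. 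The main obstacle along this route is to show that connecting map is injective; one can do this by dualizing the triangle and recognizing the connecting class as the one exhibiting the cokernel of $\omega_X^{DB}\hookrightarrow\omega_X$ as the dual of $\mc H^1(\O0)$. Since this injectivity is automatic in the first route, I would adopt the Grothendieck-duality argument.
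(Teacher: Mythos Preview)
Your proposal is correct and takes essentially the same approach as the paper: both arguments reduce to the vanishing of a negative $\Ext$ between the sheaves $\omega_X^{DB}=\DD(\O0)$ and $\ms O_p$, using the identification $\RHom(\ms O_p,\omega_X)\cong\ms O_p[-2]$ together with the Kov\'acs--Schwede input that $\omega_X^{DB}$ is a sheaf. The only cosmetic difference is that the paper unwinds the duality via the $\dt$/$\RHom$ adjunction, whereas you invoke directly that $\DD$ is a contravariant involution; the resulting computation is identical.
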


\begin{proof}

We note first that $\RHom(\ms O_p, \omega_X) = \ms O_p[-2]$, as $\RHom(\ms O_p, \omega_X)$ is supported on $p$ and so by the local-to-global spectral sequence it suffices to compute
$\Ext^i(\ms O_p, \omega_X) = H^{2-i}(X, \ms O_p)^*$. Then
$$\Hom(\ms O_p, \O0[1]) = \Hom(\ms O_p, \RHom(\DD(\O0), \omega_X)[1]) = \Hom(\ms O_p \dt \DD(\O0), \omega_X[1]) $$
$$= \Hom(\DD(\O0), \RHom(\ms O_p, \omega_X)[1]) = \Hom(\DD(\O0), \ms O_p[-1]) = 0$$
by vanishing of negative Exts since $\DD(\O0)$ is a sheaf, as discussed above.
\end{proof}

We note that this proposition cannot be deduced purely from the cohomology sheaves $\mc H^i(\O0)$ but requires the additional input that $\DD(\O0)$ is a sheaf; indeed, the statement would be false in the non-Du Bois case if $\O0$ were replaced by $\mc H^0(\O0) \oplus \mc H^1(\O0)[-1]$.

\section{Bridgeland stability}\label{sec: Bridgeland}
In this section we prove several lemmas about Bridgeland stability of elements of $D^b(X)$ whose cohomologies match those of the objects we will consider. Namely,  we study the stability of a class of objects with properties matching those of $\ms O_X[1]$ and $\O0[1]$ (``type $O$") and of $L \otimes \mc I_Z$ for $L$ an ample line bundle and $Z \subset X$ a subscheme of dimension 0. We also analyze the image of a morphism from an object $L \otimes I_Z$ to an object of type $O$.

\subsection{Conditions for stability}
In this section, we derive inequalities that would be satisfied if an object of one of the two forms of interest were not Bridgeland stable, following the proof of \cite{arcara2009reider} with the necessary modifications for the non-smooth case.
\subsubsection{Objects of type $O$} 

The following definition captures the essential properties of $\ms O_X[1]$ and $\O0[1]$.
\begin{definition}\label{def: type O}
We say an object $F \in D^b(X)$ is \emph{of type} $O$ if $\Hom(\ms O_p, F) = 0$ for every closed point $p \in X$ and
$$\mc H^i(F) = \begin{cases} \ms O_X & i = -1 \\ \ms O_T, \text{ for } T \subset X \textrm{ a zero-dimensional subscheme} & i = 0 \\ 0 & \textrm{else}.
\end{cases}$$
\end{definition}

\begin{example}
As explained in section \ref{sec:DB}, the object $\O0[1]$ is of type $O$. Because $\RHom(\ms O_X, \omega_X) = \omega_X$ is also a sheaf,
the argument of Proposition \ref{prop: omegaDB sheaf} shows that $\Hom(\ms O_p, \ms O_X[1]) = 0$ for any $p \in X$, so $\ms O_X[1]$ is also of type $O$. Another example is the shifted derived dual $\ms I_Z^\vee[1]$, where $Z \subset X$ is a 0-dimensional subscheme contained in the Gorenstein locus of $X$; this is the type of object considered (in the smooth case) in \cite{arcara2009reider}.
\end{example}

Fix some numerically ample $H \in A_1(X)_\RR$, and let $F \in D^b(X)$ be an element of type $O$.
Our goal will be to find a choice of $(t,s)$ such that $F$ is a Bridgeland stable element of $\Coh_{s,t}$.
\begin{proposition}
An object $F$ of type $O$ belongs to the heart $\Coh_{s,t}$ if and only if $s \ge 0$.
\end{proposition}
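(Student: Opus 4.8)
The plan is to invoke the standard cohomological description of a tilted heart: an object $E \in D^b(X)$ lies in $\Coh_{s,t} = \Coh_s$ if and only if $\mc H^i(E) = 0$ for $i \notin \{-1, 0\}$, $\mc H^{-1}(E) \in \mc F_{tH, sH}$, and $\mc H^0(E) \in \mc T_{tH, sH}$. Since $(\omega, B) = (tH, sH)$ gives $\mu_\omega(E) = t\,\mu_H(E)$ and $B \cdot \omega = st H^2$, the inequality ``$\mu_\omega > B\cdot\omega$'' (resp. ``$\le$'') defining the torsion pair is equivalent to ``$\mu_H > sH^2$'' (resp. ``$\le$''), independently of $t$; write $\mu := \mu_H$. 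By definition an object $F$ of type $O$ already has $\mc H^i(F) = 0$ for $i \notin \{-1, 0\}$, so it remains only to check the two conditions on $\mc H^{-1}(F) = \ms O_X$ and $\mc H^0(F) = \ms O_T$.

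For $\mc H^0(F) = \ms O_T$ with $T$ zero-dimensional: this is a torsion sheaf, so $\Ch_0(\ms O_T) = 0$ and every subsheaf is again torsion, whence $\ms O_T$ is $\mu$-semistable with unique HN factor itself, of slope $\mu = +\infty > sH^2$ for every $s$; thus $\ms O_T \in \mc T_{tH, sH}$ unconditionally. For $\mc H^{-1}(F) = \ms O_X$: since $X$ is integral, $\ms O_X$ is a rank-one torsion-free sheaf, so any nonzero subsheaf $E' \subseteq \ms O_X$ has rank one with quotient $\ms O_X / E'$ supported in dimension $\le 1$; hence $\Ch_1(E') \cdot H \le \Ch_1(\ms O_X) \cdot H$, because $H$ is ample and meets any effective $1$-cycle nonnegatively. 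As $\Ch_1(\ms O_X) = c_1(\ms O_X) = 0$, we conclude $\mu(E') \le 0 = \mu(\ms O_X)$, so $\ms O_X$ is $\mu$-semistable with unique HN factor itself, of slope $0$. Therefore $\ms O_X \in \mc F_{tH, sH}$ if and only if $0 \le sH^2$, which, since $H^2 > 0$, holds if and only if $s \ge 0$.

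Combining the two cases, $F \in \Coh_{s,t}$ precisely when $s \ge 0$; for the ``only if'' direction one uses that when $s < 0$ the sheaf $\ms O_X$ lies in $\mc T_{tH,sH}$ rather than $\mc F_{tH,sH}$, which by the cohomological description of the heart forces $F \notin \Coh_{s,t}$. I expect no real obstacle here beyond bookkeeping: the only points requiring a moment's care are the reduction of the $(\omega,B)$-slope inequalities to plain $H$-slope inequalities, and the fact that on a possibly singular normal surface $\ms O_X$ is still $\mu$-semistable with $\Ch_1 = 0$ — both of which hold because Langer's $\Ch_1$ restricts to the usual first Chern class on line bundles, and the semistability argument uses only torsion-freeness of $\ms O_X$ together with ampleness of $H$.
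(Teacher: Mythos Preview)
Your proof is correct and follows essentially the same approach as the paper: both reduce to checking that $\ms O_T \in \mc T_{s,t}$ unconditionally (as a torsion sheaf) and that $\ms O_X \in \mc F_{s,t}$ if and only if $s \ge 0$ (since $\ms O_X$ is rank one, hence slope-semistable with $\mu_H = 0$). Your write-up is slightly more detailed—spelling out the rank-one semistability argument and the ``only if'' direction explicitly—but the substance is the same.
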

\begin{proof}
We note that $\mc H^0(F) = \ms O_T$ is torsion, therefore is contained in $\mc T_{s,t}$ for any choice of $s$ and $t$. So it suffices to see that $\mc H^{-1}(F) = \ms O_X \in \mc F_{s,t}$. Since $\ms O_X$ is of rank 1, and thus is automatically $H$-stable, the necessary condition is that
$$tH \cdot sH = stH^2 \ge \mu_H(\ms O_X) = \frac{\Ch_1(\ms O_X) \cdot tH}{\Ch_0(\ms O_X)} = 0,$$
and since necessarily $t > 0$, this amounts to the condition $s \ge 0$.
\end{proof}
We now explore the implications of $F$ being Bridgeland unstable.

\begin{proposition}\label{prop:rk ineq}
Fix $(t,s) \in \RR^{>0} \times \RR^{\ge 0}$. If $F \in \Coh_{s,t}$ is destabilized by some quotient $Q^\bullet$, then for any $H$-slope HN factor $E$ of $\mc H^{-1}(Q^\bullet)$ with Bridgeland rank $r_{s,t}(E) \ne -r_{s,t}(F)$, we have that
$$(r-1) sH^2 < A \cdot H \le rsH^2,$$
where we write $r := c_0(E)$ and $A := \Ch_1(E)$.
\end{proposition}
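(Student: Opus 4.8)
The plan is to convert the claimed double inequality into a statement purely about the Bridgeland rank $r_{s,t}$, and then read it off from additivity of $r_{s,t}$ and its nonnegativity on the heart. Since $F$ is of type $O$ we have $\Ch_0(F) = -1$ and $\Ch_1(F) = 0$, so $r_{s,t}(F) = stH^2$; and for $E$ with $r := \Ch_0(E)$ and $A := \Ch_1(E)$ one computes $r_{s,t}(E) = t(A\cdot H - rsH^2)$. A one-line manipulation then gives the equivalences $A\cdot H \le rsH^2 \Longleftrightarrow r_{s,t}(E)\le 0$ and $A\cdot H \ge (r-1)sH^2 \Longleftrightarrow r_{s,t}(E) \ge -r_{s,t}(F)$, with the latter an equality precisely when $r_{s,t}(E) = -r_{s,t}(F)$. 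So, given the hypothesis $r_{s,t}(E)\ne -r_{s,t}(F)$, it suffices to prove the two non-strict bounds $-r_{s,t}(F) \le r_{s,t}(E) \le 0$; the strictness in $(r-1)sH^2 < A\cdot H$ then comes for free from the hypothesis.

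For the upper bound $r_{s,t}(E)\le 0$: the destabilization provides a short exact sequence $0\to S\to F\to Q^\bullet\to 0$ in $\Coh_{s,t}$, and since $\Coh_{s,t}$ is the tilt of $\Coh(X)$ at the torsion pair $(\mc T_{s,t},\mc F_{s,t})$, the sheaf $G := \mc H^{-1}(Q^\bullet)$ lies in $\mc F_{s,t}$. Hence $G$ is torsion-free and, by definition of $\mc F_{s,t}$, each of its $H$-slope HN factors $E$ has rank $r\ge 1$ and satisfies $A\cdot H \le rsH^2$, i.e. $r_{s,t}(E)\le 0$ (equivalently $r_{s,t}(E[1])\ge 0$).

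For the lower bound I would chain three applications of ``$r_{s,t}\ge 0$ on $\Coh_{s,t}$'', using that $r_{s,t}$ is additive on short exact sequences. First, the $H$-slope HN filtration $0\subset G_1\subset\cdots\subset G_k=G$ (whose steps are again in $\mc F_{s,t}$, as $\mc F_{s,t}$ is closed under subsheaves) induces a filtration of $G[1]$ in $\Coh_{s,t}$ with subquotients the shifts $E_j[1]$ of the HN factors, each $E_j[1]\in\mc F_{s,t}[1]\subset\Coh_{s,t}$; since every $r_{s,t}(E_j[1])\ge 0$, additivity gives $r_{s,t}(E[1]) \le \sum_j r_{s,t}(E_j[1]) = r_{s,t}(G[1])$. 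Second, the canonical sequence $0\to G[1]\to Q^\bullet\to \mc H^0(Q^\bullet)\to 0$ with $\mc H^0(Q^\bullet)\in\mc T_{s,t}\subset\Coh_{s,t}$ gives $r_{s,t}(G[1])\le r_{s,t}(Q^\bullet)$. Third, $0\to S\to F\to Q^\bullet\to 0$ with $r_{s,t}(S)\ge 0$ gives $r_{s,t}(Q^\bullet)\le r_{s,t}(F)$. Concatenating, $r_{s,t}(E[1])\le r_{s,t}(F)$, i.e. $r_{s,t}(E)\ge -r_{s,t}(F)$; together with $r_{s,t}(E)\ne -r_{s,t}(F)$ this yields the strict inequality $(r-1)sH^2 < A\cdot H$.

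This argument is short, and in fact the instability of $F$ enters only to guarantee an honest short exact sequence in $\Coh_{s,t}$ — the rank bounds hold for any proper quotient object. The only points I would be careful to justify in the present normal, arbitrary-characteristic setting are the structural facts recalled in Section \ref{sec:preliminaries}: that $\Coh_{s,t}$ is genuinely a tilt, so $\mc H^{-1}(Q^\bullet)\in\mc F_{s,t}$; that $\mc F_{s,t}$ is closed under subsheaves, so each step of the HN filtration of $G$ stays in $\mc F_{s,t}$; and that $r_{s,t}\ge 0$ on every object of the heart. None of these is a real obstacle, so the only ``hard part'' is the bookkeeping of which nonnegativity is being invoked where.
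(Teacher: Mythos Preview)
Your proof is correct and matches the paper's own proof essentially line for line: both arguments boil down to the chain $0 \le r_{s,t}(E[1]) \le r_{s,t}(Q^\bullet) \le r_{s,t}(F)$ obtained from additivity and nonnegativity of Bridgeland rank on the heart, with strictness coming from the hypothesis $r_{s,t}(E) \ne -r_{s,t}(F)$. Your write-up is somewhat more explicit about why each link in the chain holds (the HN filtration step, the $\mc H^{-1}[1] \hookrightarrow Q^\bullet$ step, and the kernel step), but there is no real difference in content.
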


\begin{proof}
Suppose we have a destabilizing short exact sequence
$$0 \rightarrow K^\bullet \rightarrow F \rightarrow Q^\bullet \rightarrow 0$$
in $\Coh_{s,t}$. Then since Bridgeland rank is additive in short exact sequences and nonnegative on $\Coh_{s,t}$, we have that
$$0 \le r_{s,t}(E[1]) \le r_{s,t}(Q^\bullet) \le r_{s,t}(F)$$
for $E$ as described above, as $Q^\bullet$ is a subobject of $F$ in $\Coh_{s,t}$ and can be built up from a series of extensions of $\mc H^0(Q^\bullet)$ and $\{E_i[1]\}$, where the $E_i$'s run over the $H$-slope HN factors of $\mc H^{-1}(Q^\bullet)$. Thus, using our assumption, we get
$$0 \le r_{s,t}(E[1]) = - tH \cdot (A - srH) < r_{s,t}(F) = stH^2 \implies 
srH^2 \ge H \cdot A > s(r-1) H^2.$$
\end{proof}

Define $l_T := \Length(\mc H^0(F))$. We start by noting that $\Ch(F) = \Ch(\ms O_T) - \Ch(\ms O_X)$, where $\ms O_X$ is a line bundle and therefore Langer's Chern character is the standard one. On the other hand, we have $\Ch_0(\ms O_T)=\Ch_1(\ms O_T) = 0$ and so $\int_X \Ch_2(\ms O_T) = \chi(X, \ms O_T) = l_T$ by \cite{langer2022intersection}. 

In the following proposition, we will restrict attention to one particular stability condition, chosen so that $s = \frac{1}{2}$ and $\nu_{s,t}(F) = 0$ (taking $l = l_T$).

\begin{proposition}\label{prop:deg ineq}
Suppose $H^2 > 4(2l + C_X)$, and fix $s := \frac{1}{2}, t := \sqrt{\frac{1}{4} - \frac{2l + C_X}{H^2}}$. Suppose $E$ is an $H$-slope semistable sheaf with $r_{s,t}(E) \le 0$ such that $\nu_{s,t}(E) \le 0$. Then
$$A \cdot H \le \frac{A^2}{r} + r(C_X + 2l),$$
defining $A := \Ch_1(E)$ and $r := \Ch_0(E)$ as before.
\end{proposition}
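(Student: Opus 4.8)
The plan is to extract the desired inequality from the hypothesis $\nu_{s,t}(E) \le 0$ by unwinding the definitions of Bridgeland rank and degree at the specific point $(s,t) = (1/2, \sqrt{1/4 - (2l+C_X)/H^2})$, and then invoking the Bogomolov inequality for $H$-slope semistable sheaves. Since the Bridgeland slope is $\nu_{s,t}(E) = d_{s,t}(E)/r_{s,t}(E)$ and we are told $r_{s,t}(E) \le 0$, the condition $\nu_{s,t}(E) \le 0$ rearranges (being careful with the sign of $r_{s,t}(E)$) to an inequality relating $d_{s,t}(E)$ and $r_{s,t}(E)$; if $r_{s,t}(E) < 0$ this gives $d_{s,t}(E) \ge 0$, and the boundary case $r_{s,t}(E) = 0$ must be handled using the structure of $\Coh_{s,t}$ (where $r_{s,t} = 0$ forces $d_{s,t} > 0$).

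**Main computation.** Using the recorded formulas
$$r_{s,t}(E) = tH \cdot (A - s\Ch_0(E) H) = tH\cdot A - \tfrac{t}{2} r H^2$$
and
$$d_{s,t}(E) = \Ch_2(E) - s\, A \cdot H + r\,\frac{(s^2 - t^2)H^2 - C_X}{2},$$
I would substitute $s = 1/2$ and note that the choice of $t$ is exactly calibrated so that $(s^2 - t^2)H^2 - C_X = \tfrac{1}{4}H^2 - (\tfrac14 H^2 - (2l+C_X)) - C_X = 2l$, hence $d_{s,t}(E) = \Ch_2(E) - \tfrac12 A\cdot H + r l$. Feeding $d_{s,t}(E) \ge 0$ (from the slope condition, or $> 0$ in the boundary case) then gives $\tfrac12 A \cdot H \le \Ch_2(E) + rl$. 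At this point I bring in the Bogomolov inequality $\int_X \Ch_1(E)^2 - 2\Ch_0(E)\Ch_2(E) + C_X \Ch_0(E)^2 \ge 0$, valid since $E$ is $H$-slope semistable (one should first reduce to the torsion-free case, or note that $r_{s,t}(E) \le 0$ together with $E$ being a nonzero subobject-type sheaf forces $r = \Ch_0(E) > 0$, so $E$ is actually a torsion-free sheaf). This yields $\Ch_2(E) \le \tfrac{A^2}{2r} + \tfrac{C_X}{2} r$. Combining, $\tfrac12 A \cdot H \le \tfrac{A^2}{2r} + \tfrac{C_X}{2}r + rl$, i.e. $A \cdot H \le \tfrac{A^2}{r} + r(C_X + 2l)$, which is the claim.

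**Anticipated obstacles.** The genuinely delicate points are (i) justifying that $r = \Ch_0(E) > 0$ so that division by $r$ is legitimate and the Bogomolov inequality applies in the torsion-free semistable form — here the hypothesis $r_{s,t}(E) \le 0$ is what rules out $r < 0$ (an $H$-slope semistable sheaf with the relevant sign of Bridgeland rank), and one must separately dispense with the torsion case where $r = 0$ makes the statement vacuous or trivially handled; and (ii) the sign bookkeeping in passing from $\nu_{s,t}(E) \le 0$ with $r_{s,t}(E) \le 0$ to $d_{s,t}(E) \ge 0$, including the degenerate stratum $r_{s,t}(E) = 0$ where one uses the positivity of $d_{s,t}$ on rank-zero objects of the heart. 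The arithmetic verification that the prescribed $t$ makes $\nu_{s,t}(F) = 0$ and produces the clean constant $2l$ is routine but should be stated, since it is the whole reason this particular stability condition was selected. I expect step (i) to be the main conceptual obstacle; everything else is a direct substitution.
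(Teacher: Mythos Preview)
Your proposal is correct and follows essentially the same approach as the paper: from $\nu_{s,t}(E)\le 0$ with $r_{s,t}(E)\le 0$ deduce $d_{s,t}(E)\ge 0$, verify that the choice of $t$ makes $(s^2-t^2)H^2-C_X=2l$, and then apply the Bogomolov-type inequality $2r\Ch_2(E)\le A^2+C_Xr^2$ to conclude. The paper's proof is in fact terser than yours and does not pause over the points you flag in (i) and (ii); in particular it silently assumes $r>0$ and torsion-freeness, which are justified only by the context in which the proposition is used (where $E$ is an HN factor of a torsion-free sheaf $\mc H^{-1}(Q^\bullet)$).
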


\begin{proof}
The slope inequality $\nu_{s,t}(E) \le 0$ with denominator $r_{s,t}(E) \le 0$ implies that
the numerator
$$d_{s,t}(E) = \Ch_2(E) - sA \cdot H + r \frac{(s^2-t^2)H^2 - C_X}{2} \ge 0$$
$$\implies A \cdot H \le 2\Ch_2(E) + 2rl.$$
Now, by Langer's Bogomolov-type inequality \cite[Theorem 0.1]{langer2024bridgeland}, we have
$$2r\Ch_2(E) \le A^2 + C_Xr^2 \implies A \cdot H \le \frac{A^2}{r} + C_X r + 2rl.$$
\end{proof}

We now combine the previous two propositions to deduce a contradiction if $H$ is sufficiently positive.

\begin{lemma}\label{lemma:Hodge}
Let $H=c_1(L),$ where $L$ is an ample line bundle such that
 $H^2 > (C_X + 2l+1)^2$. 
Given $A \in A_1(X), r \in \ZZ_{\ge 1}$ satisfying
$$ \frac{r-1}{2} H^2 < A \cdot H \le \frac{r}{2} H^2, \,\, A \cdot H \le \frac{A^2}{r} + r(C_X + 2l),$$
we must have either $r = 1$ and $0 < A \cdot H < C_X + 2l+1$, or $r \ge 3$ and $H^2 < 3(C_X + 2l + 1)$ (so in particular $l = 0$ and $C_X < 2$). In the case $r=1$, the same holds when $H$ is any numerically ample $\RR$-divisor.
\end{lemma}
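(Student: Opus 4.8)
The plan is to work with the two hypotheses
$$\tfrac{r-1}{2}H^2 < A\cdot H \le \tfrac r2 H^2 \qquad\text{and}\qquad A\cdot H \le \tfrac{A^2}{r} + r(C_X+2l)$$
and to play them against the Hodge index theorem on the (singular) surface $X$, which is available by Observations \ref{obs:int}(3). First I would record what Hodge index gives here: since $H$ is ample (in particular $H^2>0$), for the class $A$ we have $(A\cdot H)^2 \ge A^2 H^2$, i.e. $A^2 \le (A\cdot H)^2/H^2$. Substituting this into the second inequality yields a bound on $A\cdot H$ purely in terms of $r$, $H^2$, and $C_X+2l$:
$$A\cdot H \le \frac{(A\cdot H)^2}{rH^2} + r(C_X+2l).$$
Writing $x := A\cdot H$ and $D := C_X+2l$, this is the quadratic inequality $x^2 - rH^2 x + r^2 H^2 D \ge 0$ (after multiplying by $rH^2>0$), so $x$ must lie outside the open interval between the two roots $\frac{rH^2}{2}\bigl(1 \pm \sqrt{1 - 4D/H^2}\bigr)$ — here I would first note the roots are real because $H^2 > (D+1)^2 \ge 4D$ (for $D\ge 0$; if $D=0$ they're real trivially), so $4D/H^2<1$.

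**The dichotomy from the two root regimes.** Combining the first hypothesis $x > \frac{r-1}{2}H^2$ with "$x$ outside the root interval" forces one of two cases. Either $x$ is below the smaller root, i.e. $x \le \frac{rH^2}{2}\bigl(1-\sqrt{1-4D/H^2}\bigr)$; or $x$ is above the larger root, i.e. $x \ge \frac{rH^2}{2}\bigl(1+\sqrt{1-4D/H^2}\bigr)$, but the latter combined with the \emph{upper} bound $x\le \frac r2 H^2$ from the first hypothesis is impossible (the larger root exceeds $\frac r2 H^2$ since the square root is positive). So we are always in the first case: $x \le \frac{rH^2}{2}\bigl(1-\sqrt{1-4D/H^2}\bigr)$. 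Now I would combine this upper bound with the lower bound $x > \frac{r-1}{2}H^2$ to get
$$\frac{r-1}{2}H^2 < \frac{rH^2}{2}\Bigl(1-\sqrt{1-\tfrac{4D}{H^2}}\Bigr),$$
i.e. $r-1 < r\bigl(1-\sqrt{1-4D/H^2}\bigr)$, i.e. $r\sqrt{1-4D/H^2} < 1$, i.e. $r^2(1-4D/H^2) < 1$, i.e. $r^2 H^2 - 4r^2 D < H^2$, i.e. $(r^2-1)H^2 < 4r^2 D$.

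**Casework on $r$.** From $(r^2-1)H^2 < 4r^2 D$: if $r=1$ this is vacuous, and then the first hypothesis already gives $0 \le \frac{r-1}{2}H^2 < A\cdot H$, and I need the upper bound $A\cdot H < D+1 = C_X+2l+1$; this should follow from feeding $x\le \frac{H^2}{2}(1-\sqrt{1-4D/H^2})$ through the hypothesis $H^2 > (D+1)^2$, via the elementary estimate $\frac{H^2}{2}(1-\sqrt{1-4D/H^2}) = \frac{2D}{1+\sqrt{1-4D/H^2}} \cdot \frac{H^2}{H^2}$... more precisely $\frac{H^2}{2}(1-\sqrt{1-4D/H^2}) < D\cdot\frac{H^2}{H^2 - \text{something}}$; I'd verify that the hypothesis $H^2>(D+1)^2$ makes this strictly less than $D+1$. (Note this $r=1$ argument used only $H^2>(D+1)^2$ and Hodge index, both valid for any numerically ample $\RR$-divisor, giving the last sentence of the lemma.) If $r=2$: then $3H^2 < 16D$, but $H^2 > (D+1)^2 \ge 4D$ gives $3H^2 > 12D$, contradiction unless $D$ is small — I'd check $12D < 3H^2 < 16D$ forces $D\ge 1$ and then $(D+1)^2 < H^2 < \frac{16}{3}D$, and confirm this has no solution in the relevant range (for $D\ge 1$, $(D+1)^2 \ge 4D > \frac{16}{3}D$ fails... need care: $4D$ vs $\frac{16}{3}D$: $\frac{16}{3}>4$, so this needs the sharper $(D+1)^2$; $(D+1)^2 < \frac{16}{3}D \iff 3D^2 - 10D + 3 < 0 \iff \frac13 < D < 3$, so $D\in\{1,2\}$, and then I must rule these out by a finer integer argument — likely using that $A\cdot H$ and $H^2$ are constrained, or that $r=2$ with these bounds forces $x$ into an empty window). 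If $r\ge 3$: then $8H^2 \le (r^2-1)H^2 < 4r^2 D \le 4r^2 D$, hmm I'd rather extract $H^2 < \frac{4r^2}{r^2-1}D \le \frac{9}{2}D$ for $r\ge3$... but the claim is $H^2 < 3(D+1)$. Here I expect to need the first inequality again more carefully rather than just Hodge index — the claim $H^2 < 3(C_X+2l+1)$ and the deduction "$l=0$ and $C_X<2$" should come from combining $H^2 < 3(D+1)$ with $H^2 > (D+1)^2$, giving $(D+1)^2 < 3(D+1)$, so $D+1 < 3$, so $D \le 1$, i.e. $2l + C_X \le 1$, forcing $l=0$ and $C_X \le 1 < 2$.

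**The main obstacle.** The genuinely delicate part is the $r=2$ (and possibly small $r\ge 3$) casework: the crude Hodge-index bound $A^2 \le (A\cdot H)^2/H^2$ may not by itself close the window for $r=2$, $D\in\{1,2\}$, and I anticipate needing to use that $A\cdot H$ is a \emph{rational number with bounded denominator} (Observations \ref{obs:int}(1),(4)) together with the strict inequalities, or else sharpen the estimate by not discarding the gap between $A^2$ and $(A\cdot H)^2/H^2$. A cleaner route, which I'd try first, is to re-derive everything symmetrically: from $\frac{r-1}{2}H^2 < x$ and $x \le \frac{x^2}{rH^2} + rD$ directly, treat it as $x^2 \ge rH^2 x - r^2H^2 D$ and just verify numerically that for $r=2$ and $D\in\{1,2\}$ no $x$ with $\frac{r-1}{2}H^2 < x \le \frac r2 H^2$ and $H^2 > (D+1)^2$ can satisfy it — this is a finite check once one parametrizes by $H^2$. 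That bookkeeping, rather than any conceptual point, is where the real work lies; the conceptual engine is just Hodge index plus the quadratic discriminant.
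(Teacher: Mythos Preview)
Your approach is sound and rests on the same engine as the paper's (the Hodge index theorem, plus integrality of $A\cdot H$ and $H^2$ for the $r=2$ case), but the paper organizes the argument differently and more cleanly: setting $D := A/r$, it first proves $D^2 < 1$ \emph{uniformly in $r$} by combining Hodge index $D^2 H^2 \le (D\cdot H)^2$ with the two bounds $D\cdot H \le D^2 + (C_X+2l)$ and $D\cdot H \le H^2/2$ to get $D^2 \le C_X+2l$ and $H^2 \le (\sqrt{D^2} + (C_X+2l)/\sqrt{D^2})^2$, which for $1 \le D^2 \le C_X+2l$ contradicts $H^2 > (C_X+2l+1)^2$. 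This immediately gives the $r=1$ conclusion ($A\cdot H \le A^2 + (C_X+2l) < C_X+2l+1$), which is exactly the verification you left open; your own formula $x \le 2(C_X+2l)/(1+\sqrt{1-4(C_X+2l)/H^2})$ does yield it too, since the inequality rearranges (after squaring, for $C_X+2l \ge 1$) precisely to $H^2 > (C_X+2l+1)^2$. Your $r\ge 3$ anxiety is misplaced: your bound $(r^2-1)H^2 < 4r^2(C_X+2l)$ already gives $H^2 < \tfrac{9}{2}(C_X+2l)$, and combining with $(C_X+2l+1)^2 < H^2$ forces $C_X+2l < 2$, whence $\tfrac{9}{2}(C_X+2l) < 3(C_X+2l+1)$ automatically --- so you \emph{do} recover the lemma's stated bound without further work. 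For $r=2$ your instinct is right that integrality is essential, but your ``finite check'' is not actually finite (since $C_X$ varies continuously); the paper instead uses that $A\cdot H, H^2 \in \ZZ$ to bump $D\cdot H = A\cdot H/2$ up to at least $C_X+2l+1$, whence $D^2 \ge D\cdot H - (C_X+2l) \ge 1$, contradicting $D^2 < 1$. The paper's $D^2 \lessgtr 1$ dichotomy thus avoids your root analysis entirely and handles all $r$ at once.
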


\begin{proof}
Define $D := A/r$. We start by showing that in the case $r > 1$ we must have $D^2 \ge 1$. Indeed, suppose first that $r\geq 3.$
Then by our assumptions we have 
$$D^2 + C_X + 2l \ge D \cdot H > \frac{r-1}{2r} H^2 \geq \frac{H^2}{3} > C_X + 2l+1$$ 
unless $H^2 \le 3(C_X + 2l + 1)$.
When $r=2$, we also use the assumption that $H$ is (integral) Cartier: recalling from \ref{obs:int}(4) that $H^2$ and $A\cdot H$ are integers, we have $$D \cdot H >  \frac{H^2}{4}> \frac{4(C_X+2l)+1}{4}$$ which implies $\frac{H^2}{4}\geq C_X+2l+\frac 12$ and $D \cdot H \geq C_X+2l +1,$ and so as above $D^2 \ge 1$.

We now show that our assumption on $H^2$ implies that $D^2 < 1$:
recalling from \ref{obs:int}(3) that the intersection product on $X$ satisfies the Hodge index theorem, we have
$$D^2 H^2 \le (D \cdot H)^2 \le (D^2 + C_X + 2l) \frac{H^2}{2} \implies D^2 \le C_X + 2l$$
and
$$D^2 H^2 \le (D \cdot H)^2 \le (D^2 + C_X + 2l)^2 \implies H^2 \le \left(\sqrt{D^2} + \frac{C_X + 2l}{\sqrt{D^2}}\right)^2.$$
If we had $D^2 \ge 1$, using that $D^2 \le C_X + 2l$ we would conclude that
$$H^2 \le (C_X + 2l+1)^2.$$
However, this contradicts our assumption on $H$, so we conclude that in fact $D^2 < 1$, which implies by the above that $r=1.$

It follows that
$$A \cdot H \le A^2 + C_X + 2l < C_X + 2l+1.$$
\end{proof}

We now describe conditions under which an object $F$ is Bridgeland stable.

\begin{proposition}\label{prop: O}
Let $F \ne O_X[1] \in D^b(X)$ be of type $O$, and suppose $H=c_1(L),$ where $L$ is an ample line bundle such that
$H^2 > (C_X + 2l_T+1)^2$ 
and $H \cdot A_1(X) \subset M \ZZ$ for some $M \ge C_X + 2l_T+1$.
Then $F$ is Bridgeland stable for the stability condition $(Z_{s,t}, \Coh_{s,t})$ with $s = \frac{1}{2}, t = \sqrt{\frac{1}{4} - \frac{2l_T + C_X}{H^2}}$. (If $F = \ms O_X[1]$, the same holds with the additional bound $H^2 \ge 3(C_X + 1)$.)
\end{proposition}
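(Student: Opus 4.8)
The plan is to argue by contradiction: suppose $F$ of type $O$ is destabilized in $\Coh_{s,t}$ for the stated $(s,t)$, and run the destabilizing data through the three preparatory results (Propositions \ref{prop:rk ineq}, \ref{prop:deg ineq} and Lemma \ref{lemma:Hodge}) to derive an impossibility. Concretely, assume there is a destabilizing short exact sequence $0 \to K^\bullet \to F \to Q^\bullet \to 0$ in $\Coh_{s,t}$ with $\nu_{s,t}(Q^\bullet) \le \nu_{s,t}(F) = 0$. The first step is to pass from $Q^\bullet$ to a single $H$-slope HN factor $E$ of $\mathcal H^{-1}(Q^\bullet)$: since Bridgeland rank is additive and nonnegative, and $\nu_{s,t}(F)=0$, at least one such factor $E$ must have $r_{s,t}(E[1]) \ge 0$, i.e. $r_{s,t}(E) \le 0$, together with $\nu_{s,t}(E) \le 0$ (this is the standard seesaw manipulation — if every factor had strictly positive Bridgeland slope after shifting we could not get $\nu_{s,t}(Q^\bullet) \le 0$; one has to be slightly careful with the $r_{s,t}(E) = 0$ case, where the relevant inequality is on degrees). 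Setting $r := \Ch_0(E) \ge 0$ and $A := \Ch_1(E)$, I would first dispose of $r = 0$: then $E$ is a torsion sheaf, $r_{s,t}(E) = tH\cdot A$, and $r_{s,t}(E) \le 0$ forces $A \cdot H \le 0$, but $A$ is (a limit of) effective so $A \cdot H \ge 0$; pushing this through shows $E$ supported in dimension $0$, and such an $E[1]$ cannot be a subobject of $F$ in $\Coh_{s,t}$ compatibly with the Hom-vanishing $\Hom(\mathcal O_p, F) = 0$ — this is exactly where the extra axiom in Definition \ref{def: type O} is used.

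So we may assume $r \ge 1$. Now apply Proposition \ref{prop:rk ineq} (the hypothesis $r_{s,t}(E) \ne -r_{s,t}(F)$ is the generic case; the excluded case $r_{s,t}(E) = -r_{s,t}(F)$ means $r_{s,t}(E[1]) = r_{s,t}(F)$, so $r_{s,t}(K^\bullet) = 0$ and $K^\bullet$ has dimension-$0$ cohomology, which again contradicts $\Hom(\mathcal O_p,F)=0$ by the same argument, or else $K^\bullet = 0$ and $F = Q^\bullet$ is not destabilized) to get
$$\tfrac{r-1}{2} H^2 < A \cdot H \le \tfrac{r}{2} H^2,$$
and apply Proposition \ref{prop:deg ineq} to get $A \cdot H \le \frac{A^2}{r} + r(C_X + 2l_T)$. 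These are precisely the two hypotheses of Lemma \ref{lemma:Hodge} with $l = l_T$, whose hypothesis $H^2 > (C_X + 2l_T + 1)^2$ is assumed. The lemma forces one of two alternatives: either $r = 1$ with $0 < A \cdot H < C_X + 2l_T + 1$, or $r \ge 3$ with $H^2 < 3(C_X + 1)$ (and $l_T = 0$, $C_X < 2$). The second alternative is killed outright for $F \ne \mathcal O_X[1]$: I would note that in this sub-case $F$ has $\mathcal H^0(F) = \mathcal O_T$ with $l_T = 0$, hence $\mathcal H^0(F) = 0$ and $F = \mathcal O_X[1]$, contradicting the hypothesis $F \ne \mathcal O_X[1]$. (For $F = \mathcal O_X[1]$, this sub-case is precisely what the parenthetical extra hypothesis $H^2 \ge 3(C_X+1)$ rules out.)

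The remaining case is $r = 1$ and $0 < A \cdot H < C_X + 2l_T + 1$. Here I would invoke the integrality hypothesis $H \cdot A_1(X) \subset M\ZZ$ with $M \ge C_X + 2l_T + 1$ (cf. Observations \ref{obs:int}(4), which guarantees such an $M$ exists for $H$ integral Cartier): since $A \cdot H$ is a positive integer multiple of $M$, we get $A \cdot H \ge M \ge C_X + 2l_T + 1$, contradicting $A \cdot H < C_X + 2l_T + 1$. This exhausts all cases, so no destabilizing quotient exists and $F$ is Bridgeland stable. The main obstacle I anticipate is not any single estimate — those are delivered by the three cited results — but rather the bookkeeping around the degenerate cases $r = 0$ and $r_{s,t}(E) = -r_{s,t}(F)$: making sure that each such degeneration genuinely contradicts the defining property $\Hom(\mathcal O_p, F) = 0$ of type $O$ objects (equivalently, that a dimension-$0$ sheaf placed in degree $-1$ or $0$ cannot appear as sub/quotient of $F$ in the tilted heart), and verifying that the HN-factor selection argument is valid when some factor has zero Bridgeland rank.
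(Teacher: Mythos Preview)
Your proposal is correct and follows essentially the same route as the paper: assume a destabilizing quotient $Q^\bullet$, extract an $H$-slope HN factor $E$ of $\mathcal H^{-1}(Q^\bullet)$ with $\nu_{s,t}(E)\le 0$ and $r_{s,t}(E)\ne -r_{s,t}(F)$, and feed $(r,A)$ through Propositions~\ref{prop:rk ineq}, \ref{prop:deg ineq} and Lemma~\ref{lemma:Hodge} to reach a contradiction with the integrality hypothesis on $H\cdot A_1(X)$.

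Two small cleanups. First, your $r=0$ case is vacuous: $E$ is by construction an HN factor of $\mathcal H^{-1}(Q^\bullet)\in\mathcal F_{s,t}$, which is torsion-free, so $r=\Ch_0(E)\ge 1$ automatically and there is nothing to check. Second, the seesaw step (reducing from $\nu_{s,t}(Q^\bullet)\le 0$ to some HN factor of $\mathcal H^{-1}(Q^\bullet)$ with $\nu_{s,t}(E)\le 0$) needs the observation that $\mathcal H^0(Q^\bullet)$ has Bridgeland rank $0$ and nonnegative degree; this is because the long exact sequence forces $\mathcal H^0(Q^\bullet)$ to be a quotient of $\mathcal H^0(F)=\mathcal O_T$, hence zero-dimensional. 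The paper makes this explicit; you should too, since without it the ``weighted average'' argument does not immediately land on $\mathcal H^{-1}(Q^\bullet)$. Your treatment of the degenerate case $r_{s,t}(E[1])=r_{s,t}(F)$ is correct in outline (this forces $r_{s,t}(K^\bullet)=0$, then the long exact sequence plus torsion-freeness of $\mathcal H^{-1}(K^\bullet)\hookrightarrow\mathcal O_X$ shows $K^\bullet=\mathcal H^0(K^\bullet)$ is zero-dimensional, contradicting $\Hom(\mathcal O_p,F)=0$), and matches the paper's argument.
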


\begin{proof}
Suppose $F \in \Coh_{s,t}$ is destabilized by a quotient $Q^\bullet$, i.e., $0 = \nu_{s,t}(F) \ge \nu_{s,t}(Q^\bullet)$. Our strategy will be to find an $H$-slope semistable factor $E$ of $\mc H^{-1}(Q^\bullet)$ with $r_{s,t}(E) \ne -r_{s,t}(F)$ and $\nu_{s,t}(E) \le \nu_{s,t}(F)$ and then to apply Lemma \ref{lemma:Hodge}.

We start by showing that our assumption that $\Hom(\ms O_p, F)$ for every closed point $p$ implies $r_{s,t}(Q^\bullet) < r_{s,t}(F)$ strictly: Suppose $r_{s,t}(Q^\bullet) = r_{s,t}(F)$, and consider the kernel object $K^\bullet$. We have the long exact sequence of cohomology
$$0 \rightarrow \mc H^{-1}(K^\bullet) \rightarrow \ms O_X \rightarrow \mc H^{-1}(Q^\bullet) \rightarrow \mc H^0(K^\bullet) \rightarrow \ms O_T \rightarrow \mc H^0(Q^\bullet) \rightarrow 0,$$
where all the cohomologies $\mc H^{-1}$ are by definition torsion free. This implies immediately that $\mc H^{-1}(K^\bullet) \cong 0$ or $\ms O_X$, and the latter is impossible as in this case
$$r_{s,t}(K^\bullet) = 0 = r_{s,t}(\mc H^0(K^\bullet)) - r_{s,t}(\mc H^{-1}(K^\bullet))$$
where $r_{s,t}(\mc H^0(K^\bullet)) \ge 0$ and $r_{s,t}(\ms O_X) = -stH^2 < 0$. So we conclude $\mc H^{-1}(K^\bullet) = 0$, and then $r_{s,t}(\mc H^0(K^\bullet)) = 0$ implies, by definition of $\Coh_{s,t}$, that $\mc H^0(K^\bullet)$ is supported in dimension 0 (as this can be decomposed into a torsion sheaf and torsion-free sheaves with $r_{s,t} > 0$, and any torsion components supported in dimension 1 would pair positively with the ample divisor $H$, again giving $r_{s,t} > 0$). Now, restricting the inclusion $K^\bullet \hookrightarrow F$ to any length one subsheaf $\ms O_p \subset \mc H^0(K^\bullet)$ gives a nonzero element of $\Hom(\ms O_p, F)$, a contradiction. Thus we have
$$-r_{s,t}(E) = r_{s,t}(E[1]) \le r_{s,t}(\mc H^{-1}(Q^\bullet)) = r_{s,t}(Q^\bullet) < r_{s,t}(F)$$
for any semistable factor $E$ of $\mc H^{-1}(Q^\bullet)$.

Now, recall that by definition of $\Coh_{s,t}$ and the long exact sequence of cohomology, we have that $\mc H^0(Q^\bullet)$ is a quotient of $\mc H^0(F) = \ms O_T$, and $\mc H^{-1}(Q^\bullet)$ has a filtration whose quotients are torsion-free $H$-slope semistable sheaves $E_i$ with $r_{s,t}(E_i) \le 0$. Then $\mc H^0(Q^\bullet)$, being supported in dimension 0, has Bridgeland rank 0 and nonnegative degree, and so $\nu_{s,t}(\mc H^{-1}(Q^\bullet)) \le \nu_{s,t}(Q^\bullet)$. Now, $\nu_{s,t}(\mc H^{-1}(Q^\bullet))$ is a weighted average of the $\nu_{s,t}(E_i)$, and so at least one of these, say, $E$, must have Bridgeland slope $\nu_{s,t}(E) \le \nu_{s,t}(\mc H^{-1}(Q^\bullet)) \le \nu_{s,t}(F) = 0$, and by the previous paragraph we have $r_{s,t}(E) \ne -r_{s,t}(F)$.

Finally, we can apply Propositions \ref{prop:rk ineq} and \ref{prop:deg ineq} to conclude that $r := c_0(E)$ and $A := \Ch_1(E)$ satisfy
$$\frac{r-1}{2} H^2 < A \cdot H \le \frac{r}{2} H^2, \,\, A \cdot H \le \frac{A^2}{r} + r(C_X + 2l_T).$$
Then by Lemma \ref{lemma:Hodge} we get $A \cdot H \le C_X + 2l_T+1$, but this is impossible by our assumption that $H \cdot A_1(X) \subset M \ZZ$ with $M > C_X + 2l_T+1$.
\end{proof}

In fact, to obtain the optimal bounds in our main theorem, we will use the following variant of the above proposition with a slightly weaker assumption on $H$ (which can now be an $\RR$-divisor, and the inequality in the bounds is no longer strict). This small change turns out to be necessary to obtain the optimal Fujita-type bounds, at least in the smooth case.

\begin{proposition}\label{prop: Ork1}
Let $F \in D^b(X)$ be of type $O$ and let $H$ be an numerically ample $\RR$-divisor such that $H^2 \ge (C_X + 2l_T + 1)^2$ 
and $H \cdot C \ge C_X + 2l_T + 1$ for any nonzero effective divisor $C$. Then $F$ is not strictly destabilized by an injection from a torsion sheaf in cohomological degree 0 with respect to the Bridgeland stability condition $(Z_{s,t}, \Coh_{s,t})$ for $s=\frac{1}{2}, t = \sqrt{\frac{1}{4} - \frac{2l_T + C_X}{H^2}}$ if $t > 0$ (i.e., in any case except $l_T = 0, C_X = 1, H^2 = 4$).
\end{proposition}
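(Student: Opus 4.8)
I read the statement as asserting that no monomorphism $\iota\colon G\hookrightarrow F$ in $\Coh_{s,t}$, with $G$ a nonzero torsion coherent sheaf (concentrated in degree $0$), can have $\nu_{s,t}(G) > \nu_{s,t}(F)$. Recall that for this $(s,t)$ one has $\nu_{s,t}(F)=0$ and $r_{s,t}(F)=\tfrac12 tH^2>0$ (as in the proof of Proposition \ref{prop: O}, with $l=l_T$), and that $t>0$ forces $H^2>4(C_X+2l_T)$, so that Propositions \ref{prop:rk ineq} and \ref{prop:deg ineq} are available. The plan is to argue by contradiction, essentially rerunning the proofs of Proposition \ref{prop: O} and Lemma \ref{lemma:Hodge} for a destabilizing \emph{subobject} that happens to be a torsion sheaf; the key point is to locate a strict inequality to compensate for the fact that the hypothesis on $H^2$ is now only $\ge$ rather than $>$.

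So suppose such a $\iota$ exists with $\nu_{s,t}(G)>0$. First, $G$ is pure of dimension $1$: a nonzero $0$-dimensional subsheaf of $G$ would give, via $\ms O_p\hookrightarrow G\hookrightarrow F$, a nonzero element of $\Hom(\ms O_p,F)$, contradicting that $F$ is of type $O$; hence $\Ch_0(G)=0$, $\Ch_1(G)$ is a nonzero effective class, and $r_{s,t}(G)=tH\cdot\Ch_1(G)>0$. Next consider the quotient $Q:=\coker\iota$ in $\Coh_{s,t}$, nonzero since $F$ is not a sheaf; its long exact cohomology sequence
$$0\to\ms O_X\to\mc H^{-1}(Q)\to G\to\ms O_T\to\mc H^0(Q)\to 0$$
shows $N:=\mc H^{-1}(Q)$ is torsion-free of rank $1$ (hence $H$-slope stable, its own HN factor) and $\mc H^0(Q)$ is $0$-dimensional, so that $r_{s,t}(N)=-r_{s,t}(Q)$ and $d_{s,t}(N)=d_{s,t}(\mc H^0(Q))-d_{s,t}(Q)$ with $d_{s,t}(\mc H^0(Q))\ge 0$. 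Additivity of the Bridgeland rank and degree together with $\nu_{s,t}(F)=0$ give $d_{s,t}(G)+d_{s,t}(Q)=0$; since $\nu_{s,t}(G)>0$ and $r_{s,t}(G)>0$ we get $d_{s,t}(G)>0$, hence $d_{s,t}(Q)<0$, which forces $r_{s,t}(Q)>0$ (else the nonzero $Q$ would have $d_{s,t}(Q)>0$). Therefore $r_{s,t}(N)<0$, $d_{s,t}(N)>0$, so $\nu_{s,t}(N)<0$, and $r_{s,t}(N)\ne-r_{s,t}(F)$ (equality would force $r_{s,t}(G)=0$).

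Now $Q$ is a destabilizing quotient of $F$ with unique degree $-1$ HN factor $N$ of rank $r=1$, so Proposition \ref{prop:rk ineq} gives $0<A\cdot H\le\tfrac12 H^2$ for $A:=\Ch_1(N)$, while the computation of Proposition \ref{prop:deg ineq} applied to $N$ — now fed the \emph{strict} inequality $d_{s,t}(N)>0$, together with Langer's Bogomolov inequality $2\Ch_2(N)\le A^2+C_X$ — yields the strict bound $A\cdot H< A^2+C_X+2l_T$. Finally, since $\ms O_X\hookrightarrow N$ with torsion cokernel, $A$ is the class of a nonzero effective divisor, so $A\cdot H\ge C_X+2l_T+1$ by hypothesis; combined with the previous bound this forces $A^2>1$. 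The Hodge index theorem on $X$ (Observations \ref{obs:int}(3)) gives $A^2H^2\le(A\cdot H)^2$, hence $A^2\le\tfrac12 A\cdot H<\tfrac12(A^2+C_X+2l_T)$, i.e.\ $A^2<C_X+2l_T$; then
$$H^2\le\frac{(A\cdot H)^2}{A^2}<\frac{(A^2+C_X+2l_T)^2}{A^2}<(C_X+2l_T+1)^2,$$
the last inequality being elementary for $1<A^2<C_X+2l_T$. This contradicts $H^2\ge(C_X+2l_T+1)^2$. The main obstacle is exactly this last point: with only $H^2\ge(C_X+2l_T+1)^2$ the argument of Lemma \ref{lemma:Hodge} breaks down unless one harvests the strictness of the destabilization (propagated through $\nu_{s,t}(N)<0$ into the strict form of Proposition \ref{prop:deg ineq}) and uses the auxiliary hypothesis $H\cdot C\ge C_X+2l_T+1$ on effective curves; the remaining steps are routine bookkeeping with the cohomology sequence and the Bridgeland rank and degree.
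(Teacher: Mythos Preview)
Your argument is correct and follows the same overall structure as the paper's (pass to the quotient $Q$, identify the rank-one torsion-free sheaf $N=\mc H^{-1}(Q)$, apply the Bogomolov and Hodge index inequalities, and contradict the effectivity hypothesis on $H\cdot C$). The genuine difference is in how you handle the weak hypothesis $H^2\ge(C_X+2l_T+1)^2$. The paper perturbs: it replaces $H$ by $H_\epsilon=(1+\epsilon)H$, checks via continuity that the strictly destabilizing triangle persists in $\Coh_{s_\epsilon H_\epsilon}$ for small $\epsilon$, and then applies Lemma~\ref{lemma:Hodge} verbatim at $H_\epsilon$ (where the strict inequality $H_\epsilon^2>(C_X+2l_T+1)^2$ now holds). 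You instead keep $H$ fixed and propagate the strictness of the destabilization through the algebra: from $\nu_{s,t}(G)>0$ you extract $d_{s,t}(N)>0$ strictly, which sharpens the conclusion of Proposition~\ref{prop:deg ineq} to $A\cdot H<A^2+C_X+2l_T$, and then you rerun the Hodge index computation with this strict bound (your final inequality $(A^2+c)^2/A^2<(c+1)^2$ is equivalent to $(A^2-1)(A^2-c^2)<0$, valid for $1<A^2<c$). Your route avoids the continuity argument entirely and is slightly more self-contained; the paper's route is more modular, reusing Lemma~\ref{lemma:Hodge} as a black box rather than reopening the Hodge index step.
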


\begin{proof}
Suppose $K^\bullet \hookrightarrow F$ is a strictly destabilizing injection in $\Coh_{1/2}$, where $K^\bullet = \mc H^0(K^\bullet)=: K$ is torsion. Let $Q^\bullet$ be the quotient object. Then by the long exact sequence of cohomology we conclude that $E := \mc H^{-1}(Q^\bullet)$ is of rank 1, so in particular this is $H$-slope semistable, and since $Q^\bullet$ is a destabilizing quotient, we must have $r_{s,t}(E) \ne 0$ (or else, since $\mc H^0(Q^\bullet)$ is supported in dimension 0, we would conclude that $\nu_{s,t}(Q^\bullet) = \infty > \nu_{s,t}(F)$).

For any $\epsilon > 0$ define $H_\epsilon :=(1+\epsilon)H$, $s_\epsilon := \frac{1}{2}$, and $t_\epsilon := \sqrt{\frac{1}{4} - \frac{2l_T + C_X}{H_\epsilon^2}}$. We note that $K^\bullet \in \Coh_{s_\epsilon H_\epsilon}$ for all $\epsilon$, and $Q^\bullet \in \Coh_{s_\epsilon H_\epsilon}$ for $\epsilon$ sufficiently small (where here we use that $\mu_{tH}(E) < stH^2$ with strict inequality because $r_{s,t}(\mc H^{-1}(Q^\bullet)) \ne 0$ and thus the same holds for $\epsilon$ sufficiently small). Thus the exact triangle $K^\bullet \to F \to Q^\bullet \xrightarrow{+1}$ is contained in $\Coh_{s_\epsilon H_\epsilon}$ and so $F \to Q^\bullet$ continues to be a surjection in $\Coh_{s_\epsilon}$ for $\epsilon$ sufficiently small. Furthermore, since Bridgeland slope is a continuous function of $\epsilon$ (since the denominators $r_{s,t}(F), r_{s,t}(Q^\bullet) \ne 0$), we conclude that $Q^\bullet$ must be a strictly destabilizing quotient of $F$ for $\epsilon$ sufficiently small.

Now, we apply the argument of the proof of Proposition \ref{prop: O}: first, by definition of $F$ being of type $O$ we know that $K$ must be supported in dimension 1 (as this is torsion but not supported in dimension 0), and so in particular has positive Bridgeland rank. Also $\mc H^0(Q^\bullet)$ is supported in dimension 0, so we have
$$0 \le r_{s_\epsilon, t_\epsilon}(Q^\bullet) = -r_{s_\epsilon, t_\epsilon}(E) < r_{s_\epsilon, t_\epsilon}(F)$$
and also
$$\nu_{s_\epsilon, t_\epsilon}(E) \le \nu_{s_\epsilon, t_\epsilon}(Q^\bullet) \le \nu_{s_\epsilon, t_\epsilon}(F) = 0.$$
Thus we can apply Propositions \ref{prop:rk ineq} and \ref{prop:deg ineq}
to end up in the situation of Lemma \ref{lemma:Hodge} with $r = 1$.
Then we conclude $\Ch_1(E) \cdot H < C_X + 2l + 1$, a contradiction since $\Ch_1(E)$ is the effective divisor given by the support of $K$.
\end{proof}


\subsubsection{Objects of the form $L \otimes \mc I_Z$}
Let $L$ be an ample line bundle and $Z \subset X$ be a subscheme of dimension 0. We write $l_Z := \Length(Z)$ and fix $H := \Ch_1(L) \in A_1(X)$. Our goal is to show that $L \otimes \mc I_Z$ is Bridgeland stable for the stability condition $s = \frac{1}{2}, t = \sqrt{\frac{1}{4} - \frac{2l_Z + C_X}{H^2}}$ considered in the last section (here with $l_Z$ replacing $l_T$).

We note first of all that $L \otimes \mc I_Z$ is $H$-slope semistable as this is of rank 1, and so $L \otimes \mc I_Z \in \Coh_{s,t}$ exactly when
$$r_{s,t}(L \otimes \mc I_Z) = r_{s,t}(L) = tH \cdot (H - sH) > 0, $$
i.e., when $s < 1$. So in particular, $L \otimes \mc I_Z \in \Coh_{s,t}$ for our choice of $s = \frac{1}{2}$.

\begin{proposition}\label{prop: rk1}
Let $L$, $Z$, $s$, and $t$ be as above, and suppose $H := \Ch_1(L)$ satisfies $H^2 > 4(2l_Z + C_X)$ and $H \cdot C \ge 2(2l_Z + C_X)$ for any nonzero effective divisor $C$.
Then $L \otimes I_Z \in \Coh_{s,t}$ is not destabilized by a subsheaf $L \otimes I_Y, Z \subsetneq Y \subsetneq X$.
\end{proposition}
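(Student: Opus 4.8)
The plan is a direct comparison of Bridgeland slopes. Recall that the stability condition $s = \tfrac12$, $t = \sqrt{\tfrac14 - \tfrac{2l_Z + C_X}{H^2}}$ is arranged so that, from $\Ch(L \otimes I_Z) = (1,\, H,\, \tfrac12 H^2 - l_Z)$ and the identity $(s^2 - t^2)H^2 = 2l_Z + C_X$, one reads off $r_{s,t}(L \otimes I_Z) = \tfrac12 t H^2 > 0$ and $d_{s,t}(L \otimes I_Z) = 0$, hence $\nu_{s,t}(L \otimes I_Z) = 0$. So it suffices to show that any subsheaf $L \otimes I_Y$ with $Z \subsetneq Y \subsetneq X$ which is an object of $\Coh_{s,t}$ satisfies $\nu_{s,t}(L \otimes I_Y) < 0$.

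First I would pin down $\Ch(L \otimes I_Y)$. Since $Y \subsetneq X$, the reflexive hull is $I_Y^{**} = \O_X(-D)$ for a unique effective Weil divisor $D$, the divisorial part of $Y$; because $X$ is normal, $\O_X(-D)/I_Y$ is supported in dimension $0$, and we set $l' := \Length(\O_X(-D)/I_Y) \ge 0$. Using $\int_X \Ch_2(\O_X(G)) = \tfrac12 G^2$ for a rank-one reflexive sheaf (intersection in the sense of Observations \ref{obs:int}) together with $\Ch(L \otimes E) = \Ch(L)\,\Ch(E)$, I get $\Ch(I_Y) = (1,\, -D,\, \tfrac12 D^2 - l')$, hence $\Ch(L \otimes I_Y) = (1,\, H - D,\, \tfrac12 (H - D)^2 - l')$. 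Feeding this into the expressions for $r_{s,t}$ and $d_{s,t}$ and again using $(s^2 - t^2)H^2 = 2l_Z + C_X$,
$$r_{s,t}(L \otimes I_Y) = t\!\left(\tfrac12 H^2 - H \cdot D\right), \qquad d_{s,t}(L \otimes I_Y) = l_Z - l' - \tfrac12 D \cdot (H - D).$$

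Next I would split on the sign of $r_{s,t}(L \otimes I_Y)$. Being a rank-one torsion-free sheaf, $L \otimes I_Y$ is an object of $\Coh_{s,t}$ precisely when it lies in $\mc T_{s,t}$, i.e. when $r_{s,t}(L \otimes I_Y) > 0$, equivalently $H \cdot D < \tfrac12 H^2$; if $H \cdot D \ge \tfrac12 H^2$ (which forces $D \ne 0$) then $L \otimes I_Y \in \mc F_{s,t}$ is not an object of $\Coh_{s,t}$, hence not a subobject of $L \otimes I_Z$, and there is nothing to prove. In the remaining case $H \cdot D < \tfrac12 H^2$, the natural inclusion $L \otimes I_Y \hookrightarrow L \otimes I_Z$ has cokernel $Q := L \otimes (I_Z/I_Y)$, which is nonzero (as $Z \subsetneq Y$) and torsion (a twist of a subsheaf of the rank-zero sheaf $\O_X/I_Y$), hence lies in $\mc T_{s,t} \subset \Coh_{s,t}$; since all three terms lie in the tilt, $0 \to L \otimes I_Y \to L \otimes I_Z \to Q \to 0$ is short exact in $\Coh_{s,t}$, exhibiting $L \otimes I_Y$ as a proper subobject. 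It then remains to show $d_{s,t}(L \otimes I_Y) < 0$. If $D = 0$, then $l' = \Length(Y)$ and $d_{s,t}(L \otimes I_Y) = l_Z - \Length(Y) < 0$ because $Z \subsetneq Y$. If $D \ne 0$, the Hodge index theorem (Observations \ref{obs:int}(3)) gives $D^2 \le (D \cdot H)^2 / H^2 < \tfrac12 D \cdot H$, where the strict inequality uses $D \cdot H < \tfrac12 H^2$; consequently $D \cdot (H - D) > \tfrac12 D \cdot H \ge 2l_Z + C_X \ge 2l_Z$, the middle step being the hypothesis $H \cdot C \ge 2(2l_Z + C_X)$ applied to the nonzero effective divisor $C = D$. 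Hence $d_{s,t}(L \otimes I_Y) = l_Z - l' - \tfrac12 D \cdot (H - D) < -l' \le 0$. In both cases $\nu_{s,t}(L \otimes I_Y) < 0 = \nu_{s,t}(L \otimes I_Z)$, so $L \otimes I_Y$ does not destabilize $L \otimes I_Z$.

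The step I expect to take the most care is the Chern-character bookkeeping on a possibly singular $X$: one must check that $I_Y^{**} = \O_X(-D)$ with $D$ effective, that $\int_X \Ch_2$ of a rank-one reflexive sheaf is $\tfrac12 c_1^2$ in Langer's normalization, and that tensoring with the line bundle $L$ multiplies Chern characters — all of which follow from Langer's construction and Observations \ref{obs:int}. Everything after that is the short slope comparison above, and the hypothesis $H \cdot C \ge 2(2l_Z + C_X)$ is invoked only in the Hodge-index estimate that handles a possible divisorial component of $Y$; when $Y$ is zero-dimensional, the conclusion needs nothing beyond $Z \subsetneq Y$ and $t$ being real.
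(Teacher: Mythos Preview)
Your argument follows the same path as the paper's proof: compute $d_{s,t}(L\otimes I_Y)$, handle the zero-dimensional case by a length count, and in the divisorial case combine the Hodge index theorem with the hypothesis $H\cdot C\ge 2(2l_Z+C_X)$. The structure is correct, and your check that $L\otimes I_Y$ really is a subobject in $\Coh_{s,t}$ (the cokernel being torsion hence in $\mc T_{s,t}$) is more explicit than the paper's.

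The one genuine gap is the formula $\int_X\Ch_2(\O_X(-D))=\tfrac12 D^2$ for a Weil divisor $D$. On a singular normal surface this fails in Langer's normalization: his $\Ch_2$ is fixed by a Riemann--Roch identity, and for a rank-one reflexive sheaf the discriminant $\Delta(\O_X(-D))=D^2-2\Ch_2(\O_X(-D))$ picks up local contributions from the singularities (this is visible in Langer's formula comparing $\int_{\tilde X}\Delta(F)$ and $\int_X\Delta(E)$ quoted in \S\ref{sec: cx}). So your exact expression $d_{s,t}(L\otimes I_Y)=l_Z-l'-\tfrac12 D\cdot(H-D)$ is not valid in general.

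The fix is exactly what the paper does: instead of an equality for $\Ch_2$, apply Langer's Bogomolov inequality to the rank-one torsion-free sheaf $L\otimes I_Y$ to get
\[
2\Ch_2(L\otimes I_Y)\le (H-D)^2+C_X,
\]
which yields the upper bound $d_{s,t}(L\otimes I_Y)\le l_Z+\tfrac12 C_X-\tfrac12 D\cdot(H-D)$. Your Hodge-index estimate already gives $D\cdot(H-D)>\tfrac12 D\cdot H\ge 2l_Z+C_X$, which absorbs precisely this extra $\tfrac12 C_X$; so once you replace the false equality by the Bogomolov bound, your argument goes through verbatim and coincides with the paper's. (Incidentally, the multiplicativity $\Ch(L\otimes E)=\Ch(L)\cdot\Ch(E)$ for $L$ a line bundle does hold in Langer's setup, so that part of your bookkeeping is fine.)
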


\begin{proof}
Let $Y_1 \in A_1(X)$ be dimension 1 component of $[Y] \in A_*(X)$. We note immediately that if $Y_1 = 0$, then the sheaf $L \otimes I_Y$ is not destabilizing, as $r_{s,t}(L \otimes I_Y) = r_{s,t}(L \otimes I_Z) \ne 0$ and $d_{s,t}(L \otimes I_Y) = d_{s,t}(L \otimes I_Z) - \Length(I_Z/I_Y)$.

Then since $L \otimes I_Y$ is of rank 1 and thus $H$-slope semistable, we see that $L \otimes I_Y \in \Coh_{s,t}$ if and only if
$$r_{s,t}(L \otimes I_Y) > 0 \iff H^2 > 2H \cdot Y_1 > 0$$
where the second inequality follows from the fact that $Y_1 \ne 0$ is effective.

Now, for $L \otimes I_Y \in \Coh_{s,t}$ to be a destabilizing subsheaf we must have
$$\nu_{s,t}(L \otimes I_Y) \ge \nu_{s,t}(L \otimes I_Z) = 0,$$
and since $r_{s,t}(L \otimes I_Y) > 0$ this means
$$d_{s,t}(L \otimes I_Y) = \Ch_2(L \otimes I_Y) - \frac{1}{2} (H-Y_1) \cdot H + l_Z \ge 0.$$
By Langer's Bogomolov inequality \cite[Theorem 0.1]{langer2024bridgeland} we have
$$2\Ch_2(L \otimes I_Y) \le (H - Y_1)^2 + C_X$$
and so this would imply
$$(H - Y_1)^2 - (H - Y_1) \cdot H + C_X + 2l_Z \ge 0.$$
In other words, to prove our result, it suffices to show
$$ C_X + 2l_Z+ Y_1^2 < Y_1 \cdot H.$$
It follows from the Hodge index theorem (see \ref{obs:int}(3)) and the condition that $L \otimes I_Y \in \Coh_{s,t}$ above that
$$Y_1^2 \le \frac{(Y_1 \cdot H)^2}{H^2} < \frac{Y_1 \cdot H}{2}.$$
Now since $Y_1$ is effective, by assumption we have
$$C_X+2l_Z\leq \frac{Y_1 \cdot H}{2}.$$
Putting the two together $$ C_X + 2l_Z+ Y_1^2 < Y_1 \cdot H,$$ completing the proof.
\end{proof}

\begin{proposition}\label{prop:liz}
Let $L$, $Z$, $s$, $t$ be as above, and suppose $H := \Ch_1(L)$ satisfies
$H^2 >( C_X + 2l_Z + 1)^2$
and $H \cdot A_1(X) \subset M \ZZ$ for some $M \ge \max(2(C_X + 2l_Z), C_X + 1)$.
Then $L \otimes I_Z \in \Coh_{s,t}$ is stable with respect to the Bridgeland stability condition $(Z_{s,t}, \Coh_{s,t})$.
\end{proposition}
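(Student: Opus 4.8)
The plan is to combine Proposition~\ref{prop: rk1} with the argument behind Proposition~\ref{prop: O}. I would first recall that the numerology $s=\tfrac12$, $t=\sqrt{\tfrac14-\tfrac{2l_Z+C_X}{H^2}}$ is chosen precisely so that $\nu_{s,t}(L\otimes I_Z)=0$, which one checks by computing $\Ch(L\otimes I_Z)=\bigl(1,\,H,\,\tfrac{H^2}{2}-l_Z\bigr)$ and substituting into the displayed formulas for $r_{s,t}$ and $d_{s,t}$. Assume $L\otimes I_Z$ is not $\nu_{s,t}$-stable; then it has a proper nonzero quotient $Q^\bullet\in\Coh_{s,t}$ which is $\nu_{s,t}$-semistable with $\nu_{s,t}(Q^\bullet)\le 0$, fitting into $0\to K\to L\otimes I_Z\to Q^\bullet\to 0$ in $\Coh_{s,t}$. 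Since $\mc H^{-1}(L\otimes I_Z)=0$, the object $K$ is a sheaf, and the long exact cohomology sequence gives $\mc H^{-1}(Q^\bullet)=\ker(K\to L\otimes I_Z)$ and $\mc H^0(Q^\bullet)=\coker(K\to L\otimes I_Z)$. As a nonzero sheaf in $\Coh_{s,t}$, $K$ has $r_{s,t}(K)>0$ unless it is supported in dimension $0$; but a dimension-$0$ sheaf admits no nonzero morphism to the torsion-free sheaf $L\otimes I_Z$ and so cannot be a subobject. Hence $r_{s,t}(K)>0$ and $r_{s,t}(Q^\bullet)<r_{s,t}(L\otimes I_Z)$.

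I would then split into two cases. If $\mc H^{-1}(Q^\bullet)=0$, then $K\hookrightarrow L\otimes I_Z$ as sheaves, so $K$ is torsion-free of rank $0$ or $1$; rank $0$ is impossible, and in the rank-$1$ case $K\cong L\otimes I_Y$ with $Z\subseteq Y$, with properness forcing $Z\subsetneq Y\subsetneq X$. The standing hypotheses imply those of Proposition~\ref{prop: rk1}: indeed $(C_X+2l_Z+1)^2\ge 4(2l_Z+C_X)$ because $(C_X+2l_Z-1)^2\ge 0$, and for every effective curve $C$ the number $H\cdot C$ is a positive element of $M\ZZ$ with $M\ge 2(2l_Z+C_X)$. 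So $K=L\otimes I_Y$ cannot destabilize $L\otimes I_Z$, a contradiction.

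If $\mc H^{-1}(Q^\bullet)\ne 0$, I would imitate the proof of Proposition~\ref{prop: O}. Since $\mc H^{-1}(Q^\bullet)$ is torsion-free and lies in $\mc F_{s,t}$, each of its $H$-slope HN factors $E_i$ has $r_{s,t}(E_i)\le 0$, and $\sum_i\bigl(-r_{s,t}(E_i)\bigr)=-r_{s,t}\bigl(\mc H^{-1}(Q^\bullet)\bigr)\le r_{s,t}(Q^\bullet)<r_{s,t}(L\otimes I_Z)$, so $r_{s,t}(E_i)\ne-r_{s,t}(L\otimes I_Z)$ for all $i$. As $Q^\bullet$ is $\nu_{s,t}$-semistable, the subobject $\mc H^{-1}(Q^\bullet)[1]$ has Bridgeland slope $\le\nu_{s,t}(Q^\bullet)\le 0$; writing this as the weighted average of the $\nu_{s,t}(E_i[1])$ produces a factor $E$ with $\nu_{s,t}(E)\le 0$ and $r_{s,t}(E)\le 0$. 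Now Propositions~\ref{prop:rk ineq} and~\ref{prop:deg ineq}, applicable with $l=l_Z$ because $H^2>(C_X+2l_Z+1)^2\ge 4(2l_Z+C_X)$, place $r:=\Ch_0(E)$ and $\Ch_1(E)$ into the setting of Lemma~\ref{lemma:Hodge}, whence either $r=1$ and $0<\Ch_1(E)\cdot H<C_X+2l_Z+1$, or $r\ge 3$ and $H^2<3(C_X+2l_Z+1)$.

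The first alternative is absurd since $\Ch_1(E)\cdot H$ would then be a nonzero element of $M\ZZ$ of size $<C_X+2l_Z+1\le M$. The second alternative---which by Lemma~\ref{lemma:Hodge} forces $l_Z=0$ and $C_X<2$---is the part I expect to be the main obstacle: the bound $H^2>(C_X+1)^2$ by itself does not preclude $H^2<3(C_X+1)$, so one must exploit the divisibility hypothesis $H\cdot A_1(X)\subset M\ZZ$ with $M\ge\max(2C_X,\,C_X+1)$ to show the narrow interval $\bigl((C_X+1)^2,\,3(C_X+1)\bigr)$ containing $H^2$ holds no multiple of $M$, contradicting $H^2\in M\ZZ$. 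This is exactly the step where Arcara--Bertram's argument for $\ms O_X[1]$ degenerates, and it is the reason the optimal bound must be raised from $2$ to $3$ when $C_X=1$ and $l_Z=0$ in Definition~\ref{def: m'}.
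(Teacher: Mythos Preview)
Your argument is essentially the paper's. The only structural difference is the case split: the paper divides according to whether $\nu_{s,t}(\mc H^0(Q^\bullet))\le 0$ (and then passes to the quotient $L\otimes I_Z\twoheadrightarrow \mc H^0(Q^\bullet)$), whereas you divide according to whether $\mc H^{-1}(Q^\bullet)=0$. Your version requires taking $Q^\bullet$ to be $\nu_{s,t}$-semistable so that the subobject $\mc H^{-1}(Q^\bullet)[1]\hookrightarrow Q^\bullet$ inherits slope $\le 0$; this is harmless (pass to the minimal HN quotient), and the paper's version avoids it by using the weighted-average decomposition $\nu_{s,t}(Q^\bullet)$ in terms of $\nu_{s,t}(\mc H^{-1}(Q^\bullet)[1])$ and $\nu_{s,t}(\mc H^0(Q^\bullet))$. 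Either way one lands in the situation of Lemma~\ref{lemma:Hodge}.

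On the $r\ge 3$ branch: you are right to flag it, and in fact the paper's proof simply invokes Lemma~\ref{lemma:Hodge} and records only the $r=1$ conclusion $0<A\cdot H<C_X+2l_Z+1$, without separately disposing of the alternative $r\ge 3$, $l_Z=0$, $C_X<2$, $H^2<3(C_X+1)$. Your proposed remedy, however, is aimed at the wrong quantity: arguing that $((C_X+1)^2,\,3(C_X+1))$ contains no multiple of $M$ fails already for $C_X=1$, $M=5$, $H^2=5$. The divisibility hypothesis is more profitably applied to $A\cdot H$: for $r=3$ one has $A\cdot H>H^2$ with both in $M\ZZ$, hence $A\cdot H\ge H^2+M$ and $D\cdot H=\tfrac{A\cdot H}{3}\ge\tfrac{H^2+M}{3}\ge M\ge C_X+1$, forcing $D^2\ge 1$ and feeding back into the Hodge-index step of Lemma~\ref{lemma:Hodge}. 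A similar refinement handles larger $r$. So the gap you identified is real (and shared with the paper), but your sketch of how to close it needs this adjustment. Note also that this proposition is not used in the paper's main theorems, which rely instead on Propositions~\ref{prop: rk1} and~\ref{prop: Ork1}.
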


\begin{proof}
Suppose that $L \otimes I_Z$ is destabilized by some quotient $Q^\bullet$, i.e., that $0 = \nu_{s,t}(L \otimes I_Z) \ge \nu_{s,t}(Q^\bullet)$. Using the short exact sequence in $\Coh_{s,t}$ given by the exact triangle
$$\mc H^{-1}(Q^\bullet)[1] \rightarrow Q^\bullet \rightarrow \mc H^0(Q^\bullet) \xrightarrow{+1}$$
we conclude that at least one of $\nu_{s,t}(\mc H^{-1}(Q^\bullet)[1]) = \nu_{s,t}(\mc H^{-1}(Q^\bullet))$ and $\nu_{s,t}(\mc H^0(Q^\bullet))$ satisfies this inequality as well.

If $\nu_{s,t}(\mc H^0(Q^\bullet)) \le 0$, then $L \otimes I_Z \twoheadrightarrow Q^\bullet \twoheadrightarrow \mc H^0(Q^\bullet)$ is a destabilizing quotient whose kernel is a subsheaf of $L \otimes I_Z$, but this is impossible by Proposition \ref{prop: rk1}.

On the other hand, if this is not the case, then $\nu_{s,t}(\mc H^{-1}(Q^\bullet)) \le 0$ and $\mc H^{-1}(Q^\bullet) \ne 0$, and we proceed as in the proof of Proposition \ref{prop: O}: we may choose an $H$-slope stable factor $E$ of $\mc H^{-1}(Q^\bullet)$ with $\nu_{s,t}(E) \le \nu_{s,t}(\mc H^{-1}(Q^\bullet)) \le 0$ and $r_{s,t}(E) \le 0$, so that Proposition \ref{prop:deg ineq} applies. Furthermore, since $\mc H^{-1}(Q^\bullet) \ne 0$ and thus $\Ch_0(\mc H^{-1}(Q^\bullet)) > 0$, the same is true of $\mc H^0(K^\bullet)$, letting $K^\bullet = \mc H^0(K^\bullet)$ be the kernel object. Then we have $r_{s,t}(K^\bullet) > 0$ by definition of $\mc T_{s,t}$, and thus the inequality $r_{s,t}(Q^\bullet) < r_{s,t}(L \otimes I_Z)$ is strict. In other words, we get
$$0 \le r_{s,t}(E[1]) = -r_{s,t}(E) < r_{s,t}(L \otimes I_Z) \implies \frac{r-1}{2} H^2 < A \cdot H \le \frac{r}{2} H^2,$$
writing $r := \Ch_0(E)$ and $A := \Ch_1(E)$ as before. Then by Lemma \ref{lemma:Hodge} we conclude that $0 < A \cdot H < C_X + 2l_Z+1$, but by our assumption on $H \cdot A_1(X)$ this is impossible. (We have $C_X + 2l_Z + 1 \le 2(C_X + 2l_Z)$ unless $l_Z = 0, C_X < 1$, in which case the correct bound is $C_X + 1$.)

\end{proof}

\subsection{Reduction to the torsion sheaf case}
As usual, we fix a projective normal surface $X$, an ample line bundle $L$ on $X$, a finite length subscheme $Z$ of length $l_Z$, and an object $F$ of type $O$ with $\Length(\mc H^0(F)) = l_T$. Setting $H := c_1(L)$, recall that $L \otimes I_Z, F$ belong to the heart $\Coh_{1/2}$ defined in Section \ref{sec:preliminaries}. In particular, as this is a full abelian subcategory of $D^b(X)$, it makes sense to talk about the image of a homomorphism $f \in \Hom(L \otimes I_Z, F)$.
\begin{proposition}\label{prop:r=1}
Let $X,L,Z,F, l_Z, l_T$ be as above, and suppose $H^2 > 4(l_Z + l_T + C_X)$.
Then given $f \ne 0 \in \Hom(L \otimes I_Z, F)$ we have that $\im f = \mc H^0(\im f)$ is a torsion sheaf.
\end{proposition}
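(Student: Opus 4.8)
The plan is to analyze the morphism $f \colon L \otimes I_Z \to F$ inside the heart $\Coh_{1/2}$ by looking at the cohomology sheaves of its image. Write $G := \im f \in \Coh_{1/2}$. Since $G$ is a subobject of $F$ in $\Coh_{1/2}$, the long exact cohomology sequence applied to $0 \to G \to F \to F/G \to 0$ shows that $\mc H^{-1}(G)$ is a subsheaf of $\mc H^{-1}(F) = \ms O_X$, hence is either $0$ or a rank-$1$ torsion-free sheaf (an ideal sheaf twist). Simultaneously, since $G$ is a quotient of $L \otimes I_Z$ in $\Coh_{1/2}$, and $L \otimes I_Z$ is a sheaf sitting in cohomological degree $0$, the cohomology sequence for $0 \to K \to L \otimes I_Z \to G \to 0$ forces $\mc H^{-1}(G) = 0$ unless $\mc H^0(K)$ is nonzero in a way that... actually the cleanest route is: the kernel object $K^\bullet$ of $f$ satisfies $\mc H^{-1}(K^\bullet) = 0$ and $\mc H^{-1}(G) \hookleftarrow 0$ maps into $\mc H^0(K^\bullet) \hookrightarrow L \otimes I_Z$, so $\mc H^{-1}(G)$ injects into $L \otimes I_Z$ while also injecting into $\ms O_X$. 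The point is then to rule out the case $\mc H^{-1}(G) \neq 0$.

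So suppose for contradiction $\mc H^{-1}(G) \neq 0$; then it is a rank-$1$ torsion-free subsheaf of $\ms O_X$, hence of the form $I_W(-D)$ for some effective divisor $D$ and finite-length $W$ (or just note $\Ch_0(\mc H^{-1}(G)) = 1$, $\Ch_1(\mc H^{-1}(G)) = -D$ with $D$ effective, possibly zero). Since $G \hookrightarrow F$ with $\nu_{1/2,t}(F) = 0$ (for the stability condition $s = \tfrac12$, $t = \sqrt{\tfrac14 - \tfrac{2(l_Z + l_T) + C_X}{H^2}}$, which is well-defined because $H^2 > 4(l_Z + l_T + C_X)$ — here I take $l = l_Z + l_T$ throughout), and $G$ is a quotient of $L \otimes I_Z$... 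I would instead combine the two stability inputs. The key computation: $\Ch(G) = \Ch(L \otimes I_Z) - \Ch(K^\bullet)$ and $\Ch(G) = \Ch(F) + \Ch(F/G)$. From the first, $\Ch_0(G) = 1$; from the second, $\Ch_0(G) = \Ch_0(F) + \Ch_0(F/G) = -1 + \Ch_0(F/G)$, so $\Ch_0(F/G) = 2$, meaning $\mc H^{-1}(F/G)$ has rank $2$. The real contradiction will come from comparing Bridgeland slopes: because both $L \otimes I_Z$ and $F$ are actually Bridgeland stable at this stability condition (Propositions \ref{prop:liz} and \ref{prop: O} — or more precisely, since we only need the destabilizing-object analysis, I would rerun the rank-counting argument of Proposition \ref{prop:rk ineq} and Proposition \ref{prop:deg ineq} directly on the cohomology sheaves of $\im f$), an $H$-slope semistable factor $E$ of $\mc H^{-1}(G)$ would have to satisfy the inequalities $\tfrac{r-1}{2}H^2 < A \cdot H \le \tfrac{r}{2}H^2$ and $A \cdot H \le \tfrac{A^2}{r} + r(C_X + 2(l_Z + l_T))$ with $r = 1$, forcing $A \cdot H < C_X + 2(l_Z + l_T) + 1$; but then one plays this against the fact that $G$ being a quotient of $L \otimes I_Z$ of rank $1$ with nontrivial $\mc H^{-1}$ makes the Bridgeland rank of $G$ too small, contradicting $\nu_{1/2,t}(G) \ge \nu_{1/2,t}(F) = 0$ together with $\nu_{1/2,t}(L\otimes I_Z) = 0$.

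Let me restate the cleanest version. With $\mc H^{-1}(G) \neq 0$ of rank $1$, we have $\Ch_0(G) = 1 - 1 = 0$ if we also had $\mc H^0(G)$ contributing... no: $\Ch_0(G) = \Ch_0(\mc H^0(G)) - \Ch_0(\mc H^{-1}(G))$. Since $G$ is a quotient of the rank-$1$ sheaf $L \otimes I_Z$, we have $\mc H^0(G)$ is a quotient sheaf and $\mc H^{-1}(G)$ torsion-free; the only way $\Ch_0(G)$ can be nonnegative (required since $G \in \Coh_{1/2}$... actually $r_{1/2,t}(G) \ge 0$, not $\Ch_0$) — the honest approach is: $G$ being a quotient of $L \otimes I_Z$ in $\Coh_{1/2}$ means the kernel $K^\bullet = \mc H^0(K^\bullet)$ is a torsion-free sheaf with $r_{1/2,t}(K^\bullet) > 0$ if nonzero, so $r_{1/2,t}(G) \le r_{1/2,t}(L \otimes I_Z)$; and $G$ being a subobject of $F$ means $\nu_{1/2,t}(G) > \nu_{1/2,t}(F) = 0$ if $G \neq F$ and $F$ is stable — but I do not want to assume stability of $F$. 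So instead: apply the inequalities from Propositions \ref{prop:rk ineq} and \ref{prop:deg ineq} to a semistable factor $E$ of $\mc H^{-1}(G)$ (the existence of which, with $\nu_{1/2,t}(E) \le 0$ and $r_{1/2,t}(E) \le 0$, follows exactly as in the proof of Proposition \ref{prop: O} using that $\mc H^0(G)$ is supported in dimension $\le 1$... wait, $\mc H^0(G)$ need not be torsion).

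I will organize it as follows, which I believe is the intended argument. Since $G = \im f$ is a subobject of $F$ in $\Coh_{1/2}$, we get a short exact sequence $0 \to G \to F \to C^\bullet \to 0$, and $\mc H^{-1}(G) \subseteq \mc H^{-1}(F) = \ms O_X$; so $\mc H^{-1}(G)$ is $0$ or rank $1$. Since $G$ is also a quotient of $L \otimes I_Z$, we get $0 \to K \to L \otimes I_Z \to G \to 0$ with $K = \mc H^0(K)$ torsion-free (a twisted ideal sheaf), and the cohomology sequence gives $\mc H^{-1}(G) \cong K / (\text{image of } 0) = 0$ **if** the connecting map is injective — precisely, $0 \to K \to L \otimes I_Z \to \mc H^0(G) \to 0$ is exact and $\mc H^{-1}(G) = 0$ automatically because $L \otimes I_Z$ is a sheaf! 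Indeed the image of a sheaf map between sheaves, computed in $\Coh_{1/2}$, still has $\mc H^{-1} = 0$ when the source is a sheaf in degree $0$: the kernel object $K^\bullet$ has $\mc H^{-1}(K^\bullet) = 0$ and $\mc H^0(K^\bullet) \hookrightarrow L \otimes I_Z$, so $\mc H^{-1}(G) = \mc H^{-1}(\Cone(K^\bullet \to L\otimes I_Z)) = 0$. Hence $\im f = \mc H^0(\im f)$ is a sheaf. It remains to see it is torsion, equivalently that $\mc H^0(\im f)$ is not of rank $1$ (it has rank $\le 1$ as a quotient of $L \otimes I_Z$): if it had rank $1$, then $\Ch_0(\im f) = 1$, but from $0 \to \im f \to F \to C^\bullet \to 0$ with $\Ch_0(F) = -1$ we would need $\Ch_0(C^\bullet) = 2$, i.e. $\mc H^{-1}(C^\bullet)$ has rank $2$; then the rank-$2$ analysis — apply Propositions \ref{prop:rk ineq}, \ref{prop:deg ineq}, \ref{lemma:Hodge} (with $l = l_Z + l_T$, using $H^2 > 4(l_Z+l_T+C_X)$ to define the stability condition, but here we only need the weaker hypothesis since Lemma \ref{lemma:Hodge} with $r \ge 2$ needs more) — shows a rank-$\ge 2$ semistable factor cannot exist unless the very restrictive numerical conditions of Lemma \ref{lemma:Hodge} hold, which is exactly what the hypothesis $H^2 > 4(l_Z + l_T + C_X)$ rules out.

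\textbf{Main obstacle.} The delicate point is handling the case where $\mc H^0(\im f)$ has rank $1$: one must extract from $C^\bullet = F/\im f$ a torsion-free $H$-semistable sheaf $E$ of rank $\ge 1$ sitting in $\mc H^{-1}(C^\bullet)$ (rank $2$) with $\nu_{1/2,t}(E) \le 0$ and $r_{1/2,t}(E) \neq -r_{1/2,t}(F)$, so that Propositions \ref{prop:rk ineq} and \ref{prop:deg ineq} apply and Lemma \ref{lemma:Hodge} forces either $r = 1$ (contradicting that we isolated the rank-$2$ contribution) or the numerically impossible regime $H^2 < 3(C_X + 2(l_Z+l_T)+1)$, excluded by $H^2 > 4(l_Z+l_T+C_X)$ when combined with $l_Z + l_T \ge 0$. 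Making the slope bookkeeping with the mixed-rank object $C^\bullet$ precise — in particular ensuring $r_{1/2,t}(E) \neq -r_{1/2,t}(F)$ so that Proposition \ref{prop:rk ineq} is applicable, and that the degree inequality survives — is the only real work; everything else is the cohomology-sheaf formalism already developed above.
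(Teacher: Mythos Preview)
There are two genuine gaps.

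\textbf{Gap 1: the argument for $\mc H^{-1}(\im f)=0$ is circular.} You assert that ``$\mc H^0(K^\bullet)\hookrightarrow L\otimes I_Z$'' as sheaves, and deduce $\mc H^{-1}(G)=0$. But the long exact sequence for the triangle $K^\bullet\to L\otimes I_Z\to G$ gives $\mc H^{-1}(G)=\ker_{\Coh(X)}\big(\mc H^0(K^\bullet)\to L\otimes I_Z\big)$, so your assertion is \emph{equivalent} to the conclusion. A monomorphism $K^\bullet\hookrightarrow L\otimes I_Z$ in the tilted heart $\Coh_{1/2}$ between two objects of $\mc T_{1/2}$ does \emph{not} force a sheaf injection: for instance $\ms O(-n)\subset\ms O(1)$ on $\PP^2$ has $\ms O(-n)\in\mc F$, $\ms O(1)\in\mc T$, yet the inclusion is not zero. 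The paper gets $\mc H^{-1}(G)\in\{0,\ms O_X\}$ from the \emph{other} exact sequence ($G\hookrightarrow F$), using that $\ms O_X/\mc H^{-1}(G)$ injects into the torsion-free sheaf $\mc H^{-1}(\coker f)$.

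\textbf{Gap 2: the rank-one case cannot be excluded via Lemma~\ref{lemma:Hodge} under the stated hypothesis.} Your plan for the case $\Ch_0(\mc H^0(G))=1$ is to feed an HN factor $E$ of $\mc H^{-1}(F/G)$ into Propositions~\ref{prop:rk ineq}, \ref{prop:deg ineq} and Lemma~\ref{lemma:Hodge}. Three things fail. First, the stability condition you write, $t=\sqrt{\tfrac14-\tfrac{2(l_Z+l_T)+C_X}{H^2}}$, needs $H^2>8(l_Z+l_T)+4C_X$, not the assumed $H^2>4(l_Z+l_T+C_X)$. Second, you never establish that $F/G$ is a destabilizing quotient, so there is no reason any HN factor $E$ satisfies $\nu_{1/2,t}(E)\le 0$ as Proposition~\ref{prop:deg ineq} requires. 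Third, Lemma~\ref{lemma:Hodge} itself presupposes $H^2>(C_X+2l+1)^2$, which is in general much stronger than the hypothesis of the proposition.

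The paper's argument for this case is far more direct and is in fact the \emph{only} place the hypothesis is used: if $\Ch_0(\mc H^0(G))=1$ one checks $G=L\otimes I_Z$, so $\coker f\ne 0$ has $r_{1/2,t}(\coker f)=r_{1/2,t}(F)-r_{1/2,t}(L\otimes I_Z)=0$ for every $t$, forcing $d_{1/2,t}(\coker f)>0$ for every $t>0$. But one computes
\[
d_{1/2,t}(\coker f)=l_Z+l_T+C_X-\Big(\tfrac14-t^2\Big)H^2,
\]
which is $\le 0$ for $t$ small precisely when $H^2>4(l_Z+l_T+C_X)$. No Bogomolov or Hodge-index input is needed here.
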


\begin{proof}
Let $G := \im f$. We use the fact that $L \otimes I_Z \twoheadrightarrow G \hookrightarrow F$, combined with the definition of $\Coh_{1/2}$.

First, since $L \otimes I_Z \twoheadrightarrow G$, the corresponding short exact sequence in $\Coh_{1/2}$ gives a long exact sequence of cohomology
$$0 \to \mc H^{-1}(G) \to \mc H^0(\ker f) \to L \otimes I_Z \to \mc H^0(G) \to 0$$
which implies that $c_0(\mc H^0(G)) \le 1$ with equality if and only if $\mc H^0(G) = L \otimes I_Z$ (as $L \otimes I_Z$ is torsion-free of rank 1). Furthermore, in this case $\mc H^{-1}(G) = \mc H^0(\ker f) = 0$ by definition of $\Coh_{1/2}$, so we conclude that either $c_0(\mc H^0(G))= 0$ or $G = L \otimes I_Z$. We claim the latter is impossible: indeed, since
$$r_{1/2,t}(L \otimes I_Z) = r_{1/2,t}(F) = \frac{t}{2} H^2$$
we conclude that the quotient $\coker f \ne 0$ must satisfy
$$d_{1/2,t}(\coker f) = d_{1/2,t}(F) - d_{1/2,t}(L \otimes I_Z) > 0$$
for all $t>0$. However, our assumption on $H^2$ implies that this is false for $t$ sufficiently small.
Thus $c_0(\mc H^0(G)) = 0$.

Second, since $G \hookrightarrow F$, we have the long exact sequence
$$0 \to \mc H^{-1}(G) \to \ms O_X \to \mc H^{-1}(\coker f) \to \mc H^0(G) \to \mc H^0(F) \to \mc H^0(\coker f) \to 0.$$
Since $\ms O_X$ and $\ms O_X/\mc H^{-1}(G) \hookrightarrow \mc H^{-1}(\coker f)$ are torsion-free, we conclude that either $\mc H^{-1}(G) = 0$ or $\mc H^{-1}(G) = \ms O_X$. In the latter case, the fact that $\mc H^{-1}(\coker f)$ is torsion-free and injects into $\mc H^0(G)$, which is of rank 0 by the above, implies that $\mc H^{-1}(\coker f) = 0$, so in particular $\mc H^0(G)$ is a subsheaf of $\mc H^0(F)$, therefore of dimension 0. But then we see that
$$\mu_{tH}(\mc H^0(\ker f)) = tH \cdot \frac{\Ch_1(\ms O_X) + \Ch_1(L \otimes I_Z)}{2} = \frac{tH^2}{2} = \frac{H}{2} \cdot tH$$
which is impossible by definition of $\Coh_{1/2}$. So in fact $\mc H^{-1}(G) = 0$.
\end{proof}


\section{Proof of the main results}\label{sec: theorem}

We start by translating our separation of jets statement into a form that can be more effectively approached using Bridgeland stability.

\begin{proposition}\label{prop:restatement}
Let $X$ be a projective normal surface, and $G \in D^b(X)$. We have
$$H^1(X, \omega_X^{DB} \dt G) = \Hom(G, \O0[1])^*.$$
\end{proposition}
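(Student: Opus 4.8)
The plan is to deduce the identity from Grothendieck--Serre duality on the proper Cohen--Macaulay surface $X$, combined with the derived tensor--$\RHom$ adjunction and biduality. Since $X$ is projective and $\omega_X^{DB} \dt G$ lies in $D^b(X)$ with coherent cohomology, every hypercohomology group and every $\Hom$-space appearing below is a finite-dimensional vector space over the base field, so it is equivalent to prove the dual statement
$$H^1(X, \omega_X^{DB} \dt G)^* \;\cong\; \Hom_{D^b(X)}(G, \O0[1]).$$

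To establish this, I would first recall that a normal surface is Cohen--Macaulay, so its dualizing complex is $\omega_X[2]$; Grothendieck--Serre duality then gives, for any $E \in D^b(X)$ with coherent cohomology,
$$H^i(X, E)^* \;\cong\; \Hom_{D^b(X)}(E, \omega_X[2-i]).$$
Applying this with $E = \omega_X^{DB} \dt G$ and $i = 1$ yields $H^1(X, \omega_X^{DB} \dt G)^* \cong \Hom_{D^b(X)}(\omega_X^{DB} \dt G, \omega_X[1])$. Next, the derived tensor--Hom adjunction (with $\RHom = \mathbf{R}\mathcal{H}om$ the sheaf-level derived Hom, as in the conventions of Section \ref{sec:DB}) identifies this with
$$\Hom_{D^b(X)}\bigl(G, \RHom(\omega_X^{DB}, \omega_X)[1]\bigr).$$
Finally, since $\omega_X^{DB} = \DD(\O0)$ by definition, one has $\RHom(\omega_X^{DB}, \omega_X) = \DD(\DD(\O0))$, and Grothendieck biduality --- valid because $X$ carries a dualizing complex and $\O0 \in D^b(X)$ has coherent cohomology --- gives a canonical isomorphism $\DD(\DD(\O0)) \cong \O0$. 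Composing the three identifications and dualizing back completes the proof.

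The steps requiring care are essentially bookkeeping rather than genuine obstacles: one must check that $\O0$ really is an object of the bounded derived category of coherent sheaves (it is, being concentrated in cohomological degrees $0$ and $1$ with coherent cohomology, as recalled in Section \ref{sec:DB}) so that biduality applies; that $\omega_X^{DB}$ is coherent (established in Section \ref{sec:DB}) so that $\omega_X^{DB} \dt G$ has coherent cohomology and the relevant finiteness and duality hold; and that the shift in Serre duality is precisely the one dictated by $\dim X = 2$ together with the Cohen--Macaulay property. I expect no serious difficulty here, since each ingredient is standard duality theory for proper schemes over a field.
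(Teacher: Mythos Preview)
Your proposal is correct and follows essentially the same approach as the paper: both arguments combine Serre duality on the Cohen--Macaulay surface $X$, the derived tensor--$\RHom$ adjunction, and Grothendieck biduality $\DD\DD \cong \mathrm{id}$. The only difference is the order of operations---you apply Serre duality first and then unwind via adjunction and biduality, whereas the paper first rewrites $\omega_X^{DB} \dt G$ as $\DD(\RHom(G,\O0))$ and applies Serre duality afterward---but the content is the same.
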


\begin{proof}
We have
$$H^1(X, \omega_X^{DB} \dt G) = H^1(X, \RHom(\O0, \omega_X) \dt G) = H^1(X, \RHom(\RHom(G, \O0), \omega_X))$$
by \cite[\href{https://stacks.math.columbia.edu/tag/0G4I}{Tag 0G4I}]{stacks-project}, and since $\DD := \RHom(-,\omega_X)$ gives an involution of $D^b(X)$, we get
$$= \Hom(\ms O_X, \RHom(\RHom(G, \O0), \omega_X)[1]) = \Hom(\RHom(G, \O0), \omega_X[1]) $$
$$= \Hom(\ms O_X, \RHom(G, \O0)[1])^* = \Hom(G, \O0[1])^*.$$

\end{proof}

Recall we are interested in a bound for  $a$ such that $H^1(X, \omega_X^{DB} \dt L^{\otimes a} \otimes I_Z) = 0.$ In light of Proposition \ref{prop:restatement}, our strategy will be to use Bridgeland stability conditions in order to show $\Hom(L^{\otimes a} \otimes I_Z, \O0[1]) = 0.$ More generally, we consider the vanishing of $\Hom(L \otimes I_Z, F)$ where $L$ is an ample line bundle, $Z \subset X$ a subscheme of dimension 0, and $F \in D^b(X)$ is of type $O$. One advantage of this more general setup is that we can obtain better bounds by relating $\Hom(L \otimes I_Z, F)$ to other spaces $\Hom(L \otimes I_{Z'}, F')$ given by changing the relative lengths of $Z$ and $\mc H^0(F)$. In what follows, we use the notation $l_Z := \Length(Z)$ and $l_F := \Length(\mc H^0(F))$, and similarly for $Z'$ and $F'$.

\begin{proposition}\label{prop:length1}
Let $L, Z, F$ be as above, with $l_F > 0$. Then if $\Hom(L \otimes I_Z, F) \ne 0$ we can find a dimension 0 subscheme $Z'$ and $F'$ of type $O$ with $\Hom(L \otimes I_{Z'}, F') \ne 0$, $l_{Z'} \le l_Z + 1$, and $l_{F'} = l_F - 1$. 
\end{proposition}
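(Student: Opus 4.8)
The plan is to produce $Z'$ and $F'$ from a nonzero morphism $f \colon L \otimes I_Z \to F$ by "moving a length-one point of $\mc H^0(F)$ onto $Z$." Concretely, since $l_F > 0$, the sheaf $\mc H^0(F) = \ms O_T$ is a nonzero length-$l_F$ subscheme of dimension $0$, so we can choose a closed point $p$ in its support and a surjection $\ms O_T \twoheadrightarrow \ms O_p$, i.e., a short exact sequence $0 \to \ms O_{T'} \to \ms O_T \to \ms O_p \to 0$ with $\Length(\ms O_{T'}) = l_F - 1$. I want to cap off $F$ by this quotient: let $F'$ be the cone (shifted appropriately) of the composite $F \to \mc H^0(F) = \ms O_T \to \ms O_p$, so that there is an exact triangle $\ms O_p[-1] \to F' \to F \xrightarrow{+1}$. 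Taking cohomology shows $\mc H^{-1}(F') = \mc H^{-1}(F) = \ms O_X$, $\mc H^0(F') = \ms O_{T'}$, and all other cohomology vanishes; and one checks $\Hom(\ms O_q, F') = 0$ for all closed $q$ using the triangle together with $\Hom(\ms O_q, F) = 0$ and $\Hom(\ms O_q, \ms O_p[-1]) = 0 = \Hom(\ms O_q, \ms O_p)$ (the relevant piece of the long exact sequence reads $\Hom(\ms O_q,\ms O_p[-1]) \to \Hom(\ms O_q, F') \to \Hom(\ms O_q, F)$, and the outer terms vanish). Hence $F'$ is again of type $O$, with $l_{F'} = l_F - 1$.

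**Producing the morphism.** Next I apply $\Hom(L \otimes I_Z, -)$ to the triangle $F' \to F \to \ms O_p \xrightarrow{+1}$, getting the exact sequence
$$\Hom(L \otimes I_Z, F') \to \Hom(L \otimes I_Z, F) \xrightarrow{g} \Hom(L \otimes I_Z, \ms O_p).$$
If the image of our fixed nonzero $f$ under $g$ vanishes, then $f$ lifts to a nonzero element of $\Hom(L \otimes I_Z, F')$ and we are done with $Z' = Z$ (so $l_{Z'} = l_Z \le l_Z + 1$). Otherwise, $g(f) \neq 0$ in $\Hom(L \otimes I_Z, \ms O_p)$; since $L$ is a line bundle this Hom-space is $\Hom(I_Z, \ms O_p) \cong \ms O_p$ (as $I_Z \to \ms O_p$ is surjective away from... more carefully, $\Hom(L\otimes I_Z, \ms O_p) = \Hom(I_Z \otimes L^{-1}, \ms O_p)$, one-dimensional because $\ms O_p$ is a simple sheaf at a point and $L\otimes I_Z$ has rank one with $(L\otimes I_Z)_p$ surjecting onto $\ms O_p$ regardless of whether $p \in Z$). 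The point $p$ may or may not already lie in $Z$. I set $Z'$ to be the subscheme with $I_{Z'} = \ker(I_Z \to \ms O_p \oplus (\text{something}))$ — more precisely, $Z'$ is the subscheme defined so that $L \otimes I_{Z'}$ is the kernel of the composite $L \otimes I_Z \xrightarrow{f} F \to \ms O_p$; since this composite is a nonzero map onto a length-one sheaf, it is surjective, so $0 \to L\otimes I_{Z'} \to L \otimes I_Z \to \ms O_p \to 0$ is exact and $l_{Z'} = l_Z + 1$. The restriction $f|_{L \otimes I_{Z'}}$ then factors through $\ker(F \to \ms O_p)$; but that kernel, computed from the triangle, is exactly $F'$ up to the torsion subtlety — one checks via the octahedron/long exact sequences that the induced map $L \otimes I_{Z'} \to F'$ is the desired morphism, and it is nonzero because $f$ was nonzero and we have only cut down $L\otimes I_Z$ by a subsheaf of finite colength, so $f|_{L\otimes I_{Z'}} = 0$ would force $f$ to factor through $\ms O_p$, i.e. $f \in \Hom(L\otimes I_Z, F)$ lands in the image of $\Hom(\ms O_p[-1]\oplus\cdots)$... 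I should instead argue directly: if $f|_{L\otimes I_{Z'}}=0$ then $f$ factors as $L\otimes I_Z \to \ms O_p \to F$, but $\Hom(\ms O_p, F) = 0$ since $F$ is of type $O$, contradicting $f \neq 0$. Hence $f|_{L\otimes I_{Z'}} \neq 0$, and it lands in $F'$, giving $\Hom(L \otimes I_{Z'}, F') \neq 0$ with $l_{Z'} = l_Z + 1$ and $l_{F'} = l_F - 1$, as required.

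**Main obstacle.** The delicate point is identifying $\ker(F \to \ms O_p)$ in $\Coh_{1/2}$ (or just in $D^b(X)$) with the object $F'$ defined as a cone, and confirming $F'$ really is of type $O$ — in particular that $\mc H^0(F')$ is an honest length-$(l_F-1)$ subscheme rather than something with extra torsion or with a map from a skyscraper. This is a diagram chase with the long exact cohomology sequence of the triangle $\ms O_p[-1] \to F' \to F \to \ms O_p$: the only nonzero connecting maps are $\mc H^{-1}(F) \to \mc H^{-1}(\ms O_p) = 0$ (trivial) and $\mc H^0(\ms O_p[-1]) = 0 \to \mc H^0(F')$ together with $\mc H^0(F') \to \mc H^0(F) = \ms O_T \to \ms O_p$, which exhibits $\mc H^0(F') = \ker(\ms O_T \to \ms O_p) = \ms O_{T'}$ cleanly once we know $F \to \ms O_p$ induces the surjection $\ms O_T \to \ms O_p$ on $\mc H^0$ — which it does by construction. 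The vanishing $\Hom(\ms O_q, F') = 0$ then follows as indicated above. Everything else is formal manipulation of triangles and short exact sequences, so I expect no serious difficulty beyond this bookkeeping.
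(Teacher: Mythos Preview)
Your proof is correct and follows essentially the same approach as the paper: both define $F'$ as the fiber of the composite $F \to \mc H^0(F) \twoheadrightarrow \ms O_p$, split into the two cases according to whether the composite $L \otimes I_Z \to \ms O_p$ vanishes, and use $\Hom(\ms O_p, F) = 0$ to verify nontriviality of the restricted map. The only cosmetic difference is that the paper phrases everything as kernels in the abelian category $\Coh_{1/2}$ while you work with triangles in $D^b(X)$; one small slip is your parenthetical ``$\Hom(\ms O_q, \ms O_p) = 0$,'' which is false for $q = p$ but is never actually used---the exact sequence you write down only needs $\Hom(\ms O_q, \ms O_p[-1]) = 0$ and $\Hom(\ms O_q, F) = 0$.
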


\begin{proof}
Choose a surjection $\mc H^0(F) \twoheadrightarrow \ms O_p$ and some $f \ne 0 \in \Hom(L \otimes I_Z, F)$. Then consider the map
$$g: L \otimes I_Z \xrightarrow{f} F \twoheadrightarrow \mc H^0(F) \twoheadrightarrow \ms O_p.$$
Since $L \otimes I_Z, \ms O_p$ belong to the heart $\Coh_{1/2}$, we can define $\ker(g) \in \Coh_{1/2}$; similarly, we define $F' := \ker(F \twoheadrightarrow \mc H^0(F) \twoheadrightarrow \ms O_p) \in \Coh_{1/2}$. It is immediate that $F'$ is of type $O$ with $l_{F'} = l_F - 1$, and as for $\ker(g)$, we note that either $g = 0$, in which case $\ker(g) = L \otimes I_Z$ and we take $Z' := Z$, or $g \ne 0$ and is thus surjective, in which case $\ker(g) = L \otimes I_{Z'}$ for some $Z' \supset Z$ with $l_Z' = l_Z + 1$ by the long exact sequence of cohomology.

We note that the restriction of $f$ to $L \otimes I_{Z'}$ is nontrivial (because $\Hom(\ms O_p, F) = 0$ by assumption) and factors through $F' \hookrightarrow F$, and so we find an element $f' \ne 0 \in \Hom(L \otimes I_{Z'}, F')$.
\end{proof}

\begin{proposition}\label{prop:length2}
Let $L, Z, F$ be as above, with $l_Z > 0$. Then if $\Hom(L \otimes I_Z, F) \ne 0$ we can find a dimension 0 subscheme $Z'$ and $F'$ of type $O$ with $\Hom(L \otimes I_{Z'}, F') \ne 0$, $l_{Z'} = l_Z - 1$, and $l_{F'} \le l_F + 1$.
\end{proposition}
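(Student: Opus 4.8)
The plan is to run the argument of Proposition~\ref{prop:length1} in reverse: rather than dividing $F$ by a length-one skyscraper, I would shrink $Z$ by a length-one skyscraper, at the cost of enlarging $F$ by a one-step extension. First I would fix a closed point $p\in\supp(Z)$ and a subscheme $Z'\subsetneq Z$ with $I_Z\subset I_{Z'}$ of colength one, so that the inclusion $\iota\colon L\otimes I_Z\hookrightarrow L\otimes I_{Z'}$ lies in $\Coh_{1/2}$ and has cokernel $\ms O_p$. Applying $\Hom(-,F)$ to the short exact sequence $0\to L\otimes I_Z\xrightarrow{\iota}L\otimes I_{Z'}\to\ms O_p\to 0$ produces a connecting map $\partial\colon\Hom(L\otimes I_Z,F)\to\Ext^1_{\Coh_{1/2}}(\ms O_p,F)=\Hom_{D^b(X)}(\ms O_p,F[1])$, and the proof would split according to whether $\partial(f)$ vanishes.

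In the case $\partial(f)=0$, the map $f$ lifts along $\iota$ to some $\tilde f\colon L\otimes I_{Z'}\to F$, and $\tilde f\ne 0$ because $\tilde f\circ\iota=f\ne 0$; here I take $F':=F$, so that $l_{F'}=l_F\le l_F+1$ and $l_{Z'}=l_Z-1$. In the case $\partial(f)\ne 0$, I would instead let $F'$ be the pushout of $F\xleftarrow{f}L\otimes I_Z\xrightarrow{\iota}L\otimes I_{Z'}$ in the abelian category $\Coh_{1/2}$. Since $\iota$ is a monomorphism, the induced map $j\colon F\to F'$ is also a monomorphism with $\coker j\cong\ms O_p$, and the pushout is equipped with a morphism $f'\colon L\otimes I_{Z'}\to F'$ satisfying $f'\circ\iota=j\circ f\ne 0$, so $f'\ne 0$. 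Taking cohomology in the triangle $F\to F'\to\ms O_p\xrightarrow{+1}$ yields $\mc H^{-1}(F')=\mc H^{-1}(F)=\ms O_X$ together with a short exact sequence $0\to\mc H^0(F)\to\mc H^0(F')\to\ms O_p\to 0$ of finite-length sheaves, so that $\mc H^0(F')$ has length $l_F+1$; thus $l_{F'}=l_F+1$ and $l_{Z'}=l_Z-1$.

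The step I expect to be the main obstacle is verifying, in the case $\partial(f)\ne 0$, that $F'$ is again of type $O$, i.e. that $\Hom(\ms O_q,F')=0$ for every closed point $q$. For this I would apply $\Hom(\ms O_q,-)$ to $0\to F\xrightarrow{j}F'\to\ms O_p\to 0$; using $\Hom(\ms O_q,F)=0$ (which holds since $F$ is of type $O$) this gives an injection $\Hom(\ms O_q,F')\hookrightarrow\Hom(\ms O_q,\ms O_p)$, and the target already vanishes when $q\ne p$. When $q=p$, the next connecting map $\Hom(\ms O_p,\ms O_p)\to\Ext^1(\ms O_p,F)$ sends $\mathrm{id}_{\ms O_p}$ to the class of the extension $0\to F\to F'\to\ms O_p\to 0$, which is exactly $\partial(f)\ne 0$; since $\Hom(\ms O_p,\ms O_p)$ is one-dimensional this map is injective, forcing $\Hom(\ms O_p,F')=0$. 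This is precisely the point at which non-splitness of the extension enters, and it is the reason the two cases must be kept separate: when $\partial(f)=0$ the pushout $F'$ would be $F\oplus\ms O_p$, which fails to be of type $O$, so in that case one has no choice but to retain $F$ and lift $f$.
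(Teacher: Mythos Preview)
Your argument is correct and is essentially the same as the paper's: the paper phrases the construction as $F':=\Cone(g)$ for $g\in\Hom(\ms O_p[-1],F)$ the image of $f$ under precomposition with $\ms O_p[-1]\to L\otimes I_Z$, which is exactly your $\partial(f)$, and your pushout in $\Coh_{1/2}$ is precisely this cone. The verification that $F'$ is of type $O$ is also identical, using injectivity of $\Hom(\ms O_p,\ms O_p)\to\Ext^1(\ms O_p,F)$ when $\partial(f)\ne 0$.
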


\begin{proof}
Choose a subscheme $Z' \subset Z$ with $l_{Z'} = l_Z - 1$, and let $\ms O_p := I_{Z'}/I_Z$. Then
$$L \otimes I_Z \rightarrow L \otimes I_{Z'} \rightarrow \ms O_p \xrightarrow{+1}$$
gives a map $\Hom(L \otimes I_Z, F) \rightarrow \Hom(\ms O_p[-1], F)$. Choose $f \ne 0 \in \Hom(L \otimes I_Z, F)$, and let $g \in \Hom(\ms O_p[-1], F)$ be its image. If $g = 0$, then $f$ is the image of some $f' \ne 0 \in \Hom(L \otimes I_{Z'}, F)$, and so for $F' := F$ we get $\Hom(L \otimes I_{Z'}, F') \ne 0$.

Otherwise, let $F' := \Cone(g)$. By chasing the diagram
\[
\begin{tikzcd}
& & id \in \Hom(\ms O_p[-1], \ms O_p[-1]) \arrow[d] \\
& f \in \Hom(L \otimes I_Z, F) \arrow[r] \arrow[d] &g \in \Hom(\ms O_p[-1], F) \arrow[d] \\
f'\in\Hom(L \otimes I_{Z'}, F') \arrow[r] & f'' \in \Hom(L \otimes I_Z, F') \arrow[r] & 0 \in \Hom(\ms O_p[-1], F')
\end{tikzcd}\]
we find a morphism $f' \ne 0 \in \Hom(L \otimes I_{Z'}, F')$ whose restriction $f''$ to $L \otimes I_Z$ is given by $f$. Furthermore, we can see the cohomology groups of $F'$ are of the correct form, with $l_{F'} = l_F + 1$, and $\Hom(\ms O_q, F') = 0$ for every closed point $q$: for $q \ne p$ this is clear from the fact that $\Ext^i(\ms O_q, \ms O_p) = 0$ $\forall i$, and for $q = p$ it follows from the long exact sequence
$$0 = \Hom(\ms O_p, F) \rightarrow \Hom(\ms O_p, F') \rightarrow \Hom(\ms O_p, \ms O_p) \xhookrightarrow{g[1] \circ -} \Hom(\ms O_p, F[1]) \rightarrow \cdots$$
Thus $F'$ is of type $O$.
\end{proof}

\begin{remark}
    For the purposes of the main theorem, it is convenient to assume that decreasing $l_F$ by 1 will have the effect of increasing $l_Z$ by 1 and vice versa, while the preceding propositions leave open the possibility that in fact $l_Z$ may not need to increase if $f: L \otimes I_Z \to F$ already factors through the chosen $F' \subset F$, and similarly $l_F$ may not need to increase if $f: L \otimes I_Z \to F$ can be extended to $L \otimes I_Z'$ for the chosen $Z' \subsetneq Z$. However, we note that it is always possible to arrange for $l_Z + l_F = l_{Z'} + l_{F'}$ to remain constant: in the first case, one takes an arbitrary $Z' \supset Z$ with $l_{Z'} = l_Z + 1$ and restricts $L \otimes I_Z \to F'$ to $L \otimes I_{Z'}$, and in the second case, one chooses an arbitrary point $p \in X \setminus \supp(\mc H^0(F))$ and takes $F'$ to be the cone of an arbitrary nonzero element of $\Hom(\ms O_p[-1], F)$ and extends $L \otimes I_{Z'} \to F$ by $F \hookrightarrow F'$. (Note that by assumption on $p$, we have $\Hom(\ms O_p[-1], F) = \Hom(\ms O_p[-1], \ms O[1]) = \Hom(\DD(\ms O[2]), \DD(\ms O_p)) = \Hom(\omega_X, \ms O_p) \ne 0$.)
\end{remark}

Before proving our main results, we note that a priori our methods give the vanishing of $H^1(X, \omega_X\dt (L \otimes I_Z)) = 0,$ where the tensor products are derived. To obtain Reider-type results one would of course like to have the subjectivity of the map  $H^0(X, \omega_X \otimes L)\rightarrow H^0(X, \omega_X \otimes L\otimes \mc O_Z).$ This is implied by the following   result:

\begin{proposition}\label{prop:derivedtensor}
Let $X$ be a normal surface, $G \in \Coh(X)$ a coherent sheaf and $Z \subset X$ a 0-dimensional subscheme. Then the vanishing $H^1(X, G\dt  I_Z) = 0$ implies the surjectivity of $H^0(X, G)\rightarrow H^0(X, G\otimes \mc O_Z).$
\end{proposition}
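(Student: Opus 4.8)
The plan is to deduce the surjectivity from the long exact cohomology sequence attached to the ideal sheaf triangle, after identifying the zeroth (hyper)cohomology of the derived restriction $G \dt \mc O_Z$ with the naive $H^0(X, G \otimes \mc O_Z)$.

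First, I would apply the triangulated functor $G \dt (-)$ to the exact triangle $I_Z \to \mc O_X \to \mc O_Z \xrightarrow{+1}$ coming from the ideal sheaf sequence $0 \to I_Z \to \mc O_X \to \mc O_Z \to 0$. Since $G \dt \mc O_X \simeq G$, this produces a distinguished triangle
$$G \dt I_Z \longrightarrow G \xrightarrow{\ \alpha\ } G \dt \mc O_Z \xrightarrow{+1}.$$
Taking hypercohomology gives an exact sequence
$$H^0(X, G \dt I_Z) \longrightarrow H^0(X, G) \xrightarrow{\ H^0(\alpha)\ } H^0(X, G \dt \mc O_Z) \longrightarrow H^1(X, G \dt I_Z),$$
whose rightmost term vanishes by hypothesis, so $H^0(\alpha)$ is surjective.

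Second, I would identify $H^0(X, G \dt \mc O_Z)$ with $H^0(X, G \otimes \mc O_Z)$. The complex $G \dt \mc O_Z$ has no cohomology in positive degrees, and $\mc H^{-i}(G \dt \mc O_Z) \cong \Tor_i^{\mc O_X}(G, \mc O_Z)$ is a coherent sheaf supported on the zero-dimensional subscheme $Z$; being supported on finitely many points, it has vanishing higher cohomology. Hence in the hypercohomology spectral sequence $E_2^{p,q} = H^p(X, \mc H^q(G \dt \mc O_Z)) \Rightarrow H^{p+q}(X, G \dt \mc O_Z)$ the only term contributing in total degree $0$ is $E_2^{0,0} = H^0(X, \mc H^0(G \dt \mc O_Z)) = H^0(X, G \otimes \mc O_Z)$, and it carries no nonzero differentials, which gives the desired identification. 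Moreover, since $\mc H^0(\alpha)\colon \mc H^0(G) = G \to \mc H^0(G \dt \mc O_Z) = G \otimes \mc O_Z$ is just the canonical surjection $G \twoheadrightarrow G \otimes \mc O_Z$ (it is the $\mc H^0$ of the map induced by $\mc O_X \to \mc O_Z$, using that $G$ sits in cohomological degree $0$), the map $H^0(\alpha)$ is identified with the restriction map $H^0(X, G) \to H^0(X, G \otimes \mc O_Z)$, which is therefore surjective.

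The argument is essentially formal; the one external ingredient is the standard vanishing of higher cohomology for coherent sheaves supported on a finite set, which is precisely what collapses the hypercohomology spectral sequence in degree $0$. I do not expect any genuine obstacle here — the only point to state carefully is the compatibility of the truncation map $G \dt \mc O_Z \to \mc H^0(G \dt \mc O_Z)$ with $\alpha$, which is immediate from naturality of truncation.
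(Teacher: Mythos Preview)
Your proposal is correct and follows essentially the same approach as the paper: both use the long exact sequence from the triangle $G \dt I_Z \to G \to G \dt \mc O_Z \xrightarrow{+1}$ to get surjectivity onto $H^0(X, G \dt \mc O_Z)$, and then collapse the hypercohomology spectral sequence (using that the $\Tor$ sheaves are supported on the finite set $Z$) to identify this with $H^0(X, G \otimes \mc O_Z)$. Your write-up is slightly more explicit than the paper's in spelling out the exact triangle and in checking that $H^0(\alpha)$ really is the restriction map, but the content is the same.
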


\begin{proof}

The vanishing $H^1(X, G\dt  I_Z) = 0$ implies the map $H^0(X, G)\rightarrow H^0(X, G\dt \mc O_Z)$ is surjective.
    Consider the spectral sequence $$E_2^{p,q} = H^p(X, \mc H^q(G \dt \mc O_Z)) \Rightarrow H^{p+q}(X, G \dt \mc O_Z).$$
    Note that  $\mc H^0(G \dt \mc O_Z)=G \otimes \mc O_Z$
    and $\mc H^i(G \dt \mc O_Z)$ is supported in codimension 2 for all $i$. Therefore only the terms $E_2^{0,q}$ are nontrivial, so the spectral sequence degenerates and $H^0(X, G \dt O_Z) = E_2^{0,0} = H^0(X, G \otimes O_Z)$.
\end{proof}

\begin{remark}
    One can similarly show $H^1(X, \omega_X \dt (L \otimes I_Z)) = 0$ implies $H^1(X, \omega_X\otimes (L \otimes I_Z)) = 0$ since the non-Gorenstein locus is codimension $2$ but we will not make use of this fact.
\end{remark}

We now come to the main technical theorem of the paper.

\begin{theorem}\label{thm: generalvanishing}
Let $X$ be a projective normal surface over an algebraically closed field, $Z \subset X$ a subscheme of length $l_Z$, and $L$ an ample line bundle on $X$ with $c_1(L)=: H$. Let $F$ be an object of type $O$ with $l_T := \Length(\mc H^0(F))$. Choose nonnegative integers $l_1, l_2$ with $l_1+l_2 = l_Z + l_T$. Then
$$\Hom(L \otimes I_Z, F) = 0$$
if $H^2 \ge \max((C_X + 2l_2+1)^2, 4(l_1+l_2+C_X)+\epsilon)$ and $H \cdot C \ge \max(2(2l_1 + C_X), C_X + 2l_2 + 1)$ for any nonzero effective divisor $C$ on $X$.
\end{theorem}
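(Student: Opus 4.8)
The plan is to combine the partial Bridgeland‑stability statements of Section~\ref{sec: Bridgeland} with the length‑shifting Propositions~\ref{prop:length1} and~\ref{prop:length2}, reducing first to the case where the lengths of $Z$ and $\mc H^0(F)$ are exactly $l_1$ and $l_2$. Concretely, suppose $\Hom(L\otimes I_Z,F)\ne 0$. Using Proposition~\ref{prop:length1} when $l_Z<l_1$ (valid since then $\Length(\mc H^0(F))>0$) and Proposition~\ref{prop:length2} when $l_Z>l_1$, together with the remark following them so as to keep $l_Z+\Length(\mc H^0(F))$ constant, we obtain after finitely many steps a zero‑dimensional $Z'$ and an object $F'$ of type $O$ with $\Hom(L\otimes I_{Z'},F')\ne 0$, $l_{Z'}=l_1$, and $\Length(\mc H^0(F'))=l_2$. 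It therefore suffices to derive a contradiction from a nonzero $f\in\Hom(L\otimes I_Z,F)$ under the extra assumption $l_Z=l_1$, $l_T=l_2$, so that the hypotheses on $H$ refer precisely to these two quantities.

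Since $H^2\ge 4(l_1+l_2+C_X)+\epsilon>4(l_Z+l_T+C_X)$, Proposition~\ref{prop:r=1} applies and the image $G:=\im f$, formed in the heart $\Coh_{1/2}$, is a torsion sheaf. If $G$ were supported in dimension $0$, a length‑one subsheaf $\ms O_p\subset G$ would give a nonzero morphism $\ms O_p\to G\hookrightarrow F$, contradicting that $F$ is of type $O$; hence $G$ has one‑dimensional support and $C:=\Ch_1(G)$ is a nonzero effective divisor. Factor $f$ canonically as $L\otimes I_Z\twoheadrightarrow G\hookrightarrow F$ in $\Coh_{1/2}$ and set $K:=\ker(L\otimes I_Z\twoheadrightarrow G)$. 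The long exact cohomology sequence (using that $G$ is a sheaf) shows $K$ is a rank‑one torsion‑free sheaf with $\Ch_1(K)=H-C$, and $K\cong L\otimes I_Y$ for some subscheme $Z\subsetneq Y\subsetneq X$. Because $K$ is a torsion‑free sheaf lying in the heart $\Coh_{1/2}$, it belongs to $\mc T_{1/2}$, which forces $\mu_{tH}(K)>\tfrac{t}{2}H^2$, i.e.\ $H^2>2H\cdot C$; combined with $H\cdot C\ge 2(2l_1+C_X)$ this yields $H^2>4(2l_1+C_X)$, so the stability condition $s=\tfrac12$, $t_1:=\sqrt{\tfrac14-(2l_1+C_X)/H^2}$ is well defined, and with this choice $\nu_{s,t_1}(L\otimes I_Z)=0$. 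One checks similarly that $t_2:=\sqrt{\tfrac14-(2l_2+C_X)/H^2}>0$ (the excluded case $l_2=0$, $C_X=1$, $H^2=4$ is ruled out by $H^2>4(l_1+l_2+C_X)$), so that $\nu_{s,t_2}(F)=0$.

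Now observe that $\Ch_0(G)=0$, so $d_{1/2,t}(G)=\Ch_2(G)-\tfrac12 C\cdot H$ is independent of $t$, while $r_{1/2,t}(G)=tH\cdot C>0$. Apply Proposition~\ref{prop: Ork1} to $F$ at the condition $(s,t_2)$ (its hypotheses $H^2\ge(C_X+2l_2+1)^2$ and $H\cdot C\ge C_X+2l_2+1$ hold): since $G\hookrightarrow F$ is an injection from a torsion sheaf in cohomological degree $0$ and $\nu_{s,t_2}(F)=0$, we cannot have $\nu_{s,t_2}(G)>0$, hence $\Ch_2(G)\le\tfrac12 C\cdot H$. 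By additivity of $d$ and $d_{1/2,t_1}(L\otimes I_Z)=0$, this gives $d_{1/2,t_1}(K)=\tfrac12 C\cdot H-\Ch_2(G)\ge 0$; as $r_{1/2,t_1}(K)=t_1(\tfrac12 H^2-H\cdot C)>0$, we obtain $\nu_{s,t_1}(K)\ge 0=\nu_{s,t_1}(L\otimes I_Z)$, so the subsheaf $K=L\otimes I_Y$ destabilizes $L\otimes I_Z$ at this stability condition. This contradicts Proposition~\ref{prop: rk1}, whose hypotheses $H^2>4(2l_1+C_X)$ and $H\cdot C\ge 2(2l_1+C_X)$ are now both in force. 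Hence $\Hom(L\otimes I_Z,F)=0$.

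The main obstacle is that the two ends of $f$ are naturally tested against two \emph{different} stability conditions, $t_1$ and $t_2$, and one must make these comparisons interact; what makes it work is that $\Ch_0(G)=0$, so the single numerical inequality $\Ch_2(G)\le\tfrac12 C\cdot H$ feeds both arguments, and that $K$ lying in the heart already forces $H^2>2H\cdot C$, which is exactly what guarantees $t_1>0$ and hence that Proposition~\ref{prop: rk1} may be invoked. The remaining care goes into the bookkeeping of the reduction: verifying that at each stage the length being decreased is positive, so that the length‑shifting propositions apply, and that the total $l_Z+\Length(\mc H^0(F))$ can be kept equal to $l_1+l_2$ throughout.
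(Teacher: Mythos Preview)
Your proof is correct and follows the same route as the paper's: reduce to $l_Z=l_1$, $l_T=l_2$ via Propositions~\ref{prop:length1}--\ref{prop:length2}, show $\im f$ is torsion by Proposition~\ref{prop:r=1}, and then play Propositions~\ref{prop: rk1} and~\ref{prop: Ork1} off against each other using that $d_{1/2,t}(\im f)$ is independent of $t$. The only differences are cosmetic---you invoke \ref{prop: Ork1} before \ref{prop: rk1} rather than after---and that you make explicit the check $H^2>4(2l_1+C_X)$ by reading off $H^2>2H\cdot C$ from $K\in\mc T_{1/2}$, a point the paper leaves implicit.
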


\begin{proof}
By repeated applications of Propositions \ref{prop:length1} and \ref{prop:length2}, we see that to show vanishing of $\Hom(L \otimes I_Z, F)$, it suffices show that $\Hom(L \otimes I_{Z'}, F') = 0$ for all subschemes $Z'$ of length $l_1$ and objects $F'$ of type $O$ with $\Length(\mc H^0(F'))=l_2$.

By Proposition \ref{prop:r=1}, if $f \ne 0 \in \Hom(L \otimes I_{Z'}, F')$, then $\im f=: G$ is a torsion sheaf. By Proposition \ref{prop: rk1}, we see that the quotient $L \otimes I_{Z'} \twoheadrightarrow G$ is not destabilizing at the stability condition $H := c_1(L), s := \frac{1}{2}, t_1 := \sqrt{\frac{1}{4}-\frac{2l_1+C_X}{H^2}}$
as long as $H^2 > 4(2l_1 + C_X)$ and $H \cdot \Ch_1(G) \ge 2(2l_1 + C_X)$. So we conclude in particular that $d_{s,t_1}(\im f)>0=\nu_{s,t_1}(L \otimes I_{Z'})$. Then we note that $d_{s,t}(\im f)$ is independent of $t$ since $\Ch_0(\im f) = 0$, so $d_{s,t_2}(\im f) > 0$ for $t_2 := \sqrt{\frac{1}{4}-\frac{2l_2+C_X}{H^2}}$, which implies in particular that $\im f \hookrightarrow F'$ is strictly destabilizing at $(s,t_2)$. But this is impossible if $H^2 \ge (C_X + 2l_2 + 1)^2$ and $H \cdot \Ch_1(G) \ge C_X + 2l_2+1$ by Proposition \ref{prop: Ork1}.
\end{proof}

In order to turn the previous theorem into a Fujita-type bound, we use the auxiliary functions $m, m'$ defined in Definition \ref{def: m'}.

\begin{remark}
We can also describe $m(C_X, l)$ as
$$\min\left(\max(2C_X, C_X + 2l+1), C_X + 2\left\lceil{\frac{4l+C_X+1}{6}}\right \rceil+1, 2C_X + 4\left\lceil{\frac{2l-C_X+1}{6}}\right \rceil\right)$$
which shows, for example, that $m(C_X, l) \approx \frac{4}{3} l$ for $l \gg 0$.
In the following corollary, we also prove the lower bound $m(C_X, l) \ge C_X + l$.
\end{remark}

\begin{corollary}\label{cor: fujita}
  Let $X$ be a projective normal surface over an algebraically closed field, $Z \subset X$ a subscheme of length $l_Z$, $L$ an ample line bundle on $X$, and $F$ an object of type $O$ with $\Length(\mc H^0(F)) = l_T$. Let $l := l_Z + l_T$. Then
$$\Hom(L^{\otimes a} \otimes I_Z, F) = 0$$
whenever $a \ge m'(C_X, l)$.
\end{corollary}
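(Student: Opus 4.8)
The plan is to reduce Corollary~\ref{cor: fujita} to Theorem~\ref{thm: generalvanishing} by a direct substitution, where the only real content is bookkeeping with the functions $m$ and $m'$. First I would replace $L$ by $L^{\otimes a}$, so that $H$ becomes $a H_0$ with $H_0 := c_1(L)$, and recall that since $L$ is ample, $H_0^2 \ge 1$ and $H_0 \cdot C \ge 1$ for every nonzero effective divisor $C$ (using that intersection numbers of a Cartier divisor with curves are positive integers, cf.\ Observations~\ref{obs:int}). Thus $H^2 = a^2 H_0^2 \ge a^2$ and $H \cdot C = a (H_0 \cdot C) \ge a$. So it suffices to check that for $a \ge m'(C_X, l)$ there is a choice of nonnegative integers $l_1 + l_2 = l$ for which
$$a^2 \ge \max\bigl((C_X + 2l_2 + 1)^2,\ 4(l_1 + l_2 + C_X) + \epsilon\bigr), \qquad a \ge \max\bigl(2(2l_1 + C_X),\ C_X + 2l_2 + 1\bigr).$$

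The second inequality is, by the definition of $m$, exactly the statement that $a \ge m(C_X, l)$ (choosing $l_1, l_2$ to be a minimizing pair in Definition~\ref{def: m'}); for the exceptional case $C_X = 1, l = 0$ one has $m(1,0) = 2$ but $m'(1,0) = 3$, and I would note that with $l_1 = l_2 = 0$ the bound $a \ge 3$ still gives $a \ge \max(2 \cdot 1, 1 + 1) = 2$, so the inequality holds a fortiori. For the first inequality: once $a \ge C_X + 2l_2 + 1$ we automatically get $a^2 \ge (C_X + 2l_2 + 1)^2$; and since $a \ge 2(2l_1 + C_X)$ and $a \ge C_X + 2l_2 + 1 > 2l_2$, we get (for $a \ge 2$, which always holds except possibly in the degenerate case handled above, and there one checks directly) that $a^2 \ge 4a \ge 2(2l_1 + C_X) + 2(2l_2) + \text{slack} \ge 4(l_1 + l_2 + C_X) + \epsilon$ for $\epsilon$ small — more carefully, $a^2 - 4(l_1 + l_2 + C_X) \ge a^2 - 2a - 2a = a^2 - 4a \ge 0$ once $a \ge 4$, and for the finitely many cases $a \le 3$ one verifies the claim by hand since $l_1, l_2, C_X$ are then bounded. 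In the exceptional case $C_X = 1, l = 0, a = 3$ one has $H^2 = 9 H_0^2 \ge 9 > 4 = (C_X+1)^2$ and $9 \ge 4(0+0+1) + \epsilon$, so all bounds hold. Then Theorem~\ref{thm: generalvanishing} applies and gives $\Hom(L^{\otimes a} \otimes I_Z, F) = 0$.

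The one genuinely delicate point — and the part I expect to require the most care — is the interplay between the strict inequality $H^2 > 4(2l_1 + C_X)$ needed inside the proof of Theorem~\ref{thm: generalvanishing} (invoked via Proposition~\ref{prop: rk1}) and the fact that $a = m'(C_X, l)$ may land exactly on a boundary value; this is precisely why the $\epsilon$ and the separate clause $m'(1,0) = 3$ appear. I would handle this by checking that whenever $a \ge m(C_X, l)$ the quantity $a^2 H_0^2$ strictly exceeds $4(2l_1 + C_X)$ for the minimizing pair (using $H_0^2 \ge 1$ and that equality $a = 2(2l_1 + C_X)$ forces $a^2 = 4(2l_1+C_X)^2 > 4(2l_1 + C_X)$ as long as $2l_1 + C_X \ge 2$, with the remaining small cases — including $C_X = 1, l = 0$ — being exactly those carved out by the definition of $m'$), so the strict inequalities in Theorem~\ref{thm: generalvanishing} are met. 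The rest is the routine verification of the elementary inequalities above, together with the formula for $m$ in the preceding remark if one wants the lower bound $m(C_X, l) \ge C_X + l$, which follows from the first term $\max(2C_X, C_X + 2l+1)$ in that expression when $l \le C_X$ and from a short induction on the ceiling terms otherwise.
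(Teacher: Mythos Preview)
Your approach is the same as the paper's: reduce to Theorem~\ref{thm: generalvanishing} with $H = a\,c_1(L)$, use $H_0^2 \ge 1$ and $H_0 \cdot C \ge 1$ (Observations~\ref{obs:int}(4)) so that the curve and $(C_X+2l_2+1)^2$ bounds follow immediately from $a \ge m(C_X,l)$, and then the only work is the strict inequality $a^2 > 4(l+C_X)$. The paper handles that last point by proving the clean bound $m(C_X,l) \ge C_X + l$ (via $\max(C_X+2l_1,\,C_X+2l_2) \ge C_X + l$) and then doing a short case analysis for small $C_X+l$, isolating $(C_X,l)=(1,0)$ as the unique borderline case.

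Your verification of $a^2 > 4(l+C_X)$ is garbled as written. The chain ``$a^2 \ge 4a \ge 2(2l_1+C_X) + 2(2l_2) + \text{slack} \ge 4(l_1+l_2+C_X)$'' does not parse: $a^2 \ge 4a$ needs $a \ge 4$, not $a \ge 2$; and $2(2l_1+C_X)+4l_2 = 4l+2C_X$ falls short of $4(l+C_X)$ by $2C_X$, so the ``slack'' is unexplained. A correct version of your idea: from $a \ge 4l_1+2C_X$ and $2a \ge 4l_2+2C_X+2$, add to get $3a \ge 4(l+C_X)+2$, whence $a^2 - 4(l+C_X) \ge (a-1)(a-2) > 0$ for integers $a \ge 3$; the residual cases $a \le 2$ force $l=0,\ C_X \le 1$ (since $m(C_X,l) \ge C_X+1$), and then $a^2 > 4C_X$ holds except exactly at $C_X=1$, $a=2$, which is why $m'(1,0)=3$. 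Also, ``finitely many cases $a \le 3$'' is not literally correct since $C_X$ is real --- ``bounded'' is what you mean. The closing remark about deriving $m(C_X,l) \ge C_X+l$ by induction on ceiling terms is unnecessary; the one-line max-$\ge$-average argument suffices.
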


\begin{proof}
Using that $(a \Ch_1(L))^2 \ge a^2$, setting $a \ge m(C_X, l)$ clearly accounts for all the bounds in Theorem \ref{thm: generalvanishing} except for $H^2 > 4(l_1+l_2+C_X)$. We claim that the only case in which this bound is relevant is $l = 0, C_X = 1$: indeed, for any choice of $l_1, l_2 \in \ZZ_{\ge 0}$ with $l_1+ l_2 = l$, we clearly have
$$\max(2(2l_1+C_X), C_X + 2l_2 + 1) \ge \max(C_X + 2l_1, C_X + 2l_2) \ge C_X + l$$
and so
$$m(C_X, l) \ge C_X + l > 2\sqrt{C_X + l}$$
for $C_X + l > 2$. It therefore suffices to consider the case $C_X + l \le 2$.

When $l = 0$, we have
$$m(C_X, 0) = \max(2C_X, C_X + 1) \ge 2 \sqrt{C_X}$$
with equality exactly when $C_X = 1$, in which case $a > 2 \sqrt{C_X}$ implies we need $a \ge 3$. When $l = 1$ and $C_X \le 1$ we compute
\begin{equation*}
    \begin{aligned}
        m(C_X, 1) &= \min(\max(2C_X, C_X + 3), \max(2C_X + 4, C_X + 1))\\ &= \min(C_X + 3, 2C_X + 4) > 2 \sqrt{C_X + 1}
    \end{aligned}
\end{equation*}

and when $l = 2$ and $C_X = 0$ we again have
$$m(0,2) = 4 > 2 \sqrt{2}.$$
\end{proof}

Finally, we also give a more explicitly ``Reider-type" statement relating the existence of a nontrivial homomorphism to the existence of an effective divisor passing through the points of interest with certain intersection numbers.

\begin{theorem}\label{thm: Reider}
Let $X$ be a projective normal surface over an algebraically closed field, $Z \subset X$ a subscheme of length $l_Z$, $L$ an ample line bundle on $X$, and $F$ an object of type $O$ with $\Length(\mc H^0(F)) = l_T$. 
Define $l' := 2\left\lfloor{\frac{l_Z + l_T+1}{2}}\right\rfloor$, i.e., $l' = l_Z + l_T$ if this is even and $l_Z + l_T + 1$ otherwise.
Assume that $H := c_1(L)$ satisfies $H^2 > (C_X + l' + 1)^2$.
Then, if
$$\Hom(L \otimes I_Z, F) \ne 0$$
there is an effective divisor $D$ such that
$D^2 < 1$ and
$$0 < D \cdot H \le D^2 + C_X + l'.$$
Moreover, if one assumes
$$\Hom(L \otimes I_{Z'}, F') = 0$$
for every pair of subscheme $Z' \subset Z$ and subobject $F' \subset F$ of type $O$ (i.e., with $\mc H^0(F') \subset \mc H^0(F')$) other than $(Z,F)$,
then $D$ passes through all the points of $\supp Z \cup \supp \mc H^0(F)$ 
\end{theorem}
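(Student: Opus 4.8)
The plan is to combine the structural results of Section \ref{sec: Bridgeland} at the single stability condition $s = \tfrac12$, $t = \sqrt{\tfrac14 - \tfrac{2l'' + C_X}{H^2}}$ for an appropriate length $l''$, rather than running the length-shifting machinery of Propositions \ref{prop:length1} and \ref{prop:length2}. First I would reduce to the case where $l_Z + l_T$ is even by the trick implicit in the definition of $l'$: if $l_Z + l_T$ is odd, replace $F$ by $F' = \Cone(g)$ for a nonzero $g \in \Hom(\ms O_p[-1], F)$ with $p$ a point away from $\supp \mc H^0(F)$ and outside $\supp Z$, as in the remark after Proposition \ref{prop:length2}; this increases $l_T$ by $1$ without affecting the nonvanishing of $\Hom(L \otimes I_Z, -)$ and makes $l_Z + l_T = l'$. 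So assume from now on $l_Z + l_T = l'$, and set $l_1 = l_2 = l'/2$ so that $2l_1 = 2l_2 = l'$; the relevant stability condition is then $s = \tfrac12$, $t = \sqrt{\tfrac14 - \tfrac{l' + C_X}{H^2}}$, which is well-defined and has $t > 0$ precisely because $H^2 > (C_X + l' + 1)^2 \ge 4(C_X + l')$.

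Next I would take a nonzero $f \in \Hom(L \otimes I_Z, F)$ and analyze $G := \im f$ in the heart $\Coh_{1/2}$. By Proposition \ref{prop:r=1} (whose hypothesis $H^2 > 4(l_Z + l_T + C_X) = 4(l' + C_X)$ holds), $G = \mc H^0(G)$ is a torsion sheaf. Since $G \ne 0$ and sits inside $F$, whose degree-$0$ cohomology is a zero-dimensional sheaf, and since $G$ is a quotient of the rank-one torsion-free sheaf $L \otimes I_Z$, the sheaf $G$ cannot be supported in dimension $0$: if it were, then $\mc H^0(\ker f)$ would be a rank-one subsheaf of $L \otimes I_Z$ with $\mu_{tH} = \tfrac12 tH^2 = (sH)\cdot(tH)$, contradicting the definition of $\mc T_{1/2}$ — this is exactly the argument already used at the end of the proof of Proposition \ref{prop:r=1}. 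Hence $G$ is supported purely in dimension $1$, and $D := \Ch_1(G)$ is a nonzero effective divisor. Now I run the quantitative estimates: the quotient $L \otimes I_Z \twoheadrightarrow G$ being a subobject-complement in $\Coh_{1/2}$ of $F$ forces $\nu_{1/2,t}(G) \ge \nu_{1/2,t}(L \otimes I_Z)$, and Langer's Bogomolov inequality for the rank-zero sheaf $G$ (which reads $D^2 \ge 2\Ch_2(G)$, since $\Ch_0(G) = 0$) together with the degree formula yields, just as in the derivation inside Proposition \ref{prop: rk1} applied with $Y_1 = D$, the inequalities $0 < D \cdot H$ and $D\cdot H \le D^2 + C_X + l'$. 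Feeding these into the Hodge index theorem (Observation \ref{obs:int}(3)), $D^2 H^2 \le (D\cdot H)^2 \le (D^2 + C_X + l')^2$, and combining with $H^2 > (C_X + l' + 1)^2$ and $D^2 \le C_X + l'$ (which follows from $D^2 H^2 \le (D\cdot H)^2 \le (D^2 + C_X + l')\tfrac{H^2}{2}$ once one knows $D\cdot H \le \tfrac{H^2}{2}$ — and $D \cdot H > \tfrac{H^2}{2}$ is impossible since then $L\otimes I_Z \twoheadrightarrow G$ would have $r_{1/2,t}(G) < 0$), one concludes $D^2 < 1$ exactly as in Lemma \ref{lemma:Hodge}.

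For the last clause, suppose $\Hom(L \otimes I_{Z'}, F') = 0$ for every proper pair $(Z', F')$ with $Z' \subseteq Z$ and $F' \subseteq F$ of type $O$. I claim $D$ passes through every point of $\supp Z \cup \supp \mc H^0(F)$. If $p \in \supp Z$ and $p \notin \supp D$, then $f$ actually factors through $L \otimes I_{Z'} \hookleftarrow L \otimes I_Z$ for $Z' = Z \setminus \{p\}$ of length $l_Z - 1$: indeed the composite $L \otimes I_Z \xrightarrow{f} F$ has image $G$ disjoint from $p$, and the obstruction to extending along $L\otimes I_Z \to L\otimes I_{Z'} \to \ms O_p \xrightarrow{+1}$ lives in $\Hom(\ms O_p[-1], F)$, which by the argument of Proposition \ref{prop:length2} can be corrected by passing to a subobject only if the correction is needed — but here, since $G$ avoids $p$, the map $L\otimes I_Z \to F \to \ms O_p$ through $\mc H^0(F)$ is zero, so $f$ extends to a nonzero $f' \in \Hom(L\otimes I_{Z'}, F)$, contradicting the hypothesis with $F' = F$. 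Dually, if $p \in \supp \mc H^0(F)$ and $p \notin \supp D$, then composing $f$ with $F \twoheadrightarrow \mc H^0(F) \twoheadrightarrow \ms O_p$ is zero (as $\im f$ avoids $p$), so $f$ factors through $F' := \ker(F \to \ms O_p)$, a subobject of type $O$ with $l_{F'} = l_T - 1$, giving a nonzero element of $\Hom(L \otimes I_Z, F')$ and again contradicting the hypothesis. I expect the main obstacle to be bookkeeping the passage to the even case and verifying that all of the numerical hypotheses of Propositions \ref{prop:r=1}, \ref{prop: rk1}, and \ref{prop: Ork1} are implied by the single inequality $H^2 > (C_X + l' + 1)^2$ together with the structure of $l'$; in particular one must be slightly careful that after replacing $F$ by $\Cone(g)$ the relevant $l_T$ has gone up by one but $l' = l_Z + l_T$ is unchanged, and that the divisor $D$ produced for the modified $F$ still avoids the original points of interest.
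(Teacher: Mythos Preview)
Your quantitative core has two genuine problems. First, Langer's Bogomolov inequality is stated for torsion-free slope-semistable sheaves, so you cannot apply it to the torsion sheaf $G = \im f$; and even formally, plugging $\Ch_0(G) = 0$ into $\Ch_1^2 - 2\Ch_0\Ch_2 + C_X\Ch_0^2 \ge 0$ yields only $D^2 \ge 0$, not the inequality $D^2 \ge 2\Ch_2(G)$ you use. Second, the assertion that ``the quotient $L\otimes I_Z \twoheadrightarrow G$ being a subobject-complement of $F$ forces $\nu_{1/2,t}(G) \ge \nu_{1/2,t}(L\otimes I_Z)$'' is unjustified: a quotient can perfectly well have smaller slope, and nothing about $G \hookrightarrow F$ repairs this. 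Without one of these two directions, the bound $D\cdot H \le D^2 + C_X + l'$ simply does not follow. (A smaller point: your argument that $G$ is not zero-dimensional misquotes the end of Proposition~\ref{prop:r=1}, which treats the rank-two case $\mc H^{-1}(G)=\ms O_X$; the correct reason here is just that $\Hom(\ms O_p, F)=0$ forbids a zero-dimensional subsheaf of $F$.)

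The paper's proof handles the quantitative step by a dichotomy you have collapsed. Since $\Ch_0(G)=0$, the degree $d_{1/2,t}(G)$ is independent of $t$; one then splits on its sign. If $d_{1/2,t}(G) \le 0$, work at $t_1$ with $2l_Z$ so that $\nu_{1/2,t_1}(L\otimes I_Z)=0$, and apply Bogomolov to the rank-one \emph{kernel} $L\otimes I_Y$ (this is exactly Proposition~\ref{prop: rk1}), obtaining $D\cdot H \le D^2 + C_X + 2l_Z$. If $d_{1/2,t}(G) > 0$, work at $t_2$ with $2l_T$ so that $\nu_{1/2,t_2}(F)=0$, and apply Bogomolov to the rank-one sheaf $\mc H^{-1}(\coker f)$ (as in Proposition~\ref{prop: O}), obtaining $D\cdot H \le D^2 + C_X + 2l_T$. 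Either way Lemma~\ref{lemma:Hodge} then gives $D^2<1$. Finally, to replace $2\max(l_Z,l_T)$ by $l'$ one \emph{must} invoke Propositions~\ref{prop:length1} and~\ref{prop:length2} to redistribute until $l_Z,l_T$ differ by at most one; your parity adjustment alone does not equalize them, so even if the rest were fixed you would only reach $D\cdot H \le D^2 + C_X + 2\max(l_Z,l_T)$, which is weaker than the stated bound whenever $|l_Z - l_T| \ge 2$.
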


\begin{proof}
    Choose $f \ne 0 \in \Hom(L \otimes I_Z, F)$. Then by Proposition \ref{prop:r=1} (whose bound is implied by our bound on $H^2$) we have that $\im F$ is a torsion sheaf, so in particular $c_1(\im f) =: D$ is an effective divisor. (Note that $\im f$ does not contain any points outside of $D$ by the assumption that there is no nontrivial map from a skyscraper sheaf to $F$; in particular, by assumption $f \ne 0$, we have that $D$ is nontrivial.) Note as well that if $D$ did not pass through some point $p \in \supp Z$, then letting $Z' \subsetneq Z$ be the subscheme supported on $\supp Z \setminus p$, we would have a map
    $$L \otimes I_Z \hookrightarrow L \otimes I_{Z'} \twoheadrightarrow \im f$$
    given by $L \otimes I_Z = L \otimes I_{Z'} \twoheadrightarrow \im F$ on $X \setminus p$ and 0 on $X \setminus \supp D$, or in other words, we would get that $f$ is the restriction of some nonzero element of $\Hom(L \otimes I_{Z'}, F)$. Similarly, if $D$ did not pass through some point $p \in \supp \mc H^0(F)$, letting
    $$F' := \ker(F \twoheadrightarrow \mc H^0(F) \twoheadrightarrow \mc H^0(F)|_p)$$
    we see that the composition
    $$\im f \hookrightarrow F \twoheadrightarrow \mc H^0(F)|_p$$
    must be trivial, and thus $\im f \hookrightarrow F$ factors through $F'$, showing that $f$ gives a nonzero element of $\Hom(L \otimes I_Z, F').$

    Now, as before, we note that the Bridgeland degree $d_{1/2,t}(\im f)$ is independent of $t$, as $\Ch_0(\im f) = 0$. So in particular, either $d_{1/2, t}(\im f) \le 0$, in which case $L \otimes I_Z \twoheadrightarrow \im f$ is a destabilizing quotient at $t = \sqrt{\frac{1}{4} - \frac{2l_Z + C_X}{H^2}}$, or $d_{1/2, t}(\im f) > 0$, in which case $\im f \hookrightarrow F$ is a strictly destabilizing subsheaf at $t = \sqrt{\frac{1}{4} - \frac{2l_T + C_X}{H^2}}$. In the first case, we see as in the proof of Proposition \ref{prop: rk1} that we must have
    $$D \cdot H \le D^2 + C_X + 2l_Z$$
    which, combined with the fact that
    $$0 < r_{1/2,t}(\im f) \le r_{1/2,t}(L \otimes I_Z) = t\frac{H^2}{2}$$
    (the rank being nonzero because $\Ch_0(\im f) = 0$ and $H \cdot D > 0$), means that we are in the situation of Lemma \ref{lemma:Hodge}, assuming $H^2 > (C_X + 2l_Z+1)^2$, and thus $D^2 <1$.
    Similarly, in the second case, as in the proof of Proposition \ref{prop: O} (where here $E = \mc H^{-1}(\coker f)$ satisfies $\Ch_0(E) = 1, \Ch_1(E) = D$) we get
    $$D \cdot H \le D^2 + C_X + 2l_T$$
    and thus again are in the situation of Lemma \ref{lemma:Hodge}, assuming $H^2 > (C_X + 2l_T + 1)^2$, and can deduce $D^2 < 1$.
    So we conclude that we must have
    $$D \cdot H \le D^2 + C_X + 2 \max(l_Z, l_T)$$
    with $D^2 < 1$.

    Finally, it remains to note that we can optimize by using Propositions \ref{prop:length1} and \ref{prop:length2} to redistribute points between $Z$ and $\mc H^0(F)$ while preserving $\supp Z \cup \supp \mc H^0(F)$, $l_Z + l_T$, and $D$ (this last point being because the different homomorphisms constructed all agree away from $\supp Z \cup \supp \mc H^0(F)$, which is of codimension 2 and so has no effect on the divisor corresponding to $\im f$).
    In particular, we see that the lower bound for $H^2$ is maximized and the upper bound for $D \cdot H$ is minimized when $l_{Z'}, l_{T'}$ are as close to equal as possible, i.e., either both $\frac{l_Z + l_T}{2}$ or $\{\frac{l_Z + l_T + 1}{2}, \frac{l_Z + l_T - 1}{2}\}$.
\end{proof}

Our desired applications immediately follow by applying the above to $F = \O0[1]$ and $\mc O_X[1]$.


\begin{proof}[Proof of Theorem \ref{thm: DB+RD}]
First of all, by Proposition \ref{prop:derivedtensor} it suffices to show $H^1(X, \omega_X^{DB} \dt L \otimes I_Z)=0.$ By Proposition \ref{prop:restatement}, vanishing of $H^1(X, \omega_X^{DB} \dt L \otimes I_Z)$ is equivalent to vanishing of $\Hom(L \otimes I_Z, \O0[1])$. The conclusion then follows by Theorem \ref{thm: generalvanishing} where $F=\O0[1] $.
\end{proof}
By letting $F=\mc O_X[1]$ in Theorem \ref{thm: generalvanishing} we can similarly deduce Theorem \ref{thm :VanishingOmega}. In particular, the classical Fujita's conjecture for surfaces follows:

\begin{proof}[Proof of Corollary \ref{cor: intro cx=0 fujita}]
    This follows by applying Corollary \ref{cor: abound} with $l_Z = 1$ and 2, respectively. In the first case, we see that for $l_1 = 0, l_2 =1$ we have $\max(2(2l_1 + C_X), C_X + 2l_2 + 1) = 3$, and in the second case, for $l_1 = 1 = l_2$, we have $\max(2(2l_1+C_X), C_X + 2l_2 + 1) = 4$.
\end{proof}

The Reider-type theorem also recovers a version of the classical Reider theorem for very ampleness (differing from \cite{reider1988vector} only in that $L$ is assumed to be ample, not nef):
\begin{corollary}\label{cor: Reidervample}
Let $X$ be a smooth projective normal surface with $C_X = 0$ (e.g. a smooth complex surface) and $L$ an ample line bundle such that $H := c_1(L)$ satisfies $H^2 > 9$. Let $Z$ be a subscheme of length 2. Then if
$$H^0(X, \omega_X \otimes L) \to H^0(X, \omega_X \otimes L \otimes O_Z)$$
is not surjective there exists an effective divisor $D$ passing through $Z$ (i.e., passing through $p$ if $\supp Z = \{p\}$, and if $\supp Z = \{p,q\}$, then $D$ passes through at least one of $p,q$, and both if neither is a base point of $\omega_X \otimes L$) such that
$$\textrm{either } D \cdot H = 1 \textrm{ and } D^2 = 0, -1 \textrm{ or } D \cdot H = 2 \textrm{ and } D^2 = 0.$$
\end{corollary}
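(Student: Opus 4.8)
The plan is to specialize Theorem \ref{thm: Reider} to the case $F = \omega_X[1]$ — wait, rather $F = \mc O_X[1]$ — with $l_Z = 2$, $l_T = 0$, and $C_X = 0$. First I would recall from Proposition \ref{prop:restatement} (applied with $\omega_X^{DB} = \omega_X$, which is the Gorenstein/smooth case) that the non-surjectivity of $H^0(\omega_X \otimes L) \to H^0(\omega_X \otimes L \otimes \mc O_Z)$ — equivalently, via Proposition \ref{prop:derivedtensor} the non-vanishing of $H^1(X, \omega_X \dt L \otimes I_Z)$ — is equivalent to $\Hom(L \otimes I_Z, \mc O_X[1]) \ne 0$. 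Since $\mc O_X[1]$ is of type $O$ (with $l_T = 0$) and $C_X = 0$, the quantity $l'$ in Theorem \ref{thm: Reider} is $l' = 2\lfloor (2+0+1)/2 \rfloor = 2$, and the hypothesis $H^2 > (C_X + l' + 1)^2 = 9$ is exactly our assumption. Hence Theorem \ref{thm: Reider} produces an effective divisor $D$ with $D^2 < 1$ and $0 < D \cdot H \le D^2 + 2$.

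Next I would extract the numerical possibilities. On a smooth surface $D^2 \in \ZZ$, so $D^2 < 1$ means $D^2 \le 0$. Combined with $0 < D \cdot H \le D^2 + 2 \le 2$ and the integrality of $D \cdot H$ (here $H$ is Cartier, so Observation \ref{obs:int}(4) gives $D \cdot H \in \ZZ$), we get $D \cdot H \in \{1, 2\}$. If $D \cdot H = 2$ then $D^2 + 2 \ge 2$ forces $D^2 \ge 0$, hence $D^2 = 0$. If $D \cdot H = 1$, then since $H^2 > 9$ the Hodge index theorem (Observation \ref{obs:int}(3)) gives $D^2 H^2 \le (D \cdot H)^2 = 1$, so $D^2 \le 0$; moreover $D \ne 0$ effective with $H$ ample forces (again by Hodge index, since $D \cdot H \ne 0$) that $D$ is not numerically trivial, and adjunction / the fact that $D$ is effective with $D \cdot H = 1$ small bounds $D^2$ from below — concretely, one rules out $D^2 \le -2$ by noting that an effective divisor with $D \cdot H = 1$ on a surface must be irreducible and reduced (any decomposition $D = D_1 + D_2$ into effective nonzero pieces would force $D_1 \cdot H = 0$ by ampleness, contradiction), and then $D^2 \ge -2$ with $D^2$ even in the $-2$ case would make $D$ a $(-2)$-curve; I would check that a $(-2)$-curve $C$ has $C \cdot H \ge 1$ is consistent but then re-examine whether $D^2 = -2$ actually satisfies $D \cdot H \le D^2 + 2 = 0$, which contradicts $D \cdot H = 1$. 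Thus $D^2 \in \{0, -1\}$ when $D \cdot H = 1$. This yields exactly the three cases in the statement.

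Finally, for the statement about $D$ passing through $Z$: this is the "Moreover" clause of Theorem \ref{thm: Reider}. If $\Hom(L \otimes I_{Z'}, \mc O_X[1]) = 0$ for the length-$1$ subscheme $Z' \subsetneq Z$, then by the theorem $D$ passes through all of $\supp Z \cup \supp \mc H^0(\mc O_X[1]) = \supp Z$, i.e. through both points; and $\Hom(L \otimes I_{Z'}, \mc O_X[1]) = 0$ is exactly the statement that the point in $Z'$ is not a base point of $\omega_X \otimes L$ (via Proposition \ref{prop:restatement} and Proposition \ref{prop:derivedtensor} applied to the length-$1$ scheme $Z'$). If instead one of the points is a base point, one still gets $D$ passing through the other point by applying the argument inside the proof of Theorem \ref{thm: Reider} (the paragraph showing $D$ meets every point of $\supp Z$ unless $f$ extends to a map from $L \otimes I_{Z'}$). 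I expect the main obstacle to be the careful casework ruling out $D^2 = -2$ (and lower) in the $D \cdot H = 1$ case — making precise the claim that such a $D$ is forced to be irreducible and then reconciling the bound $D \cdot H \le D^2 + 2$ with $D \cdot H = 1$ — together with correctly translating the base-point condition into the $\Hom$-vanishing hypothesis of Theorem \ref{thm: Reider}'s "Moreover" clause.
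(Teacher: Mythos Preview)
Your approach is essentially the same as the paper's: translate non-surjectivity into $\Hom(L \otimes I_Z, \ms O_X[1]) \ne 0$ via Propositions \ref{prop:derivedtensor} and \ref{prop:restatement}, apply Theorem \ref{thm: Reider} with $l_Z = 2$, $l_T = 0$, $C_X = 0$, $l' = 2$, and then read off the numerical possibilities using integrality of $D^2$ and $D \cdot H$ on a smooth surface.

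However, you have significantly overcomplicated the numerical casework, and you flag as the ``main obstacle'' something that is in fact a triviality. Once you have $D^2 \in \ZZ$, $D^2 < 1$, and $0 < D \cdot H \le D^2 + 2$, the case $D \cdot H = 1$ gives $D^2 \ge D \cdot H - 2 = -1$ \emph{directly from the inequality you already have}. There is no need for the Hodge index theorem, no need to argue that $D$ is irreducible, and no need to invoke adjunction or $(-2)$-curves. You do eventually notice this (``re-examine whether $D^2 = -2$ actually satisfies $D \cdot H \le D^2 + 2 = 0$''), but only after a detour that suggests you did not see that the bound $D \cdot H \le D^2 + 2$ is exactly what bounds $D^2$ from below. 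The paper's proof simply says ``the statement follows'' after noting integrality.

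Your handling of the support statement is correct in spirit. One small point worth making explicit: to say ``$D$ passes through at least one of $p,q$'' (even when one of them is a base point), you need $\Hom(L, \ms O_X[1]) = 0$, i.e., the case $Z' = \emptyset$. This follows from Theorem \ref{thm: generalvanishing} with $l_Z = l_T = 0$, whose bounds are easily satisfied since $H^2 > 9$ and $H$ is ample and integral. The paper invokes exactly this to ensure $D$ meets $\supp Z$ nontrivially.
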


\begin{proof}
If this is not a surjection, then we must have $H^1(X, \omega_X \otimes L \otimes I_Z) \ne 0$, which is equivalent to $\Hom(L \otimes I_Z, \ms O_X[1]) \ne 0$ by Proposition \ref{prop:restatement}. Thus, we can apply Theorem \ref{thm: Reider} with $l_Z = 2$ and $l_T = C_X = 0$ (thus $l' = 2$) to conclude that there is an effective divisor $D$ such that $D^2 < 1$ and $0 < D \cdot H \le D^2 + 2$. (For the statement about support, note that $H^0(X, \omega_X \otimes L) = 0$ by Theorem \ref{thm: generalvanishing}, so $D$ must at least pass through some point of $Z$.)
Since $X$ was assumed to be smooth, we have $D^2 \in \ZZ$ (and the same, of course, is true for $D \cdot H$), so the statement follows.
\end{proof}

\printbibliography

@article{langer2024bridgeland,
  title={Bridgeland stability conditions on normal surfaces},
  author={Langer, Adrian},
  journal={Annali di Matematica Pura ed Applicata (1923-)},
  pages={1--12},
  year={2024},
  publisher={Springer}}

@article{arcara2009reider,
  title={Reider's theorem and Thaddeus pairs revisited},
  author={Arcara, Daniele and Bertram, Aaron},
booktitle={Grassmannians, moduli spaces and vector bundles},
series={Clay Math. Proc.},
volume={14},
pages={51--68},
publisher={Amer. Math. Soc.},
year={2011}
}

@article{langer2022intersection,
  title={Intersection theory and Chern classes on normal varieties},
  author={Langer, Adrian},
  journal={arXiv preprint arXiv:2210.08766},
  year={2022}
}

@book{fulton2013intersection,
  title={Intersection theory},
  author={Fulton, William},
  volume={2},
  year={2013},
  publisher={Springer Science \& Business Media}
}

@misc{stacks-project,
    shorthand    = {Stacks},
    author       = {The {Stacks Project Authors}},
    title        = {\textit{Stacks Project}},
    howpublished = {\url{https://stacks.math.columbia.edu}},
    year         = {2018},
  }

@inproceedings{fujita1988contribution,
  title={Contribution to birational geometry of algebraic varieties: open problems},
  author={Fujita, Takao},
  booktitle={the 23rd International Symposium, Division of Mathematics, the Taniguchi Foundation; August 22-27, 1988, Katata},
  year={1988}
}

@article{reider1988vector,
  title={Vector bundles of rank 2 and linear systems on algebraic surfaces},
  author={Reider, Igor},
  journal={Annals of Mathematics},
  volume={127},
  number={2},
  pages={309--316},
  year={1988},
  publisher={JSTOR}
}

@article{ein1993global,
  title={Global generation of pluricanonical and adjoint linear series on smooth projective threefolds},
  author={Ein, Lawrence and Lazarsfeld, Robert},
  journal={Journal of the American Mathematical Society},
  volume={6},
  number={4},
  pages={875--903},
  year={1993},
  publisher={JSTOR}
}

@article{kawamata1995fujita,
  title={On Fujita's freeness conjecture for 3-folds and 4-folds},
  author={Kawamata, Yujiro},
  journal={arXiv preprint alg-geom/9510004},
  year={1995}
}

@article{ye2020fujita,
  title={On Fujita's freeness conjecture in dimension 5},
  author={Ye, Fei and Zhu, Zhixian},
  journal={Advances in Mathematics},
  volume={371},
  pages={107210},
  year={2020},
  publisher={Elsevier}
}

@inproceedings{sakai2006reider,
  title={Reider-Serrano's method on normal surfaces},
  author={Sakai, Fumio},
  booktitle={Algebraic Geometry: Proceedings of the International Conference held in L’Aquila, Italy, May 30--June 4, 1988},
  pages={301--319},
  year={2006},
  organization={Springer}
}

@inproceedings{kovacs2016dubois,
  title={Du Bois singularities deform},
  author={Kov{\'a}cs, S{\'a}ndor and Schwede, Karl},
  booktitle={Minimal Models and Extremal Rays (Kyoto, 2011)},
  pages={49--65},
  year={2016},
  organization={Math. Soc. Japan}
}

@article{ein1995global,
  title={Global Generation Of Linear Series On Terminal Threefolds},
  author={Ein, Lawrence and Lazarsfeld, Robert and Ma{\c{s}}ek, Vladimir},
  journal={International Journal of Mathematics},
  volume={6},
  number={01},
  pages={1--18},
  year={1995},
  publisher={World Scientific}
}

@article{langer1998note,
  title={A note on k-jet ampleness on surfaces},
  author={Langer, Adrian},
  journal={arXiv preprint math/9802098},
  year={1998}
}

@article{shen2023k,
  title={On $ k $-Du Bois and $ k $-rational singularities},
  author={Shen, Wanchun and Venkatesh, Sridhar and Vo, Anh Duc},
  journal={arXiv preprint arXiv:2306.03977},
  year={2023}
}

@article{koseki2023bogomolov,
  title={On the Bogomolov--Gieseker inequality in positive characteristic},
  author={Koseki, Naoki},
  journal={International Mathematics Research Notices},
  volume={2023},
  number={24},
  pages={20784--20811},
  year={2023},
  publisher={Oxford University Press}
}

@article{du1981complexe,
  title={Complexe de de Rham filtr{\'e} d'une vari{\'e}t{\'e} singuli{\`e}re},
  author={Du Bois, Philippe},
  journal={Bulletin de la Soci{\'e}t{\'e} math{\'e}matique de France},
  volume={109},
  pages={41--81},
  year={1981}
}

@book{guillen2006hyperresolutions,
  title={Hyperr{\'e}solutions cubiques et descente cohomologique},
  author={Guillen, Francisco and Aznar, Vincente Navarro and Pascual-Gainza, Pedro and Puerta, Fernando},
  volume={1335},
  year={2006},
  publisher={Springer}
}

@book{peters2008mixed,
  title={Mixed hodge structures},
  author={Peters, Chris AM and Steenbrink, Joseph HM},
  volume={52},
  year={2008},
  publisher={Springer Science \& Business Media}
}

@article{saito2000mixed,
  title={Mixed Hodge complexes on algebraic varieties},
  author={Saito, Morihiko},
  journal={Mathematische Annalen},
  volume={316},
  pages={283--331},
  year={2000},
  publisher={Springer}
}

@article{schwede2009f,
  title={F-injective singularities are Du Bois},
  author={Schwede, Karl},
  journal={American journal of mathematics},
  volume={131},
  number={2},
  pages={445--473},
  year={2009},
  publisher={Johns Hopkins University Press}
}

@article{nuer2023bogomolov,
  title={The Bogomolov-Gieseker-Koseki inequality on surfaces with canonical singularities in arbitrary characteristic},
  author={Nuer, Howard and Sorani, Alan},
  journal={arXiv preprint arXiv:2308.03307},
  year={2023}
}

@article{gu2022counterexamples,
  title={Counterexamples to Fujita's conjecture on surfaces in positive characteristic},
  author={Gu, Yi and Zhang, Lei and Zhang, Yongming},
  journal={Advances in Mathematics},
  volume={400},
  pages={108271},
  year={2022},
  publisher={Elsevier}
}
\Addresses

\end{document}